\DeclareMathOperator{\sign}{sign}
\DeclareMathOperator{\sinc}{sinc}
\newtheorem{theorem}{Theorem}[section]
\newtheorem{lemma}[theorem]{Lemma}
\newtheorem{corollary}[theorem]{Corollary}
\newtheorem{proposition}[theorem]{Proposition}
\theoremstyle{definition}
\theoremstyle{remark}
\newtheorem{remark}[theorem]{Remark}
\numberwithin{equation}{section}
\newcommand{\K}{\mathcal{U}}
\newcommand{\dd}{\mathrm{d}}
\newcommand{\Proj}{\mathcal{S}}
\begin{document}

\title{On the approximation of Koopman spectra of measure-preserving flows}

%    Information for first author
\author{N. Govindarajan}
%    Address of record for the research reported here
\address{Dept. of Mechanical Eng., Univerity of California at Santa Barbara, Santa Barbara, CA 93106}
\email{ngovindarajan@engineering.ucsb.edu}
%    \thanks will become a 1st page footnote.
\thanks{The authors gratefully acknowledge support from the Army Research Office (ARO) through grant W911NF- 11-1-0511 under the direction of program manager Dr. Samuel Stanton.}

%    Information for second author
\author{R. Mohr}
\address{Dept. of Mechanical Eng., Univerity of California at Santa Barbara, Santa Barbara, CA 93106}
\email{mohrrm@engineering.ucsb.edu}

%    Information for third author
\author{S. Chandrasekaran}
\address{Dept. of Electrical and Computer Eng., University of California at Santa Barbara, Santa Barbara, CA 93106}
\email{shiv@ece.ucsb.edu}

%    Information for fourth author
\author{I. Mezic}
\address{Dept. of Mechanical Eng., Univerity of California at Santa Barbara, Santa Barbara, CA 93106}
\email{mezic@engineering.ucsb.edu}

%    General info
\subjclass[2010]{47A58, 37M25}

\date{\today.}

\keywords{Koopman operator, measure-preserving automorphisms, periodic approximations, unitary operators on Hilbert space, spectral measure.}

\begin{abstract}
The method of using periodic approximations to compute the spectral decomposition of the Koopman operator is generalized to the class of measure-preserving flows on compact metric spaces. It is shown that the spectral decomposition of the continuous one-parameter unitary group can be approximated from an intermediate time discretization of the flow. A sufficient condition is established between the time-discretization of the flow and the spatial discretization of the periodic approximation, so that weak convergence of spectra will occur in the limit. This condition effectively translates to the requirement that the spatial refinements must occur at a faster pace than the temporal refinements. This result is contrasted with the well-known CLF condition of finite difference schemes for advection equations.  Numerical results of spectral computations are shown for some benchmark examples of volume-preserving flows. 
\end{abstract}

\maketitle

\section{Introduction}

When it comes to the modeling of complex dynamical systems, the ability to construct models that closely follow the individual trajectories of the original system is an inherently difficult proposition. This difficulty, which arises due to exponential sensitivity of initial conditions, makes it also nearly impossible to correctly simulate long-term trajectories of a generic system whatsoever. 

Instead, a modeling philosophy in \cite{stuart1994numerical,Mezic2004} advocates precise computation of spectral objects in dynamical systems - such as invariant sets that are characterized by indicator functions in the eigenspace at $0$ of the associated Koopman operator. In many applications, it is sufficient if a reduced-order model of a system is able to correctly capture the global invariant and quasiperiodic structures that are directly detectable within the resolution of the observables. 

A mathematical formalism which takes on this dynamics of observables approach is given by Koopman operator theory \cite{Budisic2012}. The spectral decomposition of the Koopman operator provides a means of extracting low-dimensional models of observable dynamics which are capable of mimicing the relevant statistical properties. For evolutions on the attractor, the discrete parts of the Koopman spectra describe the almost periodic part of the process, while the remaining part of the process is described by the continuous spectra \cite{Mezic2005}. In a situation where discrete part of the spectrum dominates the spectral measure of an observable, a finite truncation of the spectra can provide an accurate description to the underlying process.

Paramount to the development of Koopman-based reduced order models are numerical methods that approximate the spectral decomposition of observables. In a previous paper \cite{govindarajan2018approximation}, we introduced a framework to compute the spectral decomposition of the unitary Koopman operator using the concept of ``periodic approximation'' introduced by Halmos, Katok, and Lax  \cite{Halmos1944,Katok1967,Lax1971}.  The emphasis in  \cite{govindarajan2018approximation}  was on the discrete-time case, i.e. measure-preserving maps. Here, the framework is extend to handle measure-preserving flows. In this case we have to deal with a \emph{continuous} one-parameter unitary group over the reals, instead of a discrete group over the integers.

Using the infinitesimal generator formalism, the time evolution of an observable is the solution of an advection equation. The periodic approximation we introduce is hence a numerical solution to this partial differential equation. In comparison to other methods of approximation (e.g. finite difference or semi-Lagrangian methods) \cite{bouche2003comparison,mclachlan1999area}, our proposed method does not suffer from the need to deal with instabilities or artificial damping caused by the scheme. Since the flow is directly discretized in a manner such that the measure-preserving properties are preserved, the associated Koopman operator of the discretization remains unitary, which in turn maintains the spectra on the imaginary axis, where also the spectra of the underlying infinite dimensional operator lies.. We show that our method is spectrally convergent in a weak sense. Although spectral isomorphism achieved in the limit is a weaker notion than topological conjugacy (see e.g. \cite{redei2012history} for a historical overview on the subject), it remains a prerequisite which in many cases may be sufficient for applications.

\subsection{Problem formulation} \label{sec:probdescription}
Let $S^t: X \mapsto X$ denote a Lipschitz continuous flow on a compact \emph{norm-induced} metric space $X\subseteq \mathbb{R}^m$, with $S^t$ satisfying the well-known group properties: $S^t\circ S^s(x) = S^{t+s}(x)$ for any $t,s\in \mathbb{R}$, and $S^0(x) = x$. Associate with $X$ the measure space $(X, \mathcal{M}, \mu)$, where $\mathcal{M}$ denotes the Borel sigma-algebra, and $\mu$ is an absolutely continuous measure with  full support on the state-space, i.e. $\mathrm{supp}\mbox{ }\mu =  X$. The flow $S^t$ is assumed to be invariant with respect to the measure $\mu$, i.e. for every $t\in \mathbb{R}$ and $B\in\mathcal{M}$: $\mu(B) = \mu(S^t(B))$.  

The Koopman linearization \cite{Koopman1931} of a measure-preserving flow is performed as follows. Let:
$$ L^2(X, \mathcal{M}, \mu) :=  \left\{ g:X\mapsto \mathbb{C}  \quad | \quad  \left\| g \right\| < \infty \right\}, \qquad \left\| g \right\| := \left(\int_{X} |g(x)|^2 \dd\mu(x) \right)^{\frac{1}{2}}  $$ 
denote the space of square-integrable functions on $X$ with respect to the invariant measure $\mu$. The Koopman \emph{continuous} one-parameter group $\left\{ \K^t: L^2(X, \mathcal{M}, \mu) \mapsto  L^2(X, \mathcal{M}, \mu)  \right\}_{t \in \mathbb{R}}$ is defined as the family of composition operators:
\begin{equation}
	(\K^t g)(x) :=    g \circ S^t(x), \qquad t\in\mathbb{R}. \label{eq:KOOPMAN}
\end{equation}
Since $S^t$ is an invertible measure-preserving transformation for every $t\in \mathbb{R}$, the family of operators $\{\K^t\}_{t\in \mathbb{R}}$ forms a continuous one-parameter \emph{unitary} group. In other words, \eqref{eq:KOOPMAN} is an unitary operator for every fixed $t\in \mathbb{R}$ and satisfies the group properties: $\K^t \K^s = \K^{t+s}$ for $t,s\in \mathbb{R}$, and $\K^0 = I$. As per the spectral theorem for continuous one-parameter unitary groups\cite{Akhiezer1963}, the evolution of an oberservable $g\in L^2(\mathcal{X},\mathcal{M},\mu)$ under \eqref{eq:KOOPMAN} can be decomposed as: 
\begin{equation} 
	\K^t g = \int_{\mathbb{R}} e^{i \omega t} \dd\Proj_{\omega} g, \qquad t\in \mathbb{R}.  \label{eq:decomposition}
\end{equation}
Here, $\Proj_{\omega}$ denotes a self-adjoint, \emph{projection-valued measure} on the Borel sigma-algebra $\mathcal{B}(\mathbb{R})$ of the real line $\mathbb{R}$. The {projection-valued measure} satifies the following properties:
\begin{enumerate}[(i)]
	\item For every $D \in \mathcal{B}$, 
	$$ \Proj_D := \int_{D} \dd\Proj_{\theta}$$
	is an orthogonal projector on  $L^2(X, \mathcal{M}, \mu)$.
	\item $\Proj_D = 0$ if $D=\emptyset$ and  $\Proj_D = I$ if $D= \mathbb{R}$.
	\item If $D_1, D_2 \in \mathcal{B} $ and $D_1 \cap D_2 = \emptyset$, then
	$$ \left\langle \Proj_{D_1} g   ,  \Proj_{D_2} h \right\rangle := \int_X \left(\Proj_{D_1} g\right)^*(x)  \left(\Proj_{D_2} h\right)(x)   \dd\mu(x) =  0$$
	for every $g,h\in L^2(X, \mathcal{M}, \mu)$.
	\item If $\left\{ D_k \right\}^{\infty}_{k=1}$ is a sequence of pairwise disjoint sets in $\mathcal{B}$, then
	$$ \lim_{m\rightarrow\infty} \sum_{k=1}^m  \Proj_{D_k} g =  \Proj_{D} g, \qquad D:= \bigcup_{k=1}^{\infty}  D_k   $$
	for every $g\in L^2(X, \mathcal{M}, \mu)$.
\end{enumerate}
Analogous to the discrete-time case \cite{govindarajan2018approximation}, the goal of this paper is to find an approximation to the spectral projection:
\begin{equation}
	\Proj_D g := \int_{D} \dd \Proj_{\omega} g   \label{eq:target}   
\end{equation}
for some given observable $g\in L^2(X,\mathcal{M},\mu)$ and  interval $D  \subset \mathbb{R}$. In addition, we would like to obtain an  approximating to the \emph{spectral density function}. That is, if $\varphi(\omega)\in\mathcal{D}(\mathbb{R})$ belongs to the space of smooth test functions  (i.e. Schwarz space), we wish to find $\rho(\omega;g)\in \mathcal{D}^{*}(\mathbb{R})$ in the dual space of distributions defined as the distributional derivative:
\begin{equation}
\int_{\mathbb{R}} \varphi'(\omega) c(\omega; g)  \dd\omega =  - \int_{\mathbb{R}} \varphi(\omega) \rho(\omega;g)  \dd\omega   \label{eq:spectraldensity}
\end{equation}
of the so-called spectral cumulative function on $\mathbb{R}$:
$$ c(\omega; g) :=  \langle \Proj_{(-\infty,\omega)} g , g \rangle.$$

\subsection{Main contributions} Our main contribution is the generalization of the procedure in \cite{govindarajan2018approximation} to compute the spectral decomposition of the Koopman operator for measure-preserving flows. The main technical result, which we prove here, is an asymptotic relation between the spatial and temporal discretization of the flow so that the spectra is computed correctly in the limit. This asymptotic relationship effectively states that the refinements in the spatial grid must occur at a faster rate than the temporal grid. Remarkably, the condition is, in some sense, opposite to the well-known Courant-Lewy-Friedrich (CLF) condition \cite{courant1928partiellen} which is needed for the stability of finite difference schemes.

\subsection{Related work}
Within Computional Fluid Dynamics (CFD) community, there is already some awareness on the concept of periodic approximation \cite{mclachlan1999area}. The method can be interpreted as a semi-Lagrangian method, however global efforts are made to prevent two grid points from collapsing into one.  Our work is also closely related to the development of symplectic lattice maps \cite{scovel1991symplectic}, where the goal was to preserve the structural properties of a symplectic integrator under finite machine precision through the use of integer arithmetic.

\subsection{Paper organization} Section~\ref{sec:mainresult} outlines the proposed discretization of the  Koopman operator family. Section~\ref{sec:periodicapprx} discusses the periodic approximation of measure-preserving flows, where we establish the asymptotic relation between the spatial and temporal discretization of the flow. In sections~\ref{sec:opconvergence} and \ref{sec:spectralconv}, we prove operator convergence and weak spectral convergence of the proposed scheme. Numerical examples are shown in section~\ref{sec:examples}, followed by conclusions in section~\ref{sec:conclusions}.

\section{The proposed discretization of the Koopman operator family} \label{sec:mainresult}
The discretization of the Koopman operator family can be broken-down into two steps: a temporal discretization and a spatial discretization.  

\subsection{The temporal discretization} Let $\left\{\tau(n)\right\}^{\infty}_{n=1}\subset\mathbb{R}^+$ denote a monotonically decreasing sequence converging to zero. The first step in the discretization process is to convert the flow to an automorphism by considering $\tau(n)$-map $S^{\tau(n)}: X \mapsto X$. By doing so, we obtain the discrete one-parameter group: $\{ S^{k \tau(n)} \}_{k\in \mathbb{Z}}$ along with its Koopman linearization $\left\{ \K^{k \tau(n) }: L^2(X, \mathcal{M}, \mu) \mapsto  L^2(X, \mathcal{M}, \mu)  \right\}_{k \in \mathbb{Z}}$, where:
\begin{equation} 
(\K^{k \tau(n) } g)(x) = g \circ S^{k \tau({n})}(x), \qquad   k\in\mathbb{Z}. \label{eq:dtkoopman}
\end{equation}
According to the spectral theorem \cite{Akhiezer1963}, \eqref{eq:dtkoopman} admits the decomposition:
\begin{equation}
\K^{k \tau(n) } g = \int_{\mathbb{S}} e^{i k \theta } \dd\hat{\Proj}^{\tau(n)}_{\theta} g, \qquad k\in \mathbb{Z}.   \label{eq:specdtcircle}
\end{equation}
Here, $\hat{\Proj}^{\tau(n)}_{\theta}$ is a self-adjoint, {projection-valued measure} on the Borel sigma-algebra  of the circle $\mathcal{B}(\mathbb{S})$, parameterized by $\theta\in[-\pi,\pi)$. The projection-valued measure on the circle can be mapped on the real line by introducing ${\Proj}^{\tau(n)}_{\omega}$ such that:
$$ {\Proj}^{\tau(n)}_{\omega}  = \hat{\Proj}^{\tau(n)}_{\theta}, \qquad \mbox{whenever } \theta = \left( \tau(n) \{\omega \}\right) \cap  [-\pi,\pi).$$
 By doing so, 
 \eqref{eq:specdtcircle} can be rewritten as:
\begin{equation} \K^{k \tau(n) } g = \int_{\mathbb{R}} e^{i  k \omega \tau(n) } \dd{\Proj}^{\tau(n)}_{\omega} g =  \int^{\hat{\omega}(n)}_{-\hat{\omega}(n)} e^{i  k \omega \tau(n) } \dd{\Proj}^{\tau(n)}_{\omega} g, \qquad k\in \mathbb{Z}.   \label{eq:specdtreals}
\end{equation}
where $\hat{\omega}(n)$ denotes the \emph{spectral bandwith}:
\begin{equation}
\hat{\omega}(n) = {\pi}/{\tau(n)}.
\end{equation}
For any interval $D=[a,b)\subset \left[-\hat{\omega}(n),  \hat{\omega}(n) \right)$ contained within the spectral bandwith, consider the spectral projection:
\begin{equation}
\Proj^{\tau(n)}_{D} g := \int_{D} \dd \Proj^{\tau(n)}_{\omega} g.   \label{eq:targetdt}   
\end{equation}
By comparison of \eqref{eq:specdtreals} with \eqref{eq:decomposition} and using the fact that $e^{i\theta} = e^{i\theta + 2\pi}$, the following relationship between \eqref{eq:targetdt}  and \eqref{eq:target} can be established:
\begin{equation} 
\Proj^{\tau(n)}_{D} g = \Proj_{D_n} g , \qquad D_n =   \bigcup_{l\in \mathbb{Z}}  D^{(l)}_n,\quad   D^{(l)}_n=\left[a + 2l\hat{\omega}(n) , b +  2 l \hat{\omega}(n) \right).  \label{eq:aliasing}
\end{equation}
The equality \eqref{eq:aliasing} is a consequence of \emph{aliasing}. Hence, to approximate the spectral projection \eqref{eq:target}    through the $\tau(n)$-map $S^{\tau(n)}$ will involve taking into account the errors introduced by the sets $D^{(l)}_n$ for $l\ne 0$. 

In practical calculations, this implies that $\tau(n)$ has to be chosen small enough, so that the spectral bandwith of the selected observables are suffciently captured. Consequently, observables with high frequency spectral content will require small time-steps. The conditions on the time discretization are very similar to the \emph{Nyquist-Shannon} sampling theorem, which imposes restrictions on the sample rate of a continuous-time signal so that it can be properly reconstructed.

\begin{figure}[h!]
	\begin{center}
		\includegraphics[width=.95\textwidth]{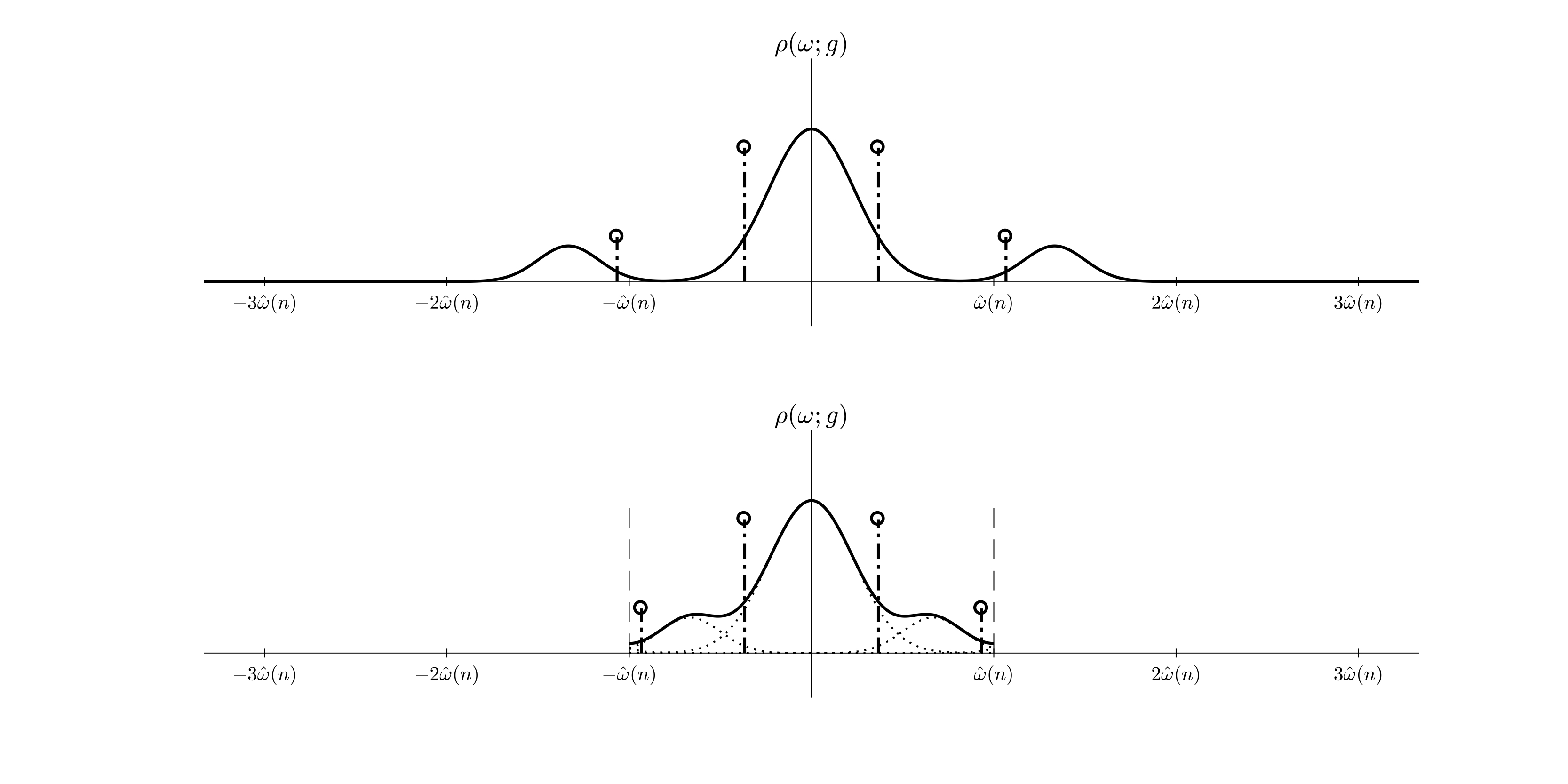} \\
	\end{center} 
	\caption{The change that the spectral density function undergoes due to aliasing \eqref{eq:aliasing}. The top figure shows the original density plot, the bottom figure shows the density plot after the time discretization.} \label{fig:aliasing}
\end{figure}

\subsection{The spatial discretization} Let $\left\{\mathcal{P}_n\right\}^{\infty}_{n=1}$ with $\mathcal{P}_n := \left\{ p_{n,1}, p_{n,2}, \ldots,  p_{n,q(n)} \right\}$ be a sequence of measurable partitions satisfying the properties:
\begin{enumerate}[(i)]
	\item Every partition element $p_{n,j}$ is compact, connected, and of equal measure, i.e.
	\begin{equation}
	\mu(p_{n,j}) = \frac{\mu(X)}{q(n)}, \qquad j\in\left\{1,2,\ldots,q(n)\right\} \label{eq:equalsized} 
	\end{equation}
	where $q:\mathbb{N} \mapsto \mathbb{N}$ is a strictly, monotonically increasing function.
	\item The diameters of the partition elements are bounded by 
	\begin{equation} \mathrm{diam}(p_{n,j}):= \sup_{x,y\in p_{n,j}} d( x, y) \leq l(n)  \label{eq:partitioncond}
	\end{equation}
	where $l:\mathbb{N} \mapsto \mathbb{R}$ is a positive, monotonic function decaying to zero in the limit.
	\item $\mathcal{P}_n$ is a refinement of $\mathcal{P}_m$ for $n>m$. That is, every $p_{m,j} \in \mathcal{P}_m$ is the union of some partition elements in $\mathcal{P}_n$.
\end{enumerate}
The second step in the discretization process is to convert the  map $S^{\tau(n)}: X \mapsto X$ on a ``continuous'' state-space to a periodic map $S^{\tau(n)}_{n}:\mathcal{P}_n \mapsto \mathcal{P}_n$ with periodicity $\zeta(n)$ on a ``discrete and finite'' state-space. The spatial discretization is performed in exactly the same manner as the original paper \cite{govindarajan2018approximation}.

At first, a discrete representation of the observable $g\in L^2(X,\mathcal{M},\mu)$ is obtained by projecting the observable $g\in L^2(X,\mathcal{M},\mu)$ onto the  finite-dimensional subspace of indicator functions,
$$  L^2_n(X, \mathcal{M}, \mu) :=  \left\{ g_n : X\mapsto \mathbb{C}  \quad | \quad  \sum_{j=1}^{q(n)} c_j  \chi_{p_{n,j}}(x), \quad c_j\in\mathbb{C}   \right\},  \qquad  \chi_{p_{n,j}}(x)  = \begin{cases} 1 & x \in p_{n,j}  \\ 0  & x \notin p_{n,j} \end{cases} $$
by means of a smoothing/averaging operation:
\begin{equation} 
(\mathcal{W}_n g)(x) = g_{n}(x) := \sum_{j=1}^{q(n)} g_{n,j}\chi_{p_{n,j}}(x), \qquad g_{n,j} = \frac{q(n)}{\mu(X)}\int_{X} g(x) \chi_{p_{n,j}}(x) \dd\mu(x). \label{eq:averageoperator}
\end{equation}
Then,  \eqref{eq:KOOPMAN} is replaced by the finite group $\left\{ \K^{k\tau(n)}_n: L^2_n(X, \mathcal{M}, \mu) \mapsto L^2_n(X, \mathcal{M}, \mu) \right\}_{k\in \mathbb{Z} / \zeta(n)}  $ defined by the permutation operators:
\begin{equation} 
\left(\K^{k\tau(n)}_n g_n\right)(x) := \sum_{j=1}^{q(n)} g_{n,j} \chi_{S^{-k\tau(n)}_n(p_{n,j})}(x), \qquad k\in \mathbb{Z} / \zeta(n).  \label{eq:discreteoperator}
\end{equation}
The eigenfunctions of $\eqref{eq:discreteoperator}$ can be expressed in terms of the basis elements of $L^2_n(X, \mathcal{M}, \mu)$, i.e. 
$$ v_{n,k} (x)= \sum_{j=1}^{q(n)} v_{n,kj}  \chi_{p_{n,j}}(x) $$
where we set  $\left\| v_{n,k} \right\| = 1$. The associated eigenvalues of $\eqref{eq:discreteoperator}$ are roots of unity: 
$$ \K^{k\tau(n)}_n v_{n,k} = e^{i\omega_{n, k} {k\tau(n)}} v_{n,k}, \qquad \omega_{n, k}:=\theta_{n, k} / \tau(n).  $$
Henceforth, the spectral decomposition of an observable $g_{n}\in L^2_n(X, \mathcal{M}, \mu)$ under the action of  \eqref{eq:discreteoperator} can be expressed as:
%  = \begin{bmatrix} v_{n,1} & \cdots & v_{n,q(n)} \end{bmatrix} \begin{bmatrix} e^{ik\tau(n)\omega_{n,1}} & & \\ & \ddots & \\ & & e^{ik\tau(n)\omega_{n,q(n)}} \end{bmatrix} \begin{bmatrix} v^*_{n,1} \\ \vdots \\ v^*_{n,q(n)} \end{bmatrix} g_n
\begin{equation} \K^{k\tau(n)}_n g_n = \sum^{q(n)}_{k=1}  e^{ik\tau(n)\omega_{n,k}} \Proj^{\tau(n)}_{n, \omega_{n, k}} g_n \label{eq:eigdecomp},
\end{equation}
where $\Proj^{\tau(n)}_{n, \omega_{n, k}}: L^2_n(X,\mathcal{M},\mu) \mapsto  L^2_n(X,\mathcal{M},\mu)$ denotes the rank-1 self-adjoint projector:
\begin{equation}  \Proj^{\tau(n)}_{n, \omega_{n, k}} g_n =  v_{n,k} \left\langle v_{n,k}, g_n  
\right\rangle  =  v_{n,k} \left(\int_X v^*_{n,k}(x) g_n(x) \dd \mu \right) =  v_{n,k} \left(  \frac{\mu(X)}{q(n)} \sum_{j=1}^{q(n)}   v^*_{n,kj} g_{n,j}\right).
\end{equation}
Let $D=[a,b]\subset \left[-\hat{\omega}(n),  \hat{\omega}(n) \right)$. The fully discrete analogue to the spectral projection \eqref{eq:target} (both in time and space) is defined as:
\begin{equation}
\Proj^{\tau(n)}_{n, D} g_n :=  \int_{D}  \dd\Proj^{\tau(n)}_{n,\omega} g_n = \displaystyle \sum_{\omega_{n,k} \in D}  \Proj^{\tau(n)}_{n, \omega_{n, k}} g_n. \label{eq:targetapprx}
\end{equation}
Additionally, the discrete analogue of the spectral density function \eqref{eq:spectraldensity} is given by:
\begin{equation}   
\rho_n(\omega;g_n) =  \sum_{k=1}^{q(n)} \left\| \Proj^{\tau(n)}_{n, \omega_{n, k}} g_n \right\|^2 \delta(\omega- \omega_{n,k}). \label{eq:discrete}
\end{equation}
\begin{remark}
	The periodic approximation $S^{\tau(n)}_{n}: \mathcal{P}_n \mapsto \mathcal{P}_n$ already inherits the measure-preserving properties of the original flow  on the subsigma algebra generated by $\mathcal{P}_n$, i.e.  $\mu(S^{-\tau(n)} (p_{n,j})) = \mu(p_{n,j}) = \mu(S^{\tau(n)} (p_{n,j}))$. However, if the underlying flow comes from a Hamiltonian vector field, additional constraints may be imposed so that the periodic approximation also preserves the  \emph{symplectic form}.
\end{remark}
\subsection{Overview} The discretization of the Koopman operator of a measure preserving flow is split into stages. In the time-discretization, a continuous one-parameter group is replaced by a discrete one-parameter group:
$$ \left\{ \K^t: L^2(X, \mathcal{M}, \mu) \mapsto  L^2(X, \mathcal{M}, \mu)  \right\}_{t \in \mathbb{R}} \rightarrow \left\{ \K^{k \tau(n) }: L^2(X, \mathcal{M}, \mu) \mapsto  L^2(X, \mathcal{M}, \mu)  \right\}_{k \in \mathbb{Z}}. $$
In the spatial discretization, the discrete one-parameter group is replaced by a finite one-parameter group:
$$ \left\{ \K^{k \tau(n) }: L^2(X, \mathcal{M}, \mu) \mapsto  L^2(X, \mathcal{M}, \mu)  \right\}_{k \in \mathbb{Z}} \rightarrow \left\{ \K^{k\tau(n)}_n: L^2_n(X, \mathcal{M}, \mu) \mapsto L^2_n(X, \mathcal{M}, \mu) \right\}_{k\in \mathbb{Z} / \zeta(n)}.  $$
An overview of the discretization process is given in \cref{fig:discretization}.

\begin{figure}[h!]
	\begin{center}
		\includegraphics[width=.82\textwidth]{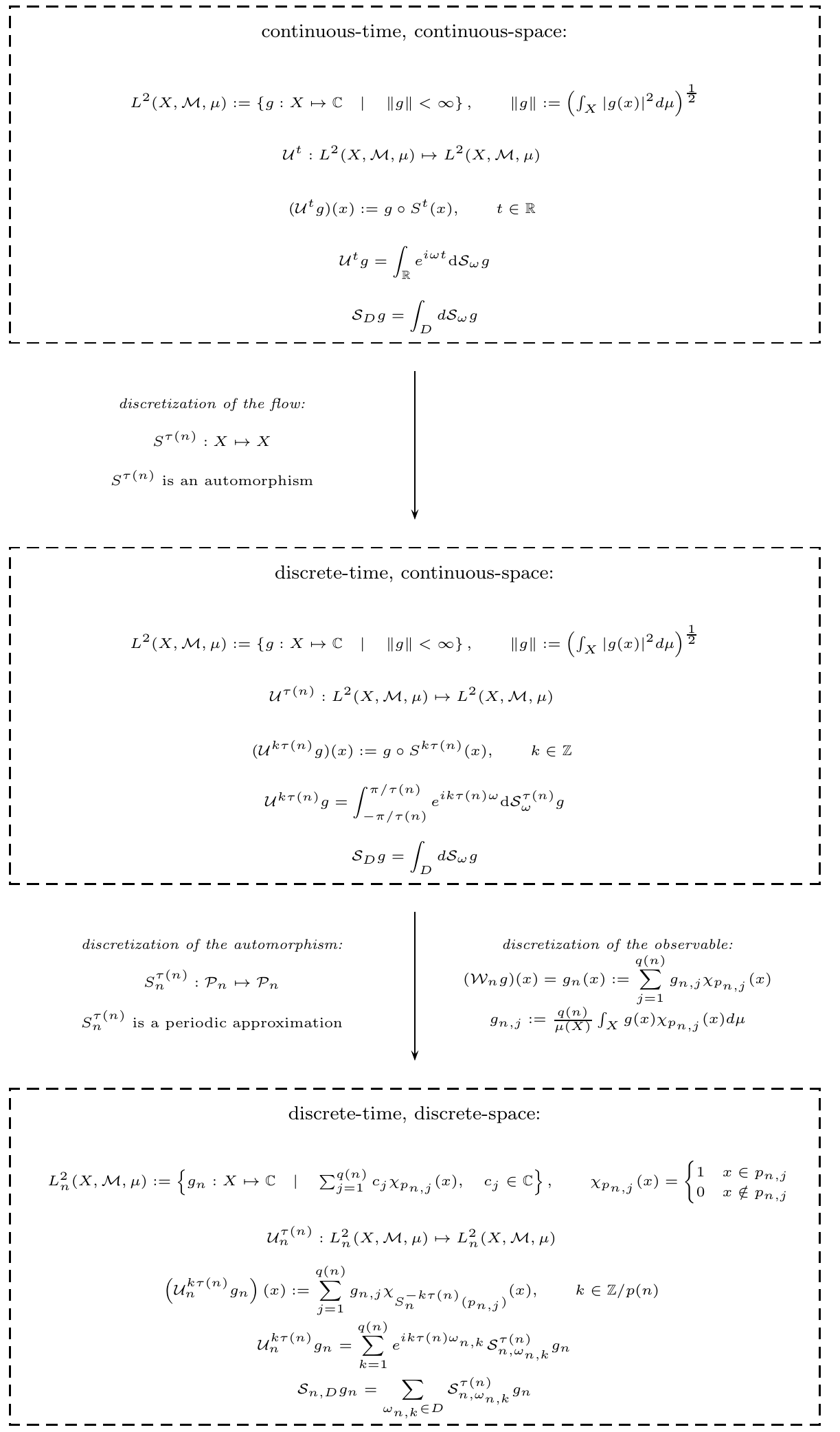} \\
	\end{center} 
	\caption{An overview of the discretization process.} \label{fig:discretization}
\end{figure}

\section{Periodic approximations of flows} \label{sec:periodicapprx}

In \cite{govindarajan2018approximation} we showed that for a measure-preserving automorphism $T:X\mapsto X$, one could construct a sequence of periodic approximations $\{T_n:\mathcal{P}_n \mapsto \mathcal{P}_n \}_{n=1}^{\infty}$ such that the dynamics $T$ is closely mimiced for longer periods of time after each consequtive refinement. More specifically, given a compact set $A\in \mathcal{M}$ and $k\in \mathbb{N}$, we showed that the set evolution of $A$ converges in the Haussdorf metric in the following sense:
\begin{equation} 
	\lim_{n \rightarrow \infty} \sum^{k}_{l = -k}  d_H(T^l(A), T^l_n(A_n))  = 0,  \label{eq:setconvergence}
\end{equation}
where:
$$ A_n := \bigcup_{p\in \mathcal{P}_n:\mbox{ } p \cap A \ne \emptyset } p \qquad \mbox{and} \qquad d_H(A,B) := \max \left\{ \sup_{a\in A} \inf_{b\in B} d(a,b),  \sup_{b\in B} \inf_{a\in A} d(a,b) \right\}.$$

An analogous statement along the lines of \eqref{eq:setconvergence} can also made for measure-preserving flows. In this section, we will show that it is possible to construct a sequence of periodic approximations $\{ S^{\tau(n)}_{n}:\mathcal{P}_n \mapsto \mathcal{P}_n \}^{\infty}_{n=1}$ to the flow  $S^t:X\mapsto X$, such that for every $t \in \mathbb{R}$:
\begin{equation}  \lim_{n\rightarrow \infty} \int^t_{-t} d_H\left( S^{s}( A ) ,  S^{\xi_{n}(s)}_{n}( A_n )   \right)  \dd s = 0,
\label{eq:flowconvergence}
\end{equation}
where:
\begin{equation}
\xi_{n}(t) = \sign (t) \left\lceil \frac{|t|}{\tau(n)}\right\rceil \tau(n). \label{stieltjes}
\end{equation}

In order for \eqref{eq:flowconvergence} to hold true, certain conditions on the spatial and temporal discretizations need to be satisfied. Recall that $S^{\tau(n)}_{n}:\mathcal{P}_n \mapsto \mathcal{P}_n$ is defined as a periodic approximation of the automorphism $S^{\tau(n)}: X \mapsto X$. But since $S^{\tau(n)}$ tends to the identity map as $n\rightarrow \infty$, it is necessary that the refinements in $\{ \mathcal{P}_n \}_{n=1}^{\infty}$ happen at a sufficiently fast rate, so that the dynamics of the flow are properly captured.

\subsection{Example: translational flow on the circle} \label{sec:example} To illustrate this technicality, we will start off with a simple example of a translation flow on the circle:
$$ S^t(x) =  (x + \Omega t)  \mod{1},\qquad \Omega\in \mathbb{R},\qquad x\in [0,1).$$
Suppose the following temporal and spatial discretizations are chosen for the flow:
$$ \tau(n) = \frac{\gamma}{\Omega w^n}, \qquad \mathcal{P}_{n} =\left\{p_{n, 1}, p_{n, 2}, \ldots, p_{n, r^n} \right\}\mbox{ with }p_{n, j} = \left(\frac{j-1}{r^n} ,\frac{j}{r^n} \right),$$
where $w,r>1$ are integers and $\gamma>0$ a positive real constant. It is not hard to derive that the mapping\footnote{$\lfloor \cdot \rceil$ denotes nearest integer function, where we choose to always round upwards for half-integers.}:
\begin{equation}
S^{\tau(n)}_{n}(p_{n, j}) =  p_{n, j^*}, \qquad j^* =  \left\lfloor  \left( j-1 + \gamma \left(\frac{r}{w}\right)^n \right) \mod r^n   \right\rceil + 1, \qquad n=1,2,\ldots,   \label{eq:example}
\end{equation}
forms a sequence of ``optimal'' periodic approximations that minimizes the cost:
$$J = \max_{p_{n, j} \in \mathcal{P}_n}d_H( S^{\tau(n)}(p_{n, j}) , S^{\tau(n)}_{n}(p_{n, j}) ).$$
The sequence of maps \eqref{eq:example} can be interpreted as an exact discretization of the flow: 
\begin{equation} \hat{S}^t_n (x) = (x + \hat{\Omega}(n) t)  \mod{1}, \qquad  \hat{\Omega}(n) = {\left\lfloor \gamma  \left(\frac{r}{w}\right)^n \right\rceil}  \frac{\Omega}{\gamma} \left(\frac{w}{r}\right)^n. 
\end{equation}
That is, 
$$d_H( \hat{S}^{\xi_n(t)}_n(p_{n, j}) , S^{\xi_n(t)}_{n}(p_{n, j}) ) = 0, \qquad t\in \mathbb{R}.$$
Whether \eqref{eq:example} satisfy the convergence criteria \eqref{eq:flowconvergence} depends on the ratio of $\tfrac{r}{w}$. Overall, we may distinguish three different situations:
\begin{itemize}
\item \emph{Case $\tfrac{r}{w}<1$.} In this scenario, the temporal discretization is refined faster than the spatial discretization. Notice that $S^{\tau(n)}_{n}$ is matched to the identity map as soon as $n\in\mathbb{N}$ is large enough to make $ \gamma \left(\tfrac{r}{w}\right)^n < \frac{1}{2}$.
Henceforth, $\displaystyle \lim_{n\rightarrow\infty }\hat{\Omega}(n) = 0$ and convergence in the sense of \eqref{eq:flowconvergence} will not occur. 
\item \emph{Case $\tfrac{r}{w}=1$.} In this scenario, the temporal discretization is refined at the same rate as the spatial discretization. The (normalized) frequency mismatch is equal to:
$$ \lim_{n\rightarrow \infty }\left| \frac{\Omega- \hat{\Omega}(n)}{\Omega}   \right|  = 1-  \frac{ {\left\lfloor \gamma   \right\rceil} }{\gamma} .$$

\item \emph{Case $\tfrac{r}{w}>1$.}	In this scenario, the spatial discretization is refined faster than the temporal discretization. In this situation,
$$ \lim_{n\rightarrow \infty }\left| \frac{\Omega- \hat{\Omega}(n)}{\Omega}   \right| = \lim_{n\rightarrow \infty } 1 - {\left\lfloor \gamma  \left(\frac{r}{w}\right)^n \right\rceil}  \frac{1}{\gamma} \left(\frac{w}{r}\right)^n   = 0.$$
Hence, convergence in the sense of \eqref{eq:flowconvergence} must occur.
\end{itemize}
Since the Koopman operator converges spectrally only when $\lim_{n\rightarrow\infty}\hat{\Omega}(n)=\hat{\Omega}$, the conditione $\tfrac{r}{w}>1$ is critical in applications of spectral computations.

\subsection{The general case: {an asymptotic requirement on the temporal and spatial discretizations}} \label{sec:flowassumptions}
The analysis of the translation flow on the circle is a specific case of a more general phenomenon that needs to be adressed in the discretization of flows. We will be using the following technical result which is a consequence of Gr{\"o}nwall inequality.

\begin{lemma} \label{lemma:gronwall}Let $S^t$ denote a lipschitz continuous flow on a compact metric space $X$. Then for some $L>0$,
	\begin{equation} d \left( S^t(x) , S^t(y) \right)\leq e^{L|t|}  d(x,y), \qquad t\in \mathbb{R} \mbox{ and }x,y\in X. 
	\end{equation}
\end{lemma}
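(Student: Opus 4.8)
The plan is to derive this as a direct consequence of the classical Gr\"onwall inequality applied to the integral form of the flow equation. Since $X \subseteq \mathbb{R}^m$ is a compact, norm-induced metric space and $S^t$ is a Lipschitz continuous flow, the flow is generated (at least in the Carath\'eodory/a.e. sense) by a vector field $F$, i.e. $\tfrac{\dd}{\dd t} S^t(x) = F(S^t(x))$, and the Lipschitz continuity of $S^t$ in $t$ combined with compactness of $X$ forces $F$ to be bounded; the content we actually need, however, is that $S^t(x)$ depends Lipschitz-continuously on the spatial argument uniformly on compact time intervals, with a constant $L$ that serves as the exponential rate.

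First I would fix $t \geq 0$ (the case $t < 0$ follows by applying the result to the time-reversed flow $S^{-t}$, which is again a Lipschitz flow on $X$, so the same $L$ works after taking $|t|$). Writing the trajectories in integral form,
\begin{equation}
S^t(x) - S^t(y) = (x - y) + \int_0^t \big( F(S^s(x)) - F(S^s(y)) \big)\, \dd s,
\end{equation}
I would take norms, use the triangle inequality, and invoke the (local, hence on the compact $X$ global) Lipschitz constant $L$ of $F$ to bound $\| F(S^s(x)) - F(S^s(y)) \| \leq L\, \| S^s(x) - S^s(y) \|$. Setting $\phi(t) := d(S^t(x), S^t(y)) = \| S^t(x) - S^t(y) \|$, this yields the integral inequality $\phi(t) \leq \phi(0) + L \int_0^t \phi(s)\, \dd s$. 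Then the integral form of Gr\"onwall's inequality gives $\phi(t) \leq \phi(0) e^{Lt} = d(x,y) e^{Lt}$, which is the claim for $t \geq 0$; combining with the $t<0$ case gives $d(S^t(x), S^t(y)) \leq e^{L|t|} d(x,y)$ for all $t \in \mathbb{R}$.

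The one genuine subtlety — and the step I expect to need the most care — is that the statement only assumes $S^t$ is a \emph{Lipschitz continuous flow}, not that it is the flow of a $C^1$ vector field, so one must be slightly careful about in what sense the generating vector field exists and is Lipschitz. The cleanest route is probably to avoid the generator entirely: use the semigroup property $S^{t+h} = S^h \circ S^t$ together with Lipschitz continuity in time to show that $\phi(t) = d(S^t(x), S^t(y))$ is itself a Lipschitz (hence absolutely continuous) function of $t$, then bound its upper Dini derivative $D^+\phi(t)$ by $L\,\phi(t)$ using the one-step Lipschitz-in-space estimate $d(S^h(a), S^h(b)) \leq (1 + Lh + o(h)) d(a,b)$ as $h \to 0^+$, and finally apply the differential form of Gr\"onwall to the absolutely continuous function $\phi$. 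Either way the mechanism is the same exponential comparison; the only thing to get right is justifying the infinitesimal Lipschitz-in-space contraction/expansion estimate from the standing hypotheses, after which the Gr\"onwall step is routine.
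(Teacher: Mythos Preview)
Your proposal is correct, and your Route~2 is essentially the paper's argument in different packaging. Both avoid the generator and start from the one-step estimate $d(S^h(a),S^h(b)) \leq (1+L|h|)\,d(a,b)$ coming from Lipschitz continuity and compactness; the only difference is how the exponential is extracted. The paper does it by bare hands: iterate the one-step bound $n$ times over subintervals of length $|s|/n$ to get $(1+L|s|/n)^n$, then let $n\to\infty$. You instead pass to the Dini derivative $D^+\phi \leq L\phi$ and invoke Gr\"onwall. These are the same mechanism; the paper's version is just a self-contained discrete Gr\"onwall, which has the minor advantage of not needing absolute continuity of $\phi$ or any measure-theoretic care.

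Your Route~1, via the generating vector field and the integral equation, is a genuinely different path. It is the textbook ODE argument and is perfectly valid once you know the flow comes from a Lipschitz vector field on the compact set $X$; the paper sidesteps this entirely and works purely at the level of the flow map, which is arguably cleaner given that the hypothesis is stated for $S^t$ rather than for a generator. Either route is fine here.
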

\begin{proof}
	By compactness and Lipschitz continuity of the flow, we have that $d \left( S^{t+s}(x), S^{t}(x)\right) \leq \frac{L}{2} |s|$ and $d \left( S^{t+s}(y), S^{t}(y)\right) \leq \frac{L}{2} |s|$ for some constant $L>0$. By combining these facts, we may deduce that:
	 $$ d \left( S^{t+s}(x) , S^{t+s}(y) \right) \leq (1+L |s|)  d \left( S^t(x) , S^t(y) \right).$$
	  By repeated application of this inequality, one can show that for any $n\in\mathbb{N}$:
	$$ d \left( S^{t+s}(x) , S^{t+s}(y) \right) \leq \left(1+\frac{L |s|}{n}\right)^n  d \left( S^t(x) , S^t(y) \right).  $$
	Taking limits as $n\rightarrow \infty$, we obtain $d \left( S^{t+s}(x) , S^{t+s}(y) \right) \leq \exp(L |s|)d \left( S^t(x) , S^t(y) \right)$.
\end{proof}
We have the following theorem.
\begin{theorem} \label{thm:mainflow}
Let $S^t: X \mapsto X$ be a measure-preserving flow on a compact metric space preserving the absolutely continuous measure $\mu$ with support $\mathrm{supp}\mbox{ }\mu =  X$. Recall from \eqref{eq:partitioncond} that $\mathrm{diam}(p_{n,j}) \leq l(n)$. If:
\begin{equation} 
\lim_{n\rightarrow \infty}  \frac{ l(n)}{\tau(n)} = 0,  \label{eq:convergecondition}
\end{equation}
then there exists a sequence of periodic approximation
$\{ S^{\tau(n)}_{n}:\mathcal{P}_n \mapsto \mathcal{P}_n \}^{\infty}_{n=1}$ such that  \eqref{eq:flowconvergence} holds:
$$  \lim_{n\rightarrow \infty} \int^t_{-t} d_H\left( S^{s}( A ) ,  S^{\xi_{n}(s)}_{n}( A_n )   \right)  \dd s= 0
$$ 
for every fixed $t \in \mathbb{R}$ and compact set $A$.
\end{theorem}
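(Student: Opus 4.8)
The plan is to build the periodic approximations explicitly and then estimate the integrand in \eqref{eq:flowconvergence} by combining three error contributions: the error from replacing the flow $S^s$ by the time-sampled flow $S^{\xi_n(s)}$, the error from replacing the time-$\tau(n)$ map by its optimal periodic approximation $S^{\tau(n)}_n$, and the error from replacing $A$ by its partition hull $A_n$. First I would recall from \cite{govindarajan2018approximation} that for a fixed measure-preserving automorphism one can construct a periodic approximation on $\mathcal{P}_n$ whose distance from the automorphism on each partition element is controlled; applied to $T = S^{\tau(n)}$ this gives a bound of the form $d_H(S^{\tau(n)}(p_{n,j}), S^{\tau(n)}_n(p_{n,j})) \leq C\, l(n)$ for some constant independent of $n$, essentially because moving a point to the nearest available partition cell costs at most a diameter $l(n)$ and the equal-measure/refinement hypotheses guarantee a valid permutation exists (a Hall/marriage-type or direct geometric argument). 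This is the step I expect to be the main obstacle: making precise that the ``optimal'' periodic approximation exists and that its per-step error is $O(l(n))$ uniformly, rather than just $O(1)$; the equal-measure condition \eqref{eq:equalsized} and the nesting condition are what make this work, and I would lean on the construction already carried out in the discrete-time paper.

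Next I would iterate the one-step error using \cref{lemma:gronwall}. Writing $k = \lceil |s|/\tau(n)\rceil$ so that $\xi_n(s) = \sign(s)\, k\, \tau(n)$, a telescoping argument comparing $S^{k\tau(n)}$ with $(S^{\tau(n)}_n)^{k}$ gives
\begin{equation}
d_H\!\left( S^{\xi_n(s)}(A_n),\, S^{\xi_n(s)}_n(A_n) \right) \;\leq\; \sum_{j=0}^{k-1} e^{L j \tau(n)}\, C\, l(n) \;\leq\; C\, l(n)\, \frac{e^{L|s|} }{\,\tau(n)\,} \cdot \frac{\tau(n)}{e^{L\tau(n)}-1}\cdot(e^{L\tau(n)}-1),
\end{equation}
which after simplification is bounded by $C' e^{L|s|}\, l(n)/\tau(n)$ on the interval $|s|\le t$. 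Here the key point is that the number of steps $k$ grows like $1/\tau(n)$, so each per-step error $l(n)$ must be divided by $\tau(n)$ — this is exactly where hypothesis \eqref{eq:convergecondition}, $l(n)/\tau(n)\to 0$, enters, and it is the quantitative form of the ``spatial refinement must be faster than temporal refinement'' principle illustrated by the circle example.

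The remaining two error terms are comparatively routine. For the sampling error, Lipschitz continuity of the flow in time gives $d_H(S^s(A), S^{\xi_n(s)}(A)) \leq \frac{L}{2}|\xi_n(s) - s| \leq \frac{L}{2}\tau(n) \to 0$ uniformly on $[-t,t]$. For the hull error, $A \subseteq A_n$ and every point of $A_n$ is within $l(n)$ of $A$, so $d_H(A, A_n) \leq l(n)$; applying \cref{lemma:gronwall} to the flow (and the same iterated bound to $S^{\xi_n(s)}_n$, whose ``Lipschitz constant'' is controlled because it tracks $S^{\tau(n)}$ up to the $O(l(n))$ per-step error already accounted for) propagates this to $d_H(S^s(A), S^s(A_n)) \leq e^{L|s|} l(n)$. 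Finally I would assemble the pieces via the triangle inequality for $d_H$,
\begin{equation}
d_H\!\left( S^s(A),\, S^{\xi_n(s)}_n(A_n)\right) \;\leq\; \frac{L}{2}\tau(n) \;+\; e^{L|s|} l(n) \;+\; C' e^{L|s|}\,\frac{l(n)}{\tau(n)},
\end{equation}
and integrate over $s\in[-t,t]$: the first term contributes $\leq L t\, \tau(n)$, the second $\leq C_t\, l(n)$, and the third $\leq C_t\, l(n)/\tau(n)$, all of which vanish as $n\to\infty$ by $\tau(n)\to 0$ and \eqref{eq:convergecondition}. Dominated convergence (the integrand is bounded uniformly in $n$ on the compact interval by an integrable function of $s$) then yields \eqref{eq:flowconvergence}. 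I would note that one could even get pointwise-in-$s$ convergence of the integrand for a.e.\ $s$, but the integral statement is cleaner and is all that is needed downstream.
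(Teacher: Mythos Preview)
Your proposal is correct and follows essentially the same route as the paper: both decompose the error via the triangle inequality into a time-sampling term (handled by continuity of the flow), a hull term $d_H(S^{\xi_n(s)}(A),S^{\xi_n(s)}(A_n))$ (handled by Gr\"onwall and $d_H(A,A_n)\le l(n)$), and a periodic-approximation term, then construct $S^{\tau(n)}_n$ by bipartite matching to get a one-step error $O(l(n))$ and telescope with \cref{lemma:gronwall} so that the accumulated error is controlled by $l(n)/\tau(n)$. The only cosmetic differences are that the paper first reduces $A_n$ to individual partition cells $p$ before telescoping and packages the resulting geometric sum in Riemann--Stieltjes form, whereas you bound the sum directly; neither changes the substance of the argument.
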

The asymptotic condition \eqref{eq:convergecondition} is  \emph{sufficient} for convergence to occur, although it is not necessary as we have seen in the example shown in \cref{sec:example}. Similar to the construction in \cite{govindarajan2018approximation}, a candidate for the periodic approximation $S^{\tau(n)}_n$ is obtained from a maximum cardinality matching of a bipartite graph.
\begin{proof}[Proof of \cref{thm:mainflow}]
By the triangle inequality:	
$$   \int^t_{-t} d_H\left( S^{s}( A ) ,  S^{\xi_{n}(s)}_{n}( A_n )   \right)  \dd s \leq  \int^t_{-t} d_H\left( S^{s}( A ) ,  S^{\xi_{n}(s)}_{n}( A )   \right)  \dd s + \int^t_{-t} d_H\left( S^{\xi_{n}(s)}( A ) ,  S^{\xi_{n}(s)}_{n}( A_n )   \right)  \dd s
$$ 
Since the first term will tend to zero because of continuity of the flow $S^t$, it suffices to show that:
$$  \lim_{n\rightarrow \infty} \int^t_{-t} d_H\left( S^{\xi_{n}(s)}( A ) ,  S^{\xi_{n}(s)}_{n}( A_n )   \right)  \dd s = 0.
	$$	
The following can be further established:
\begin{eqnarray*}
	\int^t_{-t} d_H\left( S^{\xi_{n}(s)}( A ) ,  S^{\xi_{n}(s)}_{n}( A_n )   \right)  \dd s  & \leq &  \epsilon_n(t)  + \int^{t}_{-t} d_H\left( S^{\xi_{n}(s)}( A_n ) ,  S^{\xi_{n}(s)}_{n}( A_n )   \right)  \dd s     \\
	& =  &   \epsilon_n(t)  +   \int^{t}_{-t}   d_H\left( S^{\xi_{n}(s)}( A_n ) ,  S^{\xi_{n}(s)}_{n}( A_n )   \right)   \dd s \\
	& =  &   \epsilon_n(t)  +   \int^{t}_{-t}  d_H\left( \bigcup_{p\in P_n:\mbox{ }p \cap A \ne \emptyset }  S^{\xi_{n}(s)} (p)   ,  \bigcup_{p\in P_n:\mbox{ }p \cap A \ne \emptyset }  S^{\xi_{n}(s)}_{n}( p )   \right)  \dd s \\
	& \leq &  \epsilon_n(t)  +   \int^{t}_{-t} \max_{p\in P_n:\mbox{ }p \cap A \ne \emptyset }   d_H\left( S^{\xi_{n}(s)}( p) ,  S^{\xi_{n}(s)}_{n}( p )   \right)  \dd s, \\
	& \leq &  \epsilon_n(t)  +   \int^{t}_{-t} \max_{p\in P_n }   d_H\left( S^{\xi_{n}(s)}( p) ,  S^{\xi_{n}(s)}_{n}( p )   \right)  \dd s,
\end{eqnarray*}
where:
$$\epsilon_n(t) :=  \int^t_{-t} d_H\left( S^{\xi_{n}(s)}( A ) ,  S^{\xi_{n}(s)}( A_n )   \right)  \dd s \qquad\mbox{and}\qquad \lim_{n\rightarrow\infty} \epsilon_n(t) = 0. $$	
Let $\{S^{\tau(n)}_n: \mathcal{P}_n \mapsto \mathcal{P}_n \}_{n=1}^{\infty}$ denote a sequence of periodic approximations generated from a maximum cardinality matching of the bipartite graphs\footnote{According to lemma 3.2 in \cite{govindarajan2018approximation}, such a sequence will always exist given that the maximum cardinality matchings turn out to be perfect matchings.}:
\begin{equation} G_n =(\mathcal{P}_{n}, \mathcal{P}^{'}_{n} , E ), \qquad   (p_{n,k}, {p'}_{n,l}) \in E \quad\mbox{if}\quad\mu( S^{\tau(n)}(p_{n, k}) \cap p_{n, l}) > 0.  \label{eq:graphmatching} 
\end{equation} 
To establish our result, we will proceed by showing that $\{S^{\tau(n)}_n\}_{n=1}^{\infty}$ satisfies:
$$\lim_{n\rightarrow\infty} \int^{t}_{-t} \max_{p\in P_n }   d_H\left( S^{\xi_{n}(s)}( p) ,  S^{\xi_{n}(s)}_{n}( p )   \right) = 0. $$
First of all, observe that for any $p\in \mathcal{P}_n$, we have the bound:
\begin{eqnarray*} 
	d_H \left( S^{\tau(n) } (p) ,  S^{\tau(n) }_n (p) \right)  & \leq & \mathrm{diam }\mbox{ } ( S^{\tau(n)} (p)) +  \mathrm{diam }\mbox{ }(S^{\tau(n)}_n (p))  \\
	& \leq & (e^{L \tau(n)} +1 ) l(n)   \\
	& \leq & (e^{L \tau(n)} +1 ) \frac{l(n)}{\tau(n)}e^{L \tau(n)}\tau(n). 
\end{eqnarray*} 
By employing the inequality: 
\begin{eqnarray*} d_H \left( S^{ l \tau(n)} (p) ,  S^{l \tau(n)}_n (p) \right)  \leq d_H \left( S^{\tau(n) } \left(S^{ (l-1)\tau(n) } (p)  \right) ,  S^{\tau(n) } \left(S^{(l-1)\tau(n)}_n (p)  \right) \right) + \\
	 d_H \left( S^{\tau(n) } \left(S^{(l-1)\tau(n)}_n (p)  \right) ,  S^{ \tau(n)}_n \left(S^{(l-1)\tau(n)}_n (p)  \right) \right), \qquad l\in \mathbb{N}, 
\end{eqnarray*}
and using \cref{lemma:gronwall}: $d_H(S^t(A),S^t(B)) \leq e^{L|t|} d_H(A,B) $, the following recursive relation can be derived:
$$ d_H \left( S^{ l \tau(n)} (p) ,  S^{l \tau(n)}_n (p) \right) \leq e^{L \tau(n)}  d_H \left( S^{ (l-1) \tau(n)} (p) ,  S^{(l-1) \tau(n)}_n (p) \right) + (e^{L \tau(n)} +1 ) \frac{l(n)}{\tau(n)}e^{L \tau(n)}\tau(n). $$
% \begin{eqnarray*}
% d_H(S^t(A),S^t(B))  & := & \max \left\{ \sup_{a\in A} \inf_{b\in B} d(S^t(a),S^t(b)),  \sup_{b\in B} \inf_{a\in A} d(S^t(a),S^t(b)) \right\} % \\
%          &  \leq  &  \max \left\{ \sup_{a\in A} \inf_{b\in B} e^{L|t|}d(a,b),  \sup_{b\in B} \inf_{a\in A} e^{L|t|}d(a,b) \right\} \\
%          &  =  & e^{L|t|} \max \left\{ \sup_{a\in A} \inf_{b\in B} d(a,b),  \sup_{b\in B} \inf_{a\in A} d(a,b) \right\}  \\
%          & = & e^{L|t|} d_H(A,B)
% \end{eqnarray*}
$$    d_H \left( S^{ l \tau(n)} (p) ,  S^{l \tau(n)} (p) \right)  \leq   (e^{L \tau(n)} +1 ) \frac{l(n)}{\tau(n)} \left( e^{L \tau(n)}   +  \ldots + e^{L l\tau(n)} \right).  $$
Letting $s>0$, this may be conveniently be re-expressed into a Riemann-Stieltjes notation:
$$ d_H\left( S^{\xi_{n}(s)}( p) ,  S^{\xi_{n}(s)}_{n}( p )   \right)   (e^{L \tau(n)} +1) \frac{l(n)}{\tau(n)} \int^{\xi_{n}(s)}_{0}  e^{L | \xi_n(\sigma)|} \dd \sigma .  $$
Following an identical procedure for negative time values, we obtain a similar bound:
$$ d_H\left( S^{\xi_{n}(-s)}( p) ,  S^{\xi_{n}(-s)}_{n}( p )   \right)  \leq (e^{L \tau(n)} +1) \frac{l(n)}{\tau(n)}  \int_{\xi_{n}(-s)}^{0}  e^{L  |\xi_n(\sigma)| } \dd \sigma. $$
Combining the results yields:
$$ \int^{t}_{-t} \max_{p\in P_n }   d_H\left( S^{\xi_{n}(s)}( p) ,  S^{\xi_{n}(s)}_{n}( p )   \right)  
	 \leq  \left(e^{L \tau(n)} +1\right) \left( \frac{l(n)}{\tau(n)}\right)  \left(  \int^{t}_{-t}   \int^{\xi_{n}(s)}_{\xi_{n}(-s)} e^{L | \xi_n(\sigma)|} \dd \sigma    \dd s \right).
$$
Indeed, by taking limits:
$$ \lim_{n\rightarrow\infty} \int^{t}_{-t} \max_{p\in P_n }   d_H\left( S^{\xi_{n}(s)}( p) ,  S^{\xi_{n}(s)}_{n}( p )   \right)  
\leq  \left(2\right) \left( \lim_{n\rightarrow\infty} \frac{l(n)}{\tau(n)}\right)  \left( \frac{4(-1 + e^{L t} - L t)}{L^2} \right),
$$
and noting the condition \eqref{eq:convergecondition}, we have established what needs to be shown.
\end{proof}
\begin{remark}
In practice, $S^{\tau(n)}:X \mapsto X$ is typically not known explicitly.  Instead, one has access to an order-$s$ integrator $\tilde{S}^{\tau(n)}:X \mapsto X$ which acts as an approximator to the flow. Assuming that the integrator preserves the invariant measure of the flow, the question arises whether replacing $S^{\tau(n)}_n:\mathcal{P}_n \mapsto \mathcal{P}_n$ with  $\tilde{S}^{\tau(n)}_n:\mathcal{P}_n \mapsto \mathcal{P}_n$ would still allow \cref{thm:mainflow} to fall through. The answer to this question is affirmative. This follows from the fact that $d(S^{\tau(n)}(x), \tilde{S}^{\tau(n)}(x)) = \mathcal{O}({\tau^{s+1}(n)})$ and the triangle inequality: 
$$
d_H (S^{\tau(n)} (p), \tilde{S}^{\tau(n)}_n (p) ) \leq  d_H (S^{\tau(n)} (p), \tilde{S}^{\tau(n)} (p) ) + d_H (\tilde{S}^{\tau(n)} (p) , \tilde{S}^{\tau(n)}_n (p) ). 
$$
\end{remark}
\begin{remark}
Notice that the condition on the spatial and temporal discretizations \eqref{eq:convergecondition} is different than, and in some sense opposite the Courant-Friedrichs-Lewy (CFL) condition which typically arises in finite difference schemes of Hyperbolic PDEs. In \cref{sec:advection}, we revisit this matter in greater detail.
\end{remark}

\section{Operator convergence} \label{sec:opconvergence}

In this section we establish operator convergence for a convergencing sequence of periodic approximations to a flow. Since the proofs are very similar to the discrete-time case \cite{govindarajan2018approximation}, most details of the proof are left out.
\begin{lemma} \label{lemm:indicator}
Suppose that $\left\{S^{\tau(n)}_{n}: \mathcal{P}_{n} \mapsto \mathcal{P}_{n} \right\}^{\infty}_{n=1}$ is a sequence of discrete maps that periodically approximates $S^t:X\mapsto X$  in the sense of  \cref{thm:mainflow}.   For some $m\in\mathbb{N}$, define: 
	$$g = \sum_{j=1}^{q(m)} c_{j} \chi_{p_{m,j}} \in  L^2_m(X,\mathcal{M},\mu). $$
	Then, for any fixed $t\in \mathbb{R}$:
$$ \lim_{n\rightarrow \infty}   \int^t_{-t} \left\| \K^{s} g - \K^{\xi_{n}(s)}_n g_n \right\|^2  \dd s = 0.$$	
\end{lemma}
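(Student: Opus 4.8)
The plan is to rewrite both evolved observables in terms of the images of the finitely many cells of the \emph{fixed} partition $\mathcal{P}_m$ and then upgrade the Hausdorff set‑convergence behind \cref{thm:mainflow} to $L^2$‑convergence, exploiting measure‑preservation. Since $\mathcal{P}_n$ refines $\mathcal{P}_m$ for $n\ge m$, the averaging operator $\mathcal{W}_n$ is the orthogonal projection of $L^2(X,\mathcal{M},\mu)$ onto $L^2_n\supseteq L^2_m$ and therefore fixes $g$; hence $g_n=g$ for all $n\ge m$, and it suffices to treat such $n$. Writing $I_l:=\{i:p_{n,i}\subseteq p_{m,l}\}$, one reads off $\K^s g=\sum_{l=1}^{q(m)}c_l\,\chi_{S^{-s}(p_{m,l})}$ and, from \eqref{eq:discreteoperator}, $\K^{\xi_n(s)}_n g_n=\sum_{l=1}^{q(m)}c_l\,\chi_{B^n_l(s)}$ with $B^n_l(s):=\bigcup_{i\in I_l}S^{-\xi_n(s)}_n(p_{n,i})$. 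For every $n$ and $s$ the families $\{S^{-s}(p_{m,l})\}_l$ and $\{B^n_l(s)\}_l$ are, up to $\mu$‑null sets, partitions of $X$ into $q(m)$ pieces of equal measure $\mu(X)/q(m)$ (for the second family because $S^{\tau(n)}_n$ permutes the cells of $\mathcal{P}_n$). Since the integrand of $\|\K^s g-\K^{\xi_n(s)}_n g_n\|^2$ vanishes wherever the two cell‑assignments coincide and is bounded by $4M^2$ elsewhere, with $M:=\max_l|c_l|$, this gives
\[
\bigl\|\K^s g-\K^{\xi_n(s)}_n g_n\bigr\|^2\;\le\;2M^2\sum_{l=1}^{q(m)}\mu\bigl(S^{-s}(p_{m,l})\,\triangle\,B^n_l(s)\bigr),
\]
and, crucially, the number of terms $q(m)$ does not depend on $n$.

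It therefore suffices to prove $\int_{-t}^{t}\mu\bigl(S^{-s}(p_{m,l})\triangle B^n_l(s)\bigr)\,\dd s\to 0$ for each fixed $l$; since the integrand is bounded on $[-t,t]$ by $2\mu(X)/q(m)$, dominated convergence reduces this to pointwise‑in‑$s$ convergence. For that I would invoke the elementary fact: if $B_n\to C$ in the Hausdorff metric, $C$ is compact, and $\mu(B_n)=\mu(C)$ for all $n$, then $\mu(B_n\triangle C)\to 0$. Indeed, with $\delta_n:=d_H(B_n,C)\to 0$ and $C_\delta$ the closed $\delta$‑neighbourhood, $B_n\subseteq C_{\delta_n}$ gives $\mu(B_n\setminus C)\le\mu(C_{\delta_n})-\mu(C)$; the equality $\mu(B_n)=\mu(C)$ forces $\mu(C\setminus B_n)=\mu(B_n\setminus C)$; and $\bigcap_{\delta>0}C_\delta=C$ with $\mu(X)<\infty$ yields $\mu(C_{\delta_n})\to\mu(C)$ by continuity from above. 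I apply this with $C=S^{-s}(p_{m,l})$, which is compact and has measure $\mu(p_{m,l})=\mu(X)/q(m)=\mu(B^n_l(s))$ by measure‑preservation, and with $B_n=B^n_l(s)$.

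It then remains to establish $d_H\bigl(B^n_l(s),S^{-s}(p_{m,l})\bigr)\to 0$, uniformly for $s\in[-t,t]$, which I split by the triangle inequality through $S^{-\xi_n(s)}(p_{m,l})$. The term $d_H\bigl(S^{-\xi_n(s)}(p_{m,l}),S^{-s}(p_{m,l})\bigr)$ is at most $\sup_{x\in p_{m,l}}d\bigl(S^{-\xi_n(s)}(x),S^{-s}(x)\bigr)\le\tfrac{L}{2}\,|\xi_n(s)-s|\le\tfrac{L}{2}\,\tau(n)\to0$, by Lipschitz continuity of the flow in time (as in the proof of \cref{lemma:gronwall}) together with $|\xi_n(s)-s|\le\tau(n)$. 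The term $d_H\bigl(S^{-\xi_n(s)}(p_{m,l}),B^n_l(s)\bigr)$, using $d_H(\bigcup_i U_i,\bigcup_i V_i)\le\max_i d_H(U_i,V_i)$ over $i\in I_l$, is at most $\max_{p\in\mathcal{P}_n}d_H\bigl(S^{-\xi_n(s)}(p),S^{-\xi_n(s)}_n(p)\bigr)$, and the estimates in the proof of \cref{thm:mainflow} bound this by $C_t\,l(n)/\tau(n)$ for $s\in[-t,t]$ (with $C_t$ depending only on $t$ and $L$), which tends to $0$ by \eqref{eq:convergecondition}. Combining the displayed $L^2$ estimate with this Hausdorff fact, applying dominated convergence in $s$, and summing over the $q(m)$ cells gives the claim.

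The step I expect to be the genuine obstacle is exactly the passage from Hausdorff convergence of the evolved sets to convergence of their symmetric differences: Hausdorff closeness by itself does not control $\mu(A\triangle B)$, since two equal‑measure unions of many tiny cells can be Hausdorff‑close and yet have large symmetric difference, so the argument must use both the exact equality of measures provided by measure‑preservation and the compactness of the common limit $S^{-s}(p_{m,l})$. The accompanying bookkeeping point is to expand $g$ over the fixed partition $\mathcal{P}_m$, keeping only $q(m)$ error terms rather than $q(n)\to\infty$; expanding over $\mathcal{P}_n$ instead seems to require a uniform boundary‑layer estimate not guaranteed by \eqref{eq:convergecondition} alone.
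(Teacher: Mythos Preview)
Your proof is correct and follows essentially the same route as the paper: both reduce to the finitely many indicator functions over the fixed partition $\mathcal{P}_m$, express the $L^2$ error as a sum of symmetric-difference measures $\mu\bigl(S^{-s}(p_{m,l})\triangle B^n_l(s)\bigr)$, and then deduce convergence from the Hausdorff estimates established in \cref{thm:mainflow}. The only cosmetic difference is that the paper first splits off $\K^s-\K^{\xi_n(s)}$ by continuity and then invokes an external lemma (lemma~4.1 of \cite{govindarajan2018approximation}) for the passage from Hausdorff convergence to $\mu(\cdot\triangle\cdot)\to 0$, whereas you handle the time-rounding inside the Hausdorff triangle inequality and supply that measure-theoretic step yourself via the equal-measure/compact-limit argument.
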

\begin{proof}
	Again, by continuity it suffices to just show:
	$$ \lim_{n\rightarrow \infty}   \int^t_{-t} \left\| \K^{\xi_{n}(s)} g - \K^{\xi_{n}(s)}_n g_n \right\|^2  \dd s = 0.$$
	For notational clarity, write $A^{(j)} := p_{m,j}$, $g^{(j)} := \chi_{p_{m,j}}$ and $ g^{(j)}_n := \mathcal{W}_n g^{(j)}$. We have:
	\begin{eqnarray*}
 \int^t_{-t} \left\| \K^{\xi_{n}(s)} g - \K^{\xi_{n}(s)}_n g_n \right\|^2  \dd s
		& \leq & q(m) \left( \sum_{j=1}^{q(m)}  |c_{j}|^2 \right)   \max_{j=1,\ldots, q(m)} \left(   \int^t_{-t} \left\| \K^{\xi_{n}(s)} g^{(j)} - \K^{\xi_{n}(s)}_n g^{(j)}_n \right\|^2  \dd s \right).    
	\end{eqnarray*}
	Since $\left\{\mathcal{P}_n\right\}^\infty_{n=1}$ are consecutive refinements, observe that $g^{(j)}_n = g^{(j)} $ for $n\geq m$, which implies: 
	\begin{eqnarray*}    \int^t_{-t} \left\| \K^{\xi_{n}(s)} g^{(j)} - \K^{\xi_{n}(s)}_n g^{(j)}_n \right\|^2  \dd s  =   \int^t_{-t} \mu \left( S^{\xi_n(s)}(A^{(j)}) \Delta S^{\xi_n(s)}_n(A^{(j)})  \dd \xi_{n}(s) \right)\dd s, \qquad \mbox{if } n\geq m.
	\end{eqnarray*}
One may use lemma 4.1 from \cite{govindarajan2018approximation} in combination with \cref{thm:mainflow} to establish the desired result. 
\end{proof}

\begin{theorem}[Operator convergence] \label{thm:opconvergence}
Suppose that $\left\{S^{\tau(n)}_{n}: \mathcal{P}_{n} \mapsto \mathcal{P}_{n} \right\}^{\infty}_{n=1}$ is a sequence of discrete maps that periodically approximates $S^t:X\mapsto X$  in the sense of  \cref{thm:mainflow}. Then, for every fixed  $t\in \mathbb{R}$:
	\begin{equation} 	\lim_{n \rightarrow \infty}   \int^t_{-t} \left\| \K^{s} g - \K^{\xi_{n}(s)}_n g_n \right\|^2 \dd s = 0, \label{eq:opconvergence}
	\end{equation} 
	where $g\in L^2(X,\mathcal{M},\mu)$ and $\xi_n(s)$ is defined by \eqref{stieltjes}.
\end{theorem}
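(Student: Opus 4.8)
The plan is to reduce the general statement to \cref{lemm:indicator} by a density argument combined with the fact that every operator involved is an isometry.

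First I would establish that $\bigcup_{m\ge 1} L^2_m(X,\mathcal{M},\mu)$, the piecewise-constant functions subordinate to the partitions $\mathcal{P}_m$, is dense in $L^2(X,\mathcal{M},\mu)$. This holds because the $\mathcal{P}_m$ are nested refinements whose diameters $l(m)\to 0$ by \eqref{eq:partitioncond}, so the generated sigma-algebras $\sigma(\mathcal{P}_m)$ increase to the full Borel sigma-algebra modulo $\mu$-null sets; equivalently $\mathcal{W}_m$ is the orthogonal projection (conditional expectation) onto $L^2_m$ and $\mathcal{W}_m g \to g$ in $L^2$ for every $g$, by the martingale convergence theorem. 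Given $\epsilon>0$ I then fix $m$ with $\|g - g^{(m)}\| < \epsilon$, writing $g^{(m)} := \mathcal{W}_m g \in L^2_m$, $g_n := \mathcal{W}_n g$, and $g^{(m)}_n := \mathcal{W}_n g^{(m)}$.

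Next, for $n\ge m$ I would split, pointwise in $s$,
\[
\bigl\| \K^{s} g - \K^{\xi_n(s)}_n g_n \bigr\|
\le \bigl\| \K^{s}(g - g^{(m)}) \bigr\|
+ \bigl\| \K^{s} g^{(m)} - \K^{\xi_n(s)}_n g^{(m)}_n \bigr\|
+ \bigl\| \K^{\xi_n(s)}_n (g^{(m)}_n - g_n) \bigr\| .
\]
The first summand equals $\|g - g^{(m)}\| < \epsilon$ since $\K^{s}$ is unitary. The third equals $\| \mathcal{W}_n(g^{(m)} - g) \| \le \|g^{(m)} - g\| < \epsilon$, because $\K^{\xi_n(s)}_n$ is a permutation operator on $L^2_n$, hence an isometry, and $\mathcal{W}_n$ is a contraction. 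Integrating over $s\in[-t,t]$ and using Minkowski's inequality in $L^2([-t,t])$ (where a constant $c$ has norm $c\sqrt{2t}$) gives
\[
\Bigl( \int_{-t}^{t} \bigl\| \K^{s} g - \K^{\xi_n(s)}_n g_n \bigr\|^2 \dd s \Bigr)^{1/2}
\le 2\sqrt{2t}\,\epsilon + \Bigl( \int_{-t}^{t} \bigl\| \K^{s} g^{(m)} - \K^{\xi_n(s)}_n g^{(m)}_n \bigr\|^2 \dd s \Bigr)^{1/2}.
\]
With $m$ now fixed, \cref{lemm:indicator} applies to $g^{(m)}$ and makes the last integral tend to $0$ as $n\to\infty$; choosing $n$ large enough that it is below $\epsilon^2$ yields $\limsup_{n\to\infty}\int_{-t}^{t} \| \K^{s} g - \K^{\xi_n(s)}_n g_n \|^2 \dd s \le (2\sqrt{2t}+1)^2\epsilon^2$, and letting $\epsilon\downarrow 0$ finishes the proof. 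A minor point I would address in passing is measurability of $s \mapsto \| \K^{s} g - \K^{\xi_n(s)}_n g_n \|^2$: this follows from strong continuity of $\{\K^{s}\}$ together with $\xi_n$ being a step function, so the integrand is piecewise continuous.

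I expect the only step requiring genuine care to be the density claim: one must invoke that $\mu$ is absolutely continuous with full support to be sure the refining, diameter-vanishing partitions generate $\mathcal{M}$ up to null sets rather than a strictly smaller sub-sigma-algebra; everything after that is routine bookkeeping with isometries and the triangle inequality.
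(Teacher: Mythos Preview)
Your proposal is correct and follows essentially the same approach as the paper: approximate $g$ by a simple function $g^{(m)}\in L^2_m(X,\mathcal{M},\mu)$ using density of $\bigcup_m L^2_m$, control the resulting error terms via the isometry of $\K^s$ and $\K^{\xi_n(s)}_n$, and invoke \cref{lemm:indicator} for the middle term. The paper's proof merely sketches this (deferring to the discrete-time case in \cite{govindarajan2018approximation}), whereas you have filled in the details carefully, including the density justification and the measurability remark.
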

\begin{proof}
	The proof is very similar to discrete case \cite{govindarajan2018approximation}. One can pick a 
	$$g_m = \sum_{j=1}^{q(m)} c_{j} \chi_{p_{m,j}} \in  L^2_m(X,\mathcal{M},\mu) $$
	that approximates $g\in L^2(X,\mathcal{M},\mu)$ arbitrarily well in the norm-wise sense by choosing a sufficiently large $m\in\mathbb{N}$. 
\end{proof}

\section{Spectral convergence} \label{sec:spectralconv}

In this section, we establish results related to spectral convergence. In particular, we will examine how \eqref{eq:targetapprx} converges to \eqref{eq:target}.

\subsection{Approximation of the spectral projectors} 

Consider any smooth test function $\varphi\in\mathcal{D}(\mathbb{R})$ on the reals, and define:
$$ \Proj_{\varphi} g = \int_{\mathbb{R}} \varphi (\omega) \dd\Proj_{\omega} g,\qquad  \Proj^{\tau(n)}_{n, \varphi} g_n =  \displaystyle \sum_{\omega_{n,k} \in \mathbb{R} } \varphi (\omega_{n, k}) \Proj^{\tau(n)}_{n, \omega_{n, k}} g_n.$$
We will prove the following. 

\begin{theorem} \label{thm:projectorconvergence}
	Suppose that $\left\{S^{\tau(n)}_{n}: \mathcal{P}_{n} \mapsto \mathcal{P}_{n} \right\}^{\infty}_{n=1}$ is a sequence of discrete maps that periodically approximates $S^t:X\mapsto X$  in the sense of  \cref{thm:mainflow}.  For any smooth test function $\varphi\in\mathcal{D}(\mathbb{R})$ and observable $g \in L^2(X,\mathcal{M},\mu)$, we have:
	$$  \lim_{n\rightarrow \infty }\left\| \Proj_{\varphi} g - \Proj^{\tau(n)}_{n, \varphi} g_n \right\|   = 0.$$
\end{theorem}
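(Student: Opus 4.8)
Here is how I would go about proving \cref{thm:projectorconvergence}.

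The plan is to rewrite both $\Proj_\varphi g$ and $\Proj^{\tau(n)}_{n,\varphi}g_n$ as an integral, respectively a Riemann sum, of the Koopman group against the Fourier transform of $\varphi$, and then feed the difference into the operator–convergence estimate of \cref{thm:opconvergence}. A first, purely formal, reduction: since $\|\Proj_\psi g\|\le\|\psi\|_\infty\|g\|$ by the functional calculus, and since $\mathcal{W}_n$ is an orthogonal projection so that $\|\Proj^{\tau(n)}_{n,\psi}g_n\|\le\|\psi\|_\infty\|g_n\|\le\|\psi\|_\infty\|g\|$ for every $\psi$ and every $n$, a uniform cut‑off argument $\varphi_j(\omega)=\varphi(\omega)\chi(\omega/j)$ reduces the statement to the case of a \emph{compactly supported} $\varphi$; from now on I assume $n$ is large enough that $\mathrm{supp}\,\varphi\subset(-\hat\omega(n),\hat\omega(n))$.

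Next I would set $\hat\varphi(t):=\int_{\mathbb R}\varphi(\omega)e^{-i\omega t}\dd\omega$ (a rapidly decreasing function, with $\hat\varphi,\hat\varphi'\in L^1$), so that $\varphi(\omega)=\tfrac1{2\pi}\int_{\mathbb R}\hat\varphi(t)e^{i\omega t}\dd t$, and prove the two representations
$$\Proj_\varphi g=\frac1{2\pi}\int_{\mathbb R}\hat\varphi(t)\,\K^{t}g\,\dd t ,\qquad \Proj^{\tau(n)}_{n,\varphi}g_n=\frac1{2\pi}\sum_{m\in\mathbb Z}\tau(n)\,\hat\varphi(m\tau(n))\,\K^{m\tau(n)}_n g_n .$$
The first is Fubini between the scalar integral and the projection‑valued measure, legitimate because $\hat\varphi\in L^1$ and $D\mapsto\langle h,\Proj_D g\rangle$ has total variation $\le\|h\|\|g\|$. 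The second comes from expanding the smooth function $\varphi|_{[-\hat\omega(n),\hat\omega(n))}$ in its Fourier series of period $2\hat\omega(n)=2\pi/\tau(n)$, giving $\varphi(\omega)=\sum_{m}d_{n,m}e^{im\tau(n)\omega}$ with $d_{n,m}=\tfrac{\tau(n)}{2\pi}\hat\varphi(m\tau(n))$ (absolutely convergent); since $\omega_{n,k}\in[-\hat\omega(n),\hat\omega(n))$ and $\K^{m\tau(n)}_n v_{n,k}=e^{im\tau(n)\omega_{n,k}}v_{n,k}$, substituting this into $\Proj^{\tau(n)}_{n,\varphi}g_n=\sum_k\varphi(\omega_{n,k})\langle v_{n,k},g_n\rangle v_{n,k}$ and interchanging the sum over $m$ with the finite sum over eigenvectors gives the claimed identity.

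With these in hand I would decompose $\Proj_\varphi g-\Proj^{\tau(n)}_{n,\varphi}g_n=E^{(1)}_n+E^{(2)}_n$, where $E^{(1)}_n:=\Proj_\varphi g-\tfrac1{2\pi}\sum_{m}\tau(n)\hat\varphi(m\tau(n))\K^{m\tau(n)}g$ is a Riemann‑sum error and $E^{(2)}_n:=\tfrac1{2\pi}\sum_{m}\tau(n)\hat\varphi(m\tau(n))\big(\K^{m\tau(n)}g-\K^{m\tau(n)}_n g_n\big)$. The term $E^{(1)}_n$ tends to zero using only strong continuity of $\{\K^t\}$ together with $\hat\varphi,\hat\varphi'\in L^1$. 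For $E^{(2)}_n$ I would fix $T>0$ and split the sum at $|m\tau(n)|\le T$; the tail contributes at most $\tfrac{\|g\|}{\pi}\sum_{|m\tau(n)|>T}\tau(n)|\hat\varphi(m\tau(n))|=:\eta(T)$, which is small uniformly in $n$ by the rapid decay of $\hat\varphi$, and on the finite part Cauchy–Schwarz in $m$ bounds the norm by a constant (essentially $\|\hat\varphi\|_{L^2}$) times $\big(\sum_{|m\tau(n)|\le T}\tau(n)\|\K^{m\tau(n)}g-\K^{m\tau(n)}_n g_n\|^2\big)^{1/2}$.

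The hard part — the step I expect to be the real obstacle — is showing this last factor tends to $0$, and this is exactly where \cref{thm:opconvergence} and the precise definition \eqref{stieltjes} of $\xi_n$ are needed, since the operator‑convergence estimate controls $\int\|\K^{s}g-\K^{\xi_n(s)}_n g_n\|^2\dd s$ with a \emph{continuous} argument on the $g$‑side. The point is that $\xi_n(s)=m\tau(n)$ holds \emph{exactly} for $s\in((m-1)\tau(n),m\tau(n)]$ (and symmetrically for negative times), so on that interval $\K^{m\tau(n)}_n g_n=\K^{\xi_n(s)}_n g_n$ and $\|\K^{m\tau(n)}g-\K^{m\tau(n)}_n g_n\|\le\omega_g(\tau(n))+\|\K^{s}g-\K^{\xi_n(s)}_n g_n\|$ with $\omega_g(\delta):=\sup_{|r|\le\delta}\|\K^{r}g-g\|\to0$; squaring, multiplying by $\tau(n)$, integrating over each such interval and summing (with $\tau(n)\|g-g_n\|^2\le4\tau(n)\|g\|^2$ for $m=0$) yields
$$\sum_{|m\tau(n)|\le T}\tau(n)\|\K^{m\tau(n)}g-\K^{m\tau(n)}_n g_n\|^2\le 4(T{+}1)\,\omega_g(\tau(n))^2+4\!\!\int_{-(T+1)}^{\,T+1}\!\!\|\K^{s}g-\K^{\xi_n(s)}_n g_n\|^2\dd s+4\tau(n)\|g\|^2,$$
whose right‑hand side tends to $0$ by \cref{thm:opconvergence} and strong continuity. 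Hence $\limsup_{n\to\infty}\|\Proj_\varphi g-\Proj^{\tau(n)}_{n,\varphi}g_n\|\le\eta(T)$ for every $T>0$, and letting $T\to\infty$ completes the argument.
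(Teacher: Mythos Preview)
Your argument is correct and follows essentially the same strategy as the paper: express both projectors, via the Fourier transform of $\varphi$, as superpositions of Koopman operators, cut off the tail using the rapid decay of $\hat\varphi$, and invoke \cref{thm:opconvergence} on the remaining bounded time window. Your version is more explicit---reducing first to compactly supported $\varphi$, writing the discrete projector as an exact Fourier series rather than the paper's integral $\int b(\tau)\,\K^{\xi_n(\tau)}_n g_n\,\dd\tau$, and spelling out the $\xi_n$-step-function trick that converts the discrete sum back into the integral appearing in \cref{thm:opconvergence}---but these are refinements of the same proof, not a different route.
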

\begin{proof}
	Express the test function in terms of its Fourier transform:
	$ \varphi (\omega) = \int_{-\infty}^{\infty}  b(\tau) e^{i\tau\omega} \dd\tau, $
	and note that $\int_{-\infty}^{\infty}  | b(\tau)| \dd\tau < \infty$. We see that:
	$$ \Proj_{\varphi} g =\int_{\mathbb{R}} \left( \int_{-\infty}^{\infty}  b(\tau) e^{i\tau\omega} \dd\tau \right) \dd\Proj_{\omega} g \\
	= \int_{-\infty}^{\infty} b(\tau) \left( \int_{\mathbb{R}} e^{i \tau \omega}  \dd\Proj_{\omega} g  \right) \dd \tau\\
	=  \int_{-\infty}^{\infty} b(\tau)  \K^\tau g \dd \tau, $$
	where we employed the spectral theorem of unitary one-parameter groups \cite{Akhiezer1963} in the last equality. Similarly, it also holds that:
	$$  \Proj^{\tau(n)}_{n, \varphi} g_n =   \int_{-\infty}^{\infty} b(\tau)   \K^{\xi(\tau)}_n g_n \dd \tau $$
	Hence,
	$$ \Proj_{\varphi} g -  \Proj^{\tau(n)}_{n, \varphi} g_n = \int_{-\infty}^{\infty} b(\tau) (  \K^{\tau} g -  \K^{\xi(\tau)}_n g_n) \dd \tau $$
	Now let $\epsilon> 0$ and choose $t\in \mathbb{R}$ such that:
	$$ \int_{|\tau|> t } | b (\tau) | \dd \tau < \frac{\epsilon}{ 4 \left\| g \right\| } $$
	Noting that:
	\begin{eqnarray*}
		\left\| \Proj_{\varphi} g -  \Proj_{n, \varphi} g_n \right\| 
		& \leq &  M \int_{-t}^{t}   \left\|  \K^{\tau} g -  \K^{\xi(\tau)}_n g_n  \right\| \dd \tau  + \frac{\epsilon}{2},   \\
	\end{eqnarray*}
	and using \cref{thm:opconvergence} the proof can be completed.
\end{proof}

Just as in the discrete-time case \cite{govindarajan2018approximation}, smoothen the indicator function $\chi_D(\omega)$ using the summability kernel, i.e.
$$\chi_{D_{\alpha}} (\omega) = \int_{\mathbb{S}} \varphi_{\alpha}(\theta,\xi)  \chi_D(\xi) \dd \xi,$$ 
where $\varphi_{\alpha}: \mathbb{R} \times \mathbb{R} \mapsto \mathbb{R}_+$:
\begin{equation}\varphi_{\alpha}(x,y) = \begin{cases}   \frac{K}{\alpha} \exp\left({\frac{-1}{1- \left(\frac{|x-y|}{\alpha} \right)^2 }}\right) &  \frac{|x-y|}{\alpha} < 1    \\
0 & \mbox{otherwise}                          
\end{cases}, \label{eq:kernel}
\end{equation} 
for some $\alpha>0$ and $K=(\int^{1}_{-1} \exp(\tfrac{-1}{1-x^2})  \dd x)^{-1}$. Define:
\begin{equation}
\Proj_{D_{\alpha}}g = \int_{\mathbb{R}} \chi_{D_{\alpha}} (\omega) \dd\Proj_{\omega} g,\qquad  \Proj^{\tau(n)}_{n, {D_{\alpha}}} g_n =  \displaystyle \sum_{\omega_{n,k} \in \mathbb{R} } \chi_{D_{\alpha}} (\omega_{n, k}) \Proj_{n, \omega_{n, k}} g_n.
\end{equation}
We have the following corollary.

\begin{corollary}[Convergence of spectral projectors] \label{corr:projectorconvergence}
	Suppose that $\left\{S^{\tau(n)}_{n}: \mathcal{P}_{n} \mapsto \mathcal{P}_{n} \right\}^{\infty}_{n=1}$ is a sequence of discrete maps that periodically approximates $S^t:X\mapsto X$  in the sense of  \cref{thm:mainflow}. Given any $\alpha>0$ and interval $D \subset \mathbb{R}$, it follows that:
	$$  \lim_{n\rightarrow \infty }\left\| \Proj_{D_{\alpha}} g - \Proj^{\tau(n)}_{n, {D_{\alpha}}} g_n \right\|   = 0, $$
	where $g \in L^2(X,\mathcal{M},\mu)$. 
\end{corollary}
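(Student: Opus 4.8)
The plan is to deduce Corollary~\ref{corr:projectorconvergence} directly from Theorem~\ref{thm:projectorconvergence} by verifying that the smoothed indicator $\chi_{D_\alpha}$ is itself an admissible test function in $\mathcal{D}(\mathbb{R})$, and then matching up the two definitions of the projectors. First I would observe that for a fixed $\alpha>0$ and a fixed interval $D=[a,b]\subset\mathbb{R}$, the mollification $\chi_{D_\alpha}(\omega) = \int_{\mathbb{R}} \varphi_\alpha(\omega,\xi)\chi_D(\xi)\,\dd\xi$ is a convolution of the compactly supported $L^1$ function $\chi_D$ with the bump function in \eqref{eq:kernel}, which is $C^\infty$ with compact support. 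Hence $\chi_{D_\alpha}\in C^\infty_c(\mathbb{R})\subset\mathcal{D}(\mathbb{R})$: it is smooth, and its support is contained in the $\alpha$-neighbourhood $\overline{\bigcup_{\theta\in D}B_\alpha(\theta)}$, which is compact. This is the only genuinely new point to check, and it is routine.

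Once $\chi_{D_\alpha}$ is identified as a legitimate test function $\varphi:=\chi_{D_\alpha}$, I would simply note that the two operators appearing in the corollary are exactly the operators $\Proj_\varphi g$ and $\Proj^{\tau(n)}_{n,\varphi}g_n$ from Theorem~\ref{thm:projectorconvergence}, by comparing the defining formulas: $\Proj_{D_\alpha}g = \int_{\mathbb{R}} \chi_{D_\alpha}(\omega)\,\dd\Proj_\omega g = \Proj_\varphi g$, and likewise $\Proj^{\tau(n)}_{n,D_\alpha}g_n = \sum_{\omega_{n,k}\in\mathbb{R}} \chi_{D_\alpha}(\omega_{n,k})\Proj^{\tau(n)}_{n,\omega_{n,k}}g_n = \Proj^{\tau(n)}_{n,\varphi}g_n$ (here I would remark that the sum is in fact finite, since $\chi_{D_\alpha}$ has compact support and only finitely many eigenvalues $\omega_{n,k}$ lie in it, so no convergence issue arises on the discrete side). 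Then Theorem~\ref{thm:projectorconvergence} applied to this particular $\varphi$ gives
\[
\lim_{n\to\infty}\left\| \Proj_{D_\alpha}g - \Proj^{\tau(n)}_{n,D_\alpha}g_n \right\| = \lim_{n\to\infty}\left\| \Proj_\varphi g - \Proj^{\tau(n)}_{n,\varphi}g_n \right\| = 0,
\]
which is the claim.

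I do not anticipate a serious obstacle here: the corollary is essentially a specialization of the theorem to a particular choice of test function. The one place that requires a little care is confirming smoothness of $\chi_{D_\alpha}$ at the boundary of its support — i.e. that the bump kernel $\varphi_\alpha$ decays to zero together with all derivatives as $|x-y|\uparrow\alpha$ — but this is the standard fact that the function $x\mapsto \exp(-1/(1-x^2))$ extended by $0$ outside $(-1,1)$ is $C^\infty$, so differentiating under the integral sign in the convolution is justified and $\chi_{D_\alpha}\in\mathcal{D}(\mathbb{R})$. If one wanted to be slightly more careful about whether the paper's $\mathcal{D}(\mathbb{R})$ means Schwartz space or $C^\infty_c$, it makes no difference: $C^\infty_c(\mathbb{R})$ is contained in both, and Theorem~\ref{thm:projectorconvergence}'s proof only used that $\varphi$ has an integrable Fourier transform, which holds for any $\varphi\in C^\infty_c(\mathbb{R})$.
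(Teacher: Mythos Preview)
Your proposal is correct and matches the paper's approach: the corollary is stated without proof because it is precisely the specialization of Theorem~\ref{thm:projectorconvergence} to the test function $\varphi=\chi_{D_\alpha}$, which lies in $\mathcal{D}(\mathbb{R})$ as you verify. Your extra care about $\chi_{D_\alpha}\in C^\infty_c(\mathbb{R})$ and the finiteness of the discrete sum is more detail than the paper supplies, but exactly the intended reasoning.
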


\subsection{Approximation of the spectral density function}
Recall the definition of the spectral density function along with its discrete analogue in \eqref{eq:discrete}. To assess the convergence of $\rho_n(\omega;g_n)$ to $\rho(\omega;g)$, we again make use of summability kernels \eqref{eq:kernel}.
\begin{theorem}[Approximation of the spectral density function]
	Let:
	$$ \rho_{\alpha}(\omega; g) :=  \int^{\infty}_{-\infty} \varphi_{\alpha}(\omega,\xi)  \rho(\xi; g)  \dd \xi, \qquad \rho_{\alpha,n}(\omega; g_n) := \int^{\infty}_{-\infty}  \varphi_{\alpha}(\omega,\xi)  \rho_{n}(\xi; g_n)  \dd \xi.  $$
	It follows that:
	$$\lim_{n\rightarrow\infty} \rho_{\alpha,n}(\omega; g_n)  =  \rho_{\alpha}(\omega; g), \quad \mbox{uniformly}.
	$$
\end{theorem}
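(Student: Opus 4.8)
The plan is to reduce the statement about the smoothed spectral density functions to the already-established $L^2$ operator convergence (Theorem~\ref{thm:opconvergence}), using the representation of $\rho_\alpha$ as a quadratic form in the Koopman evolution. First I would write both mollified densities as pairings against the kernel. Since $\rho(\omega;g)=\sum_l |a_l|^2 \delta(\omega-\omega_l) + (\text{absolutely continuous part from }\Proj^r)$, and more cleanly $\rho(\cdot;g)$ is the measure $d\langle \Proj_\omega g, g\rangle$, one has
$$ \rho_\alpha(\omega;g) = \int_{-\infty}^\infty \varphi_\alpha(\omega,\xi)\, d\langle \Proj_\xi g, g\rangle = \langle \varphi_\alpha(\omega,\cdot)(A)\, g, g\rangle, $$
where $A$ is the generator; and similarly $\rho_{\alpha,n}(\omega;g_n) = \sum_{k} \varphi_\alpha(\omega,\omega_{n,k}) \|\Proj^{\tau(n)}_{n,\omega_{n,k}} g_n\|^2 = \langle \varphi_\alpha(\omega,\cdot) \text{ applied to } \K_n, g_n, g_n\rangle$. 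The key move is to expand $\varphi_\alpha(\omega,\cdot)$ via its Fourier transform in the second variable: write $\varphi_\alpha(\omega,\xi) = \int_{-\infty}^\infty \hat b_\alpha(\omega,\tau)\, e^{i\tau\xi}\, d\tau$ with $\int |\hat b_\alpha(\omega,\tau)|\,d\tau \le C_\alpha < \infty$ uniformly in $\omega$ (this follows from $\varphi_\alpha(\omega,\cdot)$ being a fixed smooth compactly supported bump translated by $\omega$, so $\hat b_\alpha(\omega,\tau) = e^{i\tau\omega}\hat b_\alpha(0,\tau)$ and the $L^1$ norm in $\tau$ is independent of $\omega$). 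Then, exactly as in the proof of Theorem~\ref{thm:projectorconvergence},
$$ \rho_\alpha(\omega;g) = \int_{-\infty}^\infty \hat b_\alpha(\omega,\tau)\, \langle \K^\tau g, g\rangle\, d\tau, \qquad \rho_{\alpha,n}(\omega;g_n) = \int_{-\infty}^\infty \hat b_\alpha(\omega,\tau)\, \langle \K^{\xi_n(\tau)}_n g_n, g_n\rangle\, d\tau. $$

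Next I would estimate the difference. We have
$$ |\rho_{\alpha,n}(\omega;g_n) - \rho_\alpha(\omega;g)| \le \int_{-\infty}^\infty |\hat b_\alpha(\omega,\tau)| \cdot \big| \langle \K^{\xi_n(\tau)}_n g_n, g_n\rangle - \langle \K^\tau g, g\rangle \big|\, d\tau. $$
Split the inner correlation difference via Cauchy--Schwarz and the triangle inequality: $\langle \K^{\xi_n(\tau)}_n g_n, g_n\rangle - \langle \K^\tau g, g\rangle = \langle \K^{\xi_n(\tau)}_n g_n - \K^\tau g, g_n\rangle + \langle \K^\tau g, g_n - g\rangle$, so the absolute value is bounded by $\|g_n\|\,\|\K^{\xi_n(\tau)}_n g_n - \K^\tau g\| + \|g\|\,\|g_n - g\|$, and $\|g_n\|\le\|g\|$. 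Since $|\hat b_\alpha(\omega,\tau)| = |\hat b_\alpha(0,\tau)|$ is independent of $\omega$ and in $L^1$, given $\epsilon>0$ pick $t$ with $\int_{|\tau|>t}|\hat b_\alpha(0,\tau)|\,d\tau < \epsilon$; the tail contributes at most $\epsilon\cdot(\|g\|\cdot 2 + \ldots)$ uniformly in $\omega$ and $n$ (using $\|\K^{\xi_n(\tau)}_n g_n - \K^\tau g\|\le 2\|g\|$). For the central part $|\tau|\le t$, bound it by $\|g\| \int_{-t}^{t}\|\K^{\xi_n(\tau)}_n g_n - \K^\tau g\|\,d\tau + \|g\|\,\|g_n-g\|\int_{-t}^t|\hat b_\alpha(0,\tau)|\,d\tau$, and apply Theorem~\ref{thm:opconvergence} (in the form $\int_{-t}^t \|\K^s g - \K^{\xi_n(s)}_n g_n\|^2\,ds \to 0$, hence by Cauchy--Schwarz the $L^1$ norm on $[-t,t]$ also tends to zero) together with $\|g_n - g\|\to 0$. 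All bounds are independent of $\omega$, which gives the uniform convergence.

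The main obstacle I anticipate is making rigorous the two technical points that underlie the whole argument: first, justifying the representation $\rho_{\alpha,n}(\omega;g_n) = \int \hat b_\alpha(\omega,\tau)\langle \K^{\xi_n(\tau)}_n g_n, g_n\rangle\,d\tau$ — i.e. that the Fourier inversion can be interchanged with the finite spectral sum, which is actually easy here because for each fixed $n$ the spectral expansion is a finite sum and Fubini applies with the $L^1$ bound on $\hat b_\alpha$; and second, the subtlety that $\xi_n(\tau)$ in the definition of $\rho_{\alpha,n}$ must match the one used in Theorem~\ref{thm:opconvergence}, plus handling the fact that $\langle \K^{\xi_n(\tau)}_n g_n, g_n\rangle$ need not be smooth in $\tau$ (it is only measurable and bounded), so the manipulation should be done at the level of the integral identity rather than pointwise in $\tau$. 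A secondary point worth a line: the uniformity in $\omega$ hinges on $\|\hat b_\alpha(\omega,\cdot)\|_{L^1}$ being $\omega$-independent, which is immediate from translation structure of the mollifier, but should be stated explicitly.
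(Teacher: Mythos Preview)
Your proposal is correct and in fact more direct than the paper's own argument. Both proofs rest on the same Fourier representation
\[
\rho_\alpha(\omega;g)=\int_{-\infty}^{\infty}\hat b_\alpha(\omega,\tau)\,\langle \K^\tau g,g\rangle\,\dd\tau,
\qquad
\rho_{\alpha,n}(\omega;g_n)=\int_{-\infty}^{\infty}\hat b_\alpha(\omega,\tau)\,\langle \K^{\xi_n(\tau)}_n g_n,g_n\rangle\,\dd\tau,
\]
and both ultimately invoke the operator convergence of \cref{thm:opconvergence} on a finite $\tau$-window together with rapid decay of $\hat b_\alpha$ to handle the tails. The difference is in how uniformity in $\omega$ is obtained. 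The paper proceeds in two steps: it first establishes that $\{\rho_\alpha(\cdot;g)-\rho_{\alpha,n}(\cdot;g_n)\}$ is equicontinuous by bounding the derivative (using the Schwartz decay $|d_\alpha(\tau)|\le C_\alpha(1+|\tau|^N)^{-1}$), and separately shows $L^2(\mathbb{R})$-convergence via Parseval; uniform convergence is then inferred from these two facts. Your route bypasses this: by writing $\hat b_\alpha(\omega,\tau)=e^{i\tau\omega}\hat b_\alpha(0,\tau)$ and noting $|\hat b_\alpha(\omega,\tau)|=|\hat b_\alpha(0,\tau)|$ is $\omega$-independent, the tail/center split and the Cauchy--Schwarz passage from $L^2$ to $L^1$ on $[-t,t]$ yield a bound that is manifestly uniform in $\omega$ from the outset. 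This is cleaner and avoids the equicontinuity/Arzel\`a--Ascoli layer. The technical caveat you flag about matching $\xi_n(\tau)$ in the discrete Fourier identity is real but is handled identically in the paper (which simply asserts $a_n(\tau;g_n)=\langle g_n,\K_n^{\xi_n(\tau)}g_n\rangle$), so your proof is at least as rigorous on that point.
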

\begin{proof} Similar to \cite{govindarajan2018approximation}, to prove uniform convergence, we will establish that: (i) $\rho_{\alpha}(\omega; g) - \rho_{\alpha,n}(\omega;g_n)$ forms an equicontinuous family, and (ii) $\rho_{\alpha,n}(\omega;g_n)$ converges to $\rho_{\alpha}(\omega;g)$ in the $L^2$-norm.
	\begin{enumerate}[(i)]
		\item To show that $\rho_{\alpha}(\omega; g) - \rho_{\alpha,n}(\omega;g_n)$ is an equicontinuous family, we will confirm that its derivative $\rho^{'}_{\alpha}(\omega; g) - \rho^{'}_{\alpha,n}(\omega;g_n)$ is uniformly bounded. Write:
		$$ \rho_{\alpha}(\omega; g) - \rho_{\alpha,n}(\omega;g_n) = \frac{1}{2\pi}\int^{\infty}_{-\infty} b_n(\tau;g) e^{i \tau \omega}\dd \tau ,\qquad b_n(\tau;g) :=   \int^{\infty}_{-\infty} e^{-i\tau\omega}  \left( \rho_{\alpha}(\omega;g)  -   \rho_{\alpha,n}(\omega;g_n) \right)  \dd \omega.  $$
		According to the spectral theorem of unitary operators \cite{Akhiezer1963}, we have by construction that:
		$$ a(\tau;g) := \int_{-\infty}^{\infty} e^{-i\tau\omega} \rho(\omega;g) \dd \theta =  \langle  g , \K^{\tau} g \rangle,\quad a_n(\tau;g_n) := \int_{-\infty}^{\infty} e^{-i\tau \omega} \rho_n(\omega;g_n) \dd \omega =  \langle  g_n , \K^{\xi_{n}(\tau)}_n g_n \rangle, \quad \tau\in \mathbb{R}.$$
		The functions $\rho_{\alpha}(\omega; g)$ and $\rho_{\alpha,n}(\omega;g_n)$ are defined as convolutions with a function belonging to the \emph{Schwartz space}. Recognizing that convolutions implies pointwise multiplication in Fourier domain, we obtain:
		$$   b_n(\tau;g) =  d_{\alpha}(\tau)   ( a(\tau;g) - a_n(\tau;g_n) ), $$
		where:
		$$ d_{\alpha}(\tau) :=  \int_{-\infty}^{\infty} e^{-i\tau\omega}  \varphi_{\alpha}(\omega,0) \dd \omega \quad \mbox{and} \quad |d_{\alpha}(\tau)|\leq \frac{C_{\alpha}}{1+|\tau|^N} \mbox{ for every }N\in\mathbb{N}. $$ 
		Now examining the  derivative $\rho^{'}_{\alpha}(\omega; g) - \rho^{'}_{\alpha,n}(\omega;g_n)$ more closely, we see that:
		\begin{eqnarray*}
			\left| \rho^{'}_{\alpha}(\omega; g) - \rho^{'}_{\alpha,n}(\omega;g_n) \right| & = &  \left| \frac{1}{2\pi} \int^{\infty}_{-\infty} it b_n(\tau;g) e^{i \tau \omega} \dd \tau \right| \\
			& \leq &   \frac{1}{2\pi}  \int^{\infty}_{-\infty} |\tau| |d_{\alpha}(\tau)|\left| \langle  g , \K^{\tau} g \rangle - \langle  g_n , \K^{\xi_n(\tau)}_n g_n \rangle \right| \dd \tau \\
			&  \leq & \frac{ \left\| g \right\|^2}{\pi} \int^{\infty}_{-\infty}  \frac{C_{\alpha} |\tau| }{1+|\tau|^N} \dd \tau,
		\end{eqnarray*}
		which is a convergent sum for $N\geq 3$.
		\item To show that $\rho_{\alpha,n}(\omega;g_n)$ converges to $\rho_{\alpha}(\omega;g)$ in the $L^2$-norm, we will use Parseval's identity to confirm that the integral:
			$ \int^{\infty}_{-\infty} \left| b_n(\tau;g)\right|^2 \dd \tau $
		can be made arbitrarily small. At first, note that:
		$$ a(\tau;g) - a_n(\tau;g_n) =   \langle g ,   \K^{t} g -  \K^{\xi_n(\tau)}_n g_n \rangle   -   \langle g - g_n   ,   \K^{\xi_n(\tau)}_n g_n \rangle $$
		By the triangle inequality and Cauchy-Schwarz, we obtain:
		\begin{eqnarray*}  |a(\tau;g) - a_n(\tau;g_n)| 	& \leq &   \left\| g \right\| \left(  \left\| \K^{\tau} g -  \K^{\xi_n(\tau)}_n g_n  \right\|    +   \left\| g - g_n \right\|          \right).
		\end{eqnarray*}
		Let $\epsilon>0$, and choose $t\in \mathbb{R}_+$ such that:
		\begin{equation} 
		\int_{|\tau|>t}   |d_{\alpha}(\tau)|^2 \dd \tau \leq \epsilon.   \label{eq:yeahbaby}
		\end{equation}
		This is always possible, because $\varphi_{\alpha}(\theta,0)$ is a $C^{\infty}$ smooth function, and therefore also square-integrable. The following upper bound can be established:
		\begin{eqnarray*}
			\int^{\infty}_{-\infty} \left| b_n(\tau;g)\right|^2 \tau
			& = & 	\int^{\infty}_{-\infty} |d_{\alpha}(\tau)|^2 |a(\tau;g) - a_n(\tau;g_n)|^2 \dd \tau \\
			& \leq &  \int^{\infty}_{-\infty}   |d_{\alpha}(\tau)|^2  \left( \left\| g \right\| \left( \left\| \K^{\tau} g -  \K^{\xi_n(\tau)}_n g_n \right\|    +   \left\| g - g_n \right\| \right)    \right)^2 \dd\tau  \\
			& \leq & \left\| g \right\|^2  \max_{-t \leq \tau \leq t} |d_{\alpha}(\tau)|^2    \int^{t}_{-t} \left\| \K^{\tau} g -  \K^{\xi_n(\tau)}_n g_n \right\|^2 \dd\tau + 16\left\| g \right\|^2   \int_{|\tau|>t}   |d_{\alpha}(l)|^2 \dd \tau \\
			&   & + \left\| g \right\|^2  \left\| g - g_n \right\| \int^{t}_{-t}   |d_{\alpha}(\tau)|^2  \left(  2\left\| \K^{\tau} g -  \K^{\xi_n(\tau)}_n g_n \right\| +    \left\| g - g_n \right\| \right) \dd \tau.
		\end{eqnarray*}
		Now apply \eqref{eq:yeahbaby} and \cref{thm:opconvergence} to complete the proof.
	\end{enumerate}
\end{proof}

\section{Some remarks on the simulation of advection equations}  \label{sec:advection}
The generator of the associated Koopman unitary group is the operator $f(x) \cdot \nabla$, 
where $f(x)$ is the vector field that generates the measure-preserving flow $S^t(x)$ . Subsequently, the time evolution of an observable under the flow $S^t(x)$ is equivalent to the solution of an advection equation associated with the vector field $f(x)$. 
 Henceforth, the discretization \eqref{eq:discreteoperator} is an approximate solution propagator to the advection problem:
\begin{subequations}
\begin{eqnarray}
\left(\left(\frac{\partial}{\partial t} - \mathcal{G}  \right) \phi \right) (t,x) &=& 0\\
 \left(\mathcal{A} \phi\right)(t,x)  & = &\phi_0(x),  \label{eq:boundarycond}
\end{eqnarray} \label{eq:advection}
\end{subequations}
where:
$$ \mathcal{G}\phi(t,x)  := f(x) \cdot \nabla \phi(t,x) ,\qquad   \left(\mathcal{A} \phi\right)(t,x) := \left. \phi(t,x)\right|_{t=0}. $$
Within the Computional Fluid Dynamics (CFD) community, there is already some familiarity on the concept of periodic approximation. Specifically, McLachan \cite{mclachlan1999area} coined the term ``cell rearrangement model'' to describe such approximation schemes. In the previous sections, we have shown that these methods are convergent both in a spectral sense and operator sense. That is, if $\phi_n(t,x) := \K^{\xi_{n}(t)}_n \mathcal{W}_n \phi_0(x)$, then for any fixed $t\in \mathbb{R}$:
$$
	(i).\quad \lim_{n\rightarrow \infty }\left\| \phi(t,x) - \phi_n(t,x)  \right\|  = 0,\qquad
	(ii).\quad \lim_{n\rightarrow \infty }\left\| \Proj_{\varphi} \phi(t,x) - \Proj^{\tau(n)}_{n,\varphi} \phi_n(t,x)  \right\|  = 0. 
$$

The advection equation is a hyperbolic PDE which has been heavily studied in the literature. A whole plethora of alternative numerical schemes can be used to solve  system \eqref{eq:advection}. A simple finite-difference scheme can already exhbibit very different convergence properties than the periodic approximation approach. Consider the \emph{translation flow} on the circle which was examined in \cref{sec:example}. The generator $\mathcal{G}$ in this case equals $\Omega \tfrac{\partial}{\partial x}$ and is spatially invariant. Recall the temporal and spatial discretizations which were used in the periodic approximation:
$$\l(n)=\frac{1}{r^n},\quad \tau(n) = \frac{\gamma}{\Omega w^n}.  $$
A \emph{first-order upwind} finite-difference scheme yields a sequence of discretizations of the Koopman operators, whose matrix representation is of the type:
 \begin{equation} \hat{U}^{\xi_n(t)}_n =  \begin{bmatrix} 1- \gamma \left(\frac{r}{w}\right)^n &   &  &  \gamma \left(\frac{r}{w}\right)^n \\
  \gamma \left(\frac{r}{w}\right)^n  &   1- \gamma \left(\frac{r}{w}\right)^n &   \\
 &           \ddots                     & \ddots  &\\
                   &                                &   \gamma \left(\frac{r}{w}\right)^n     & 1- \gamma \left(\frac{r}{w}\right)^n                    \end{bmatrix}^{\frac{\xi_n(t)}{\tau(n)}}.   \label{eq:upwind}
 \end{equation}
 For a periodic approximation, weak convergence of the spectra was guaranteed when $\tfrac{r}{w}>1$ (see again \cref{sec:example}). For the upwind scheme discretization, this is no longer true:
\begin{itemize}
	\item \emph{Case $\tfrac{r}{w}<1$.}  In this situation,  \eqref{eq:upwind} converges to the identity map. However, unlike the periodic approximation, the limit is never achieved and only holds true in the asymptotic sense. Weak convergence in the spectra is nevertheless not achieved.
	\item \emph{Case $\tfrac{r}{w}=1$.} In this situation, \eqref{eq:upwind} is reduced to: \\
	\begin{minipage}{0.5\textwidth}
		\begin{equation} \hat{U}^{\xi_n(t)}_n =  \begin{bmatrix} 1- \gamma &   &  &  \gamma  \\
	\gamma  &   1- \gamma &   \\
	&           \ddots                     & \ddots  &\\
	&                                &   \gamma    & 1- \gamma                    \end{bmatrix}^{\frac{\xi_n(t)}{\tau(n)}}. 
	\label{eq:ulamfinitedifference}
	\end{equation}
	In the case when $0 <\gamma \leq 1$,   \eqref{eq:ulamfinitedifference} is the time evolution of  a doubly-stochastic, circulant matrix.
	 In fact, the operator can be interpreted as an Ulam approximation (see e.g. \cite{dellnitz1999approximation}) of the $\tau(n)$-map for the underlying flow:  
	 $$ T_n(x) = \left( x + \frac{1}{{\gamma} w^n} \Omega \right) \mod 1.$$
	 As illustrated in  \cref{fig:sample_figure}, the parameter $\gamma\in(0,1]$ signifies the probability of jumping to next partition interval. For the special $\gamma=1$, the operator is equivalent to a periodic approximation. 
	 Nevertheless, the eigenvalue-eigenfunction pairs of the discretized operator \eqref{eq:ulamfinitedifference}:
	   $$ \left( \hat{\K}^{\xi_n(t)}_n  v_{n,j}\right) (x) = e^{\lambda_{j,n} \xi_n(t) } v_{n,j} (x)$$
	 for $j=1,\ldots,r^n$ can be found explicitly,
	  \end{minipage}
	  \begin{minipage}{0.45\textwidth}
	  	\begin{center}
	  	\includegraphics[width=.8\textwidth, trim={2cm 2cm 0cm 2cm}, clip]{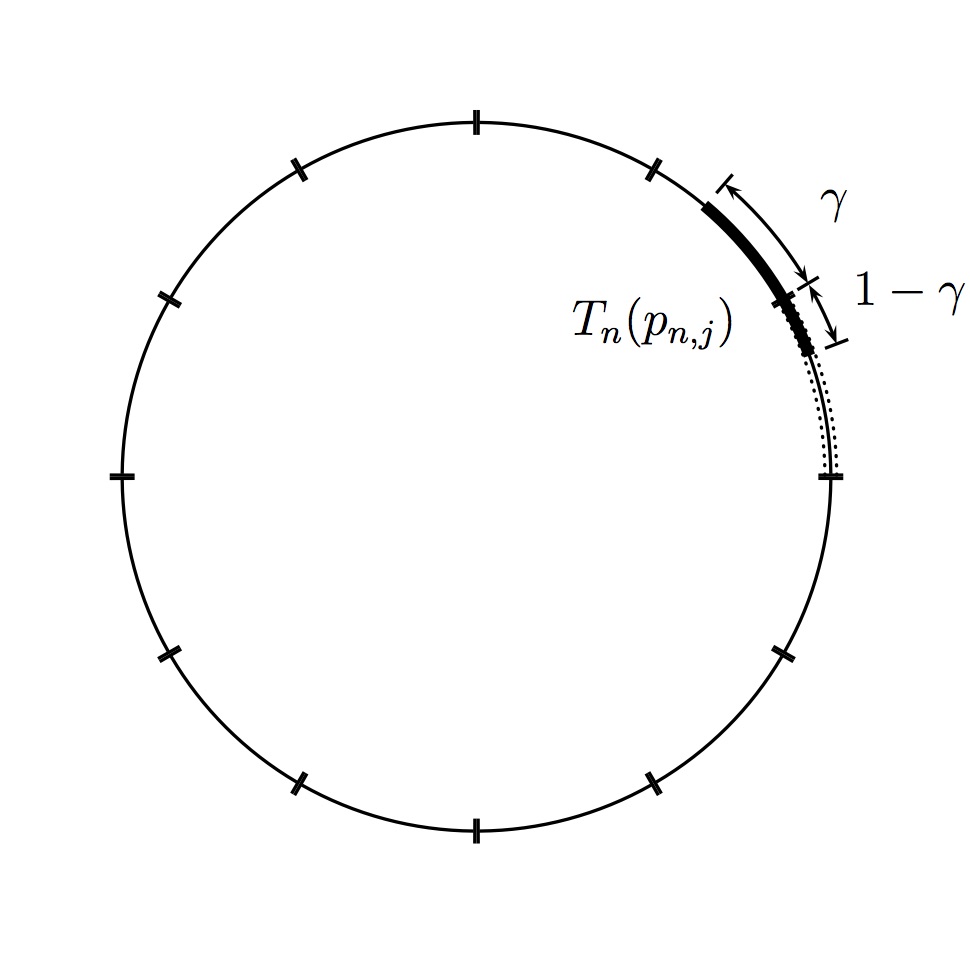}
	  	\captionof{figure}{The ``Ulam interpretation'' to the discretized operator \eqref{eq:ulamfinitedifference}.}
	  	\label{fig:sample_figure}
	  	
	  	%\caption{. } \label{fig:ulamcircle}
	  \end{center}
	  \end{minipage}
	  
	$$  \lambda_{n,j} = \left(2 \pi \kappa_n(j) \Omega\right) i  + \frac{\Omega r^n}{\gamma} \log \left(  \frac{ 1-\gamma + \gamma e^{\frac{ 2 \pi \kappa_n(j)}{r^n}  i}  }{  e^{\gamma\frac{ 2 \pi \kappa_n(j) }{r^n}  i}   }\right),\quad v_{n,j} (x)= \sum_{k=1}^{r^n} e^{\frac{2\pi \kappa_n(j)(k-1)}{r^n}   i } \chi_{p_{n,j}}(x),   $$
	where:
	$$\kappa_n(j) = \left( j-1-\frac{r^n}{2}\right)\mod r^n  -\frac{r^n}{2}.  $$
	
	In \cref{fig:ulamspectra} the location of eigenvalues are plotted for varying $\gamma>0$ and $n\in\mathbb{N}$. The eigenvalues only remain on the imaginary axis in the special case $\gamma=1$ when  \eqref{eq:ulamfinitedifference} reduces  to  a periodic approximation. 
	\begin{figure}[h!]
		\subfigure[Eigenvalues for $n=7$ and varying $\gamma>0$.]{\includegraphics[width=.37\textwidth]{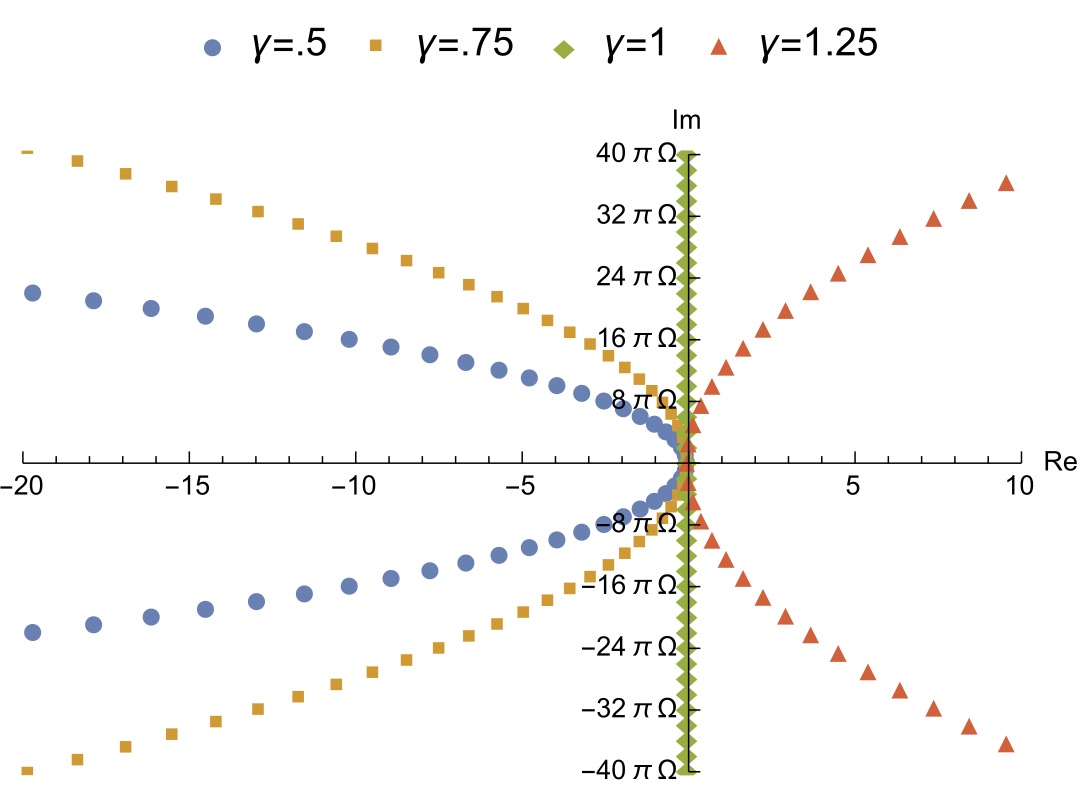}}
		\subfigure[Eigenvalues for $\gamma=0.75$ and varying $n\in\mathbb{N}$.]{\includegraphics[width=.37\textwidth]{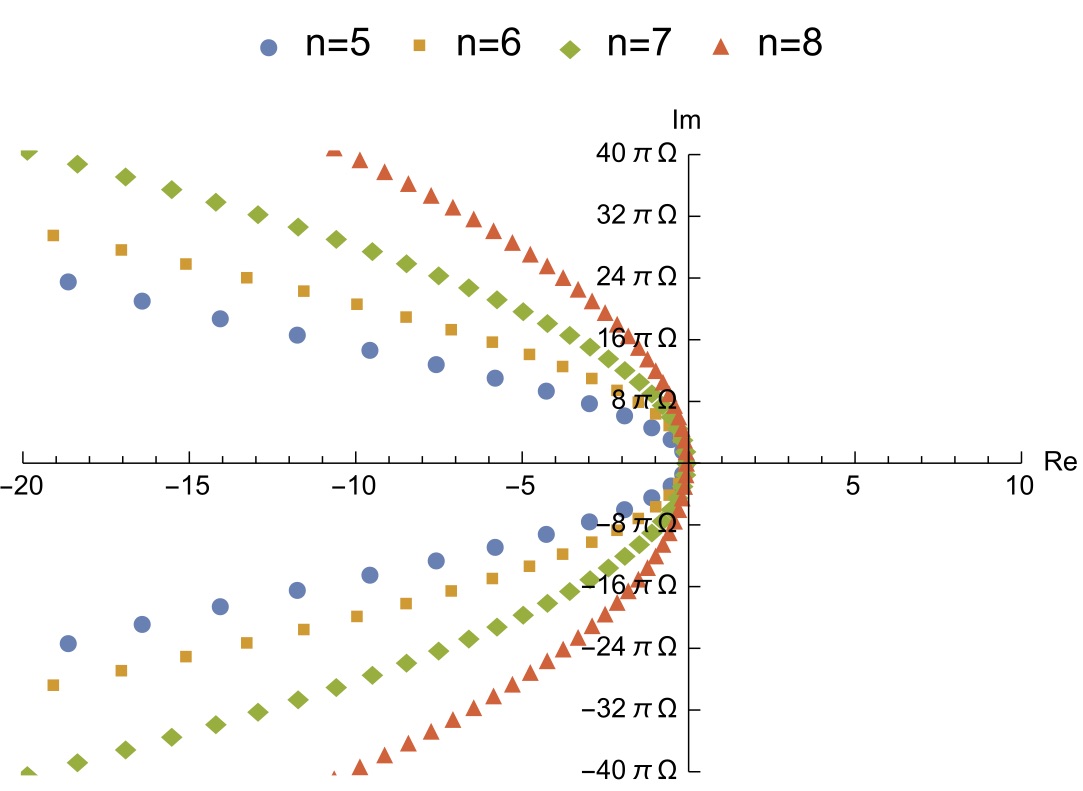}  }
		\caption{Eigenvalues of \eqref{eq:ulamfinitedifference} for $r=w=2$ and varying $\gamma>0$ and $n\in\mathbb{N}$. } \label{fig:ulamspectra}
	\end{figure}
	If $0 <\gamma \leq 1$ (the CLF condition for the upwind scheme), the eigenvalues deflect off to the left-half plane, whereas for $\gamma>1$ they deflect off to right-half plane. Although refinements on the partition do push eigenvalues corresponding to slow modes closer to the imaginary axis, the eigenvalues corresponding to fast modes always either get dissipated or amplified.
	
	\item \emph{Case $\tfrac{r}{w}>1$.}	 The entries in \eqref{eq:upwind} grow unboundedly. Hence, it is impossible to have any operator or weak spectral convergence in this scenario.
\end{itemize}

In general, statements on the spectral convergence properties of a finite diffference schemes are harder to make. The generator is typically not spatially invariant and there is no longer a specific structure to exploit other than the sparsity. The uniqueness of the periodic approximation is that the discretization preserves the unitary structure of the underlying operator. In \cite{mclachlan1999area}, McLachan indicated that periodic approximations surpress the formation of spurious oscillations in simulations. The restriction of the spectra to the imaginary axis may play a critical role here, since it prevents instabilities and artificial damping.

\section{Numerical examples} \label{sec:examples}

For symplectic flows, and volume preserving flows in general, algorithms for constructing periodic approximations are relatively straightforward. Periodic approximations can be obtained in either two ways: directly through brute fore using bipartite matching algorithms (see e.g. \cite{kloeden1997constructing,govindarajan2018approximation}), or indirectly through developing a symplectic/volume-preserving integrator that leaves a lattice invariant (i.e. symplectic lattice maps, see \cite{scovel1991symplectic}). In this section, we will examine the Koopman spectral properties of some low dimensional (i.e. $\leq 3$) Hamiltonian and volume-preserving flows.

\subsection{Hamiltonian systems}

Hamiltonian systems are defined on unbounded domains. But in many situations, the trajectories belonging to a sub-level set of energy surfaces are bounded. Hence, for that subdomain of the state-space, periodic approximations may be constructed to compute spectra. For a separable, one degree-of-freedom Hamiltonian systems, periodic approximations can be obtained readily from a symplectic lattice map (see \cite{scovel1991symplectic}). We will examine the spectra of the simple pendulum and the duffing oscillator.  

\subsubsection{Simple pendulum} Consider the simple pendulum:
\begin{equation}
\begin{bmatrix} \dot{x}_1 \\  \dot{x}_2 \end{bmatrix} = \begin{bmatrix} x_2 \\ -\sin x_1 \end{bmatrix}, \label{eq:pendulum}
\end{equation}
and let us restrict ourselves to the domain:
$$ X=\left\{ x_1\in[0,2\pi), x_2 \in \mathbb{R}:\quad   \tfrac{1}{2} x^2_2 - \cos(x_1) \leq \tfrac{1}{2}\pi^2 +1 \right\}.$$
Apart from the single eigenvalue at $\lambda=0$, it was shown in \cite{Mezic2017} that the spectra of \eqref{eq:pendulum} is fully continuous. In \cref{fig:densitypend}, we plot the spectra of the observable:
\begin{equation}
g(x_1,x_2) = \frac{1}{2} x^2_2 - \left( \cos x_1 \right). \label{eq:obspend}
\end{equation}
In \cref{fig:projpend}, spectral projections are shown for various intervals.

\subsubsection{Duffing oscillator} Consider the duffing oscillator:
\begin{equation}
\begin{bmatrix} \dot{x}_1 \\  \dot{x}_2 \end{bmatrix} = \begin{bmatrix} x_2 \\  - b x_1 - a x^3_1  \end{bmatrix}, \label{eq:duffing}
\end{equation}
restricted  to the domain:
$$ X=\left\{ x_1\in\mathbb{R}, x_2 \in \mathbb{R}:\quad   \tfrac{1}{2} x^2_2 + \tfrac{1}{2} b x^2_1 + \tfrac{1}{4} a x^4_1 \leq \tfrac{1}{2} \pi^2+ \tfrac{1}{2} b \pi^2 + \tfrac{1}{4} a \pi^4 \right\}.$$
Set $b=-1$, $a=1$ and consider the observable: 
\begin{equation} 
g(x_1,x_2) = \frac{1}{2} x^2_2 - \frac{1}{2}  x^2_1 + \frac{1}{4}  x^4_1  \label{eq:obsduffing1}
\end{equation}
In \cref{fig:densityduffing} the spectra is plotted and in \cref{fig:projduffing} projections are shown.

Let us fix $a = 1$ and vary the coefficient $b$ from negative to positive values. 
In \cref{fig:spectdensityduffing1,fig:spectdensityduffing2}, we plot the spectral density of the observable:
\begin{equation} 
g(x_1,x_2) = \frac{1}{2} x^2_2 + i \left( \frac{1}{2}  x^2_1  \right).  \label{eq:obsduffing}
\end{equation}
When $b=0$, the system undergoes a pitch-fork bifurcation. In \cref{fig:spectdensityduffing1}, the spectral density is plotted for small perturbations of $b$, i.e. around the bifurcation point. From a topological point of view, the duffing oscillator clearly undergoes a sudden transition with the birth of two new fixed points. But from a spectral sense, this transition is however smooth and unnoticable. The smooth transitation can be clarifed by the close proximity of the new fixed points during the bifurcation. Noticable changes in the spectra occur only when $b$ is modified signifanctly, as evident in  \cref{fig:spectdensityduffing2}. Since the bifurcation does not induce immediate global topological changes, from a spectral point of view, the transition will remain smooth as the Koopman framework inherently incorporates finite resolution in measurement and observation.

\iffalse
\begin{figure}[h!]
	\begin{center}
		\begin{tabular}{cc}
			\includegraphics[width=.47\textwidth]{Figures/duffingbetapos1.jpg} &  \includegraphics[width=.47\textwidth]{Figures/duffingbetapos25to41.jpg} 	\end{tabular}
	\end{center}
	\caption{Spectral projections for the observable \eqref{eq:obsduffing} on the invervals $D=[0.25, 0.25)$ (left) and $D=[2.5,4.1)$ (right), when $a=1$, $b=1$.} \label{fig:projduffing1}
\end{figure}

\begin{figure}[h!]
	\begin{center}
		\begin{tabular}{cc}
			\includegraphics[width=.47\textwidth]{Figures/duffingbetaneg1.jpg} &  \includegraphics[width=.47\textwidth]{Figures/duffingbetaneg14to31.jpg} 	\end{tabular}
	\end{center}
	\caption{Spectral projections for the observable \eqref{eq:obsduffing} on the invervals $D=[0.25, 0.25)$ (left) and $D=[1.4,3.1)$ (right), when $a=1$, $b=-1$.} \label{fig:projduffing2}
\end{figure}
\fi

\subsection{The quadruple gyre}
Next, we consider a variation to the double gyre dynamics introduced in \cite{shadden2005definition}. The quadruple gyre dynamics are described by the differential equations:
\begin{equation}
\begin{bmatrix} \dot{x}_1 \\ \dot{x}_2 \\ \dot{x}_3 
\end{bmatrix} = \begin{bmatrix}  \pi A \sin(\pi f_1(x_1,{x}_3)) \cos (\pi f_2(x_2,{x}_3)) \frac{\dd f_1(x_1,{x}_3)}{\dd x_1} \\  -\pi A \cos(\pi f_1(x_1,{x}_3)) \sin (\pi f_2(x_2,{x}_3)) \frac{\dd f_2(x_2,{x}_3)}{\dd x_2} \\ 1 \end{bmatrix} \label{eq:quadruplegyre}
\end{equation}
on the domain $X=[0,1]\times [0,1] \times [0,1)$, with:
$$  f_1(x_1,{x}_3) = 4\epsilon \sin (2\pi x_3)  {x^2_1} + (2-4 \epsilon  \sin (2\pi x_3) ) x_1, \qquad f_2(x_2,{x}_3) = 4\epsilon \sin (2\pi x_3)  {x^2_2} + (2-4 \epsilon  \sin (2\pi x_3) ) x_2.$$
The system \eqref{eq:quadruplegyre} arises from a time-periodic stream function. In fact, the variable $x_3$ is periodic and equal to the time (modulo the period). 

In \cref{fig:spectdensitygyre}, the spectral density function is plotted for the observable:
\begin{equation}
g(x_1,x_2,x_3) = i ( \sin(4\pi x_1) \sin(4\pi x_2)  )  + 4 \psi(x_1, x_2), \label{eq:obsgyre}
\end{equation}
where:
$$ \psi(x_1, x_2) = \begin{cases} \exp\left(\frac{-1}{1- \tfrac{1}{2}\sqrt{(x_1-\tfrac{1}{2})^2 + (x_2-\tfrac{1}{2})^2}  }\right) & (x_1-\tfrac{1}{2})^2 + (x_2-\tfrac{1}{2})^2 \leq \tfrac{1}{4}  \\ 0 & (x_1-\tfrac{1}{2})^2 + (x_2-\tfrac{1}{2})^2 \leq \tfrac{1}{4}  \end{cases}.$$
In our calculations, we set $A=1/(2\pi)$ and $\epsilon = 0.05$. We used a spatial partition of $700\times 700 \times 100$ and the time step was set to $\tau = 0.01$. As evident from \cref{fig:spectdensitygyre}, it appears that the quadruple gyre has a mixed spectrum for these parameters.  The location of the discrete spectra correspond to resonant frequencies of the KAM tori islands shown in  \cref{fig:poincare}. This is also noticable in \cref{fig:gyre1,fig:gyre2,fig:gyre3}, where  spectral projections are shown for certain intervals of interest.

\subsection{The Arnold-Beltrami-Childress flow}
Finally, we consider the Arnold-Beltrami-Childress (ABC) flow on the unit 3-torus, i.e. $X=[0,1)^3$. The motion is described by the differential equations:
\begin{equation}
\begin{bmatrix} \dot{x}_1 \\ \dot{x}_2 \\ \dot{x}_3 
\end{bmatrix} = \begin{bmatrix}  A \sin x_3   + C \cos x_2 \\   B \sin x_1   + A \cos x_3  \\ C \sin x_2 + B \cos x_1 \end{bmatrix}. \label{eq:ABC}
\end{equation}
For small $\tau$-values, we may approximate the flow $S^{\tau}$ of \eqref{eq:ABC}  by:
\begin{equation} 
\tilde{S}^{\tau}(x) =  \tilde{S}^{\tau/3}_1 \circ \tilde{S}^{\tau/3}_2 \circ \tilde{S}^{\tau/3}_3 (x) \label{eq:ABCmap}
\end{equation}
where:
$$ \tilde{S}^{\tau/3}_1 (x) = \begin{bmatrix} x_1 + \frac{\tau}{3} ( A \sin x_3   + C \cos x_2 ) \\ x_2 \\ x_3 \end{bmatrix},\quad \tilde{S}^{\tau/3}_1 (x) = \begin{bmatrix} x_1 \\ x_2 + \frac{\tau}{3} ( B \sin x_1   + A \cos x_3 )\\ x_3 \end{bmatrix},$$
$$\tilde{S}^{\tau/3}_1 (x) = \begin{bmatrix} x_1 \\ x_2 \\ x_3 +  \frac{\tau}{3}(C \sin x_2 + B \cos x_1 )\end{bmatrix}.  $$
This approximate volume-preserving $\tau$-map is called the ABC map \cite{Feingold1988} and is the composition of three shear maps. A periodic approximation of \eqref{eq:ABCmap} is obtained readily by periodically approximating each of the shear maps separately. In \cref{fig:spectdensityABC}, we plot the spectral density function for the observable:
\begin{equation}
g(x_1,x_2,x_3) =   \exp(4\pi i x_2) + 2 \exp(6 \pi i x_1) +  \exp(2 \pi i x_3)  \label{eq:obsABC}
\end{equation}
For our calculations, we set $A = \sqrt{3}/(2\pi)$, $B = \sqrt{2}/(2\pi)$, $C = 1/(2\pi)$. We used a spatial partition of $400\times 400 \times 400$ and the time step was set to $\tau = 0.025$.  In \cref{fig:projABC1,fig:projABC2}, spectral projection are shown onto various intervals.

\section{Conclusions} \label{sec:conclusions}
We generalized the concept of periodic approximations to measure-preserving flows. The additional time discretization which had to be dealt with required special care. An asymptotic condition was established between the time-discretization of the flow and the spatial discretization of the periodic approximation, so that weak convergence of the spectra will occur in the limit. Effectively, this condition necessitates that the spatial refinements must occur at a faster rate than the temporal refinements. It is interesting to note that this requirement is opposite to what the CLF condition dictates for stability of finite difference schemes.

\bibliographystyle{amsplain}
\bibliography{references}

\begin{figure}[b!]
	\includegraphics[width=.7\textwidth]{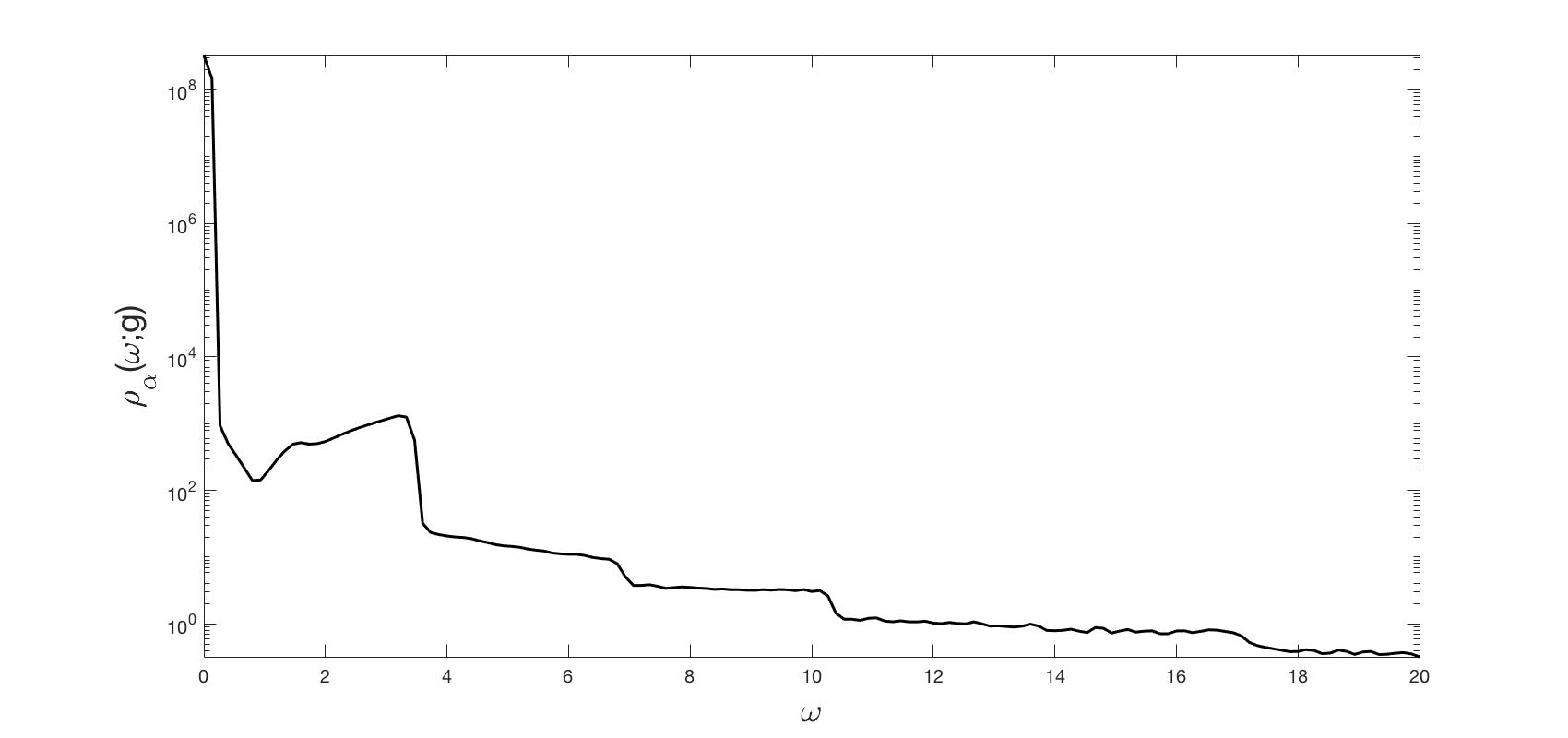}
	\caption{The spectral density function ($\alpha=0.1$) for the observable \eqref{eq:obspend}. } \label{fig:densitypend}
	\begin{center}
		\begin{tabular}{cc}
			\includegraphics[width=.47\textwidth]{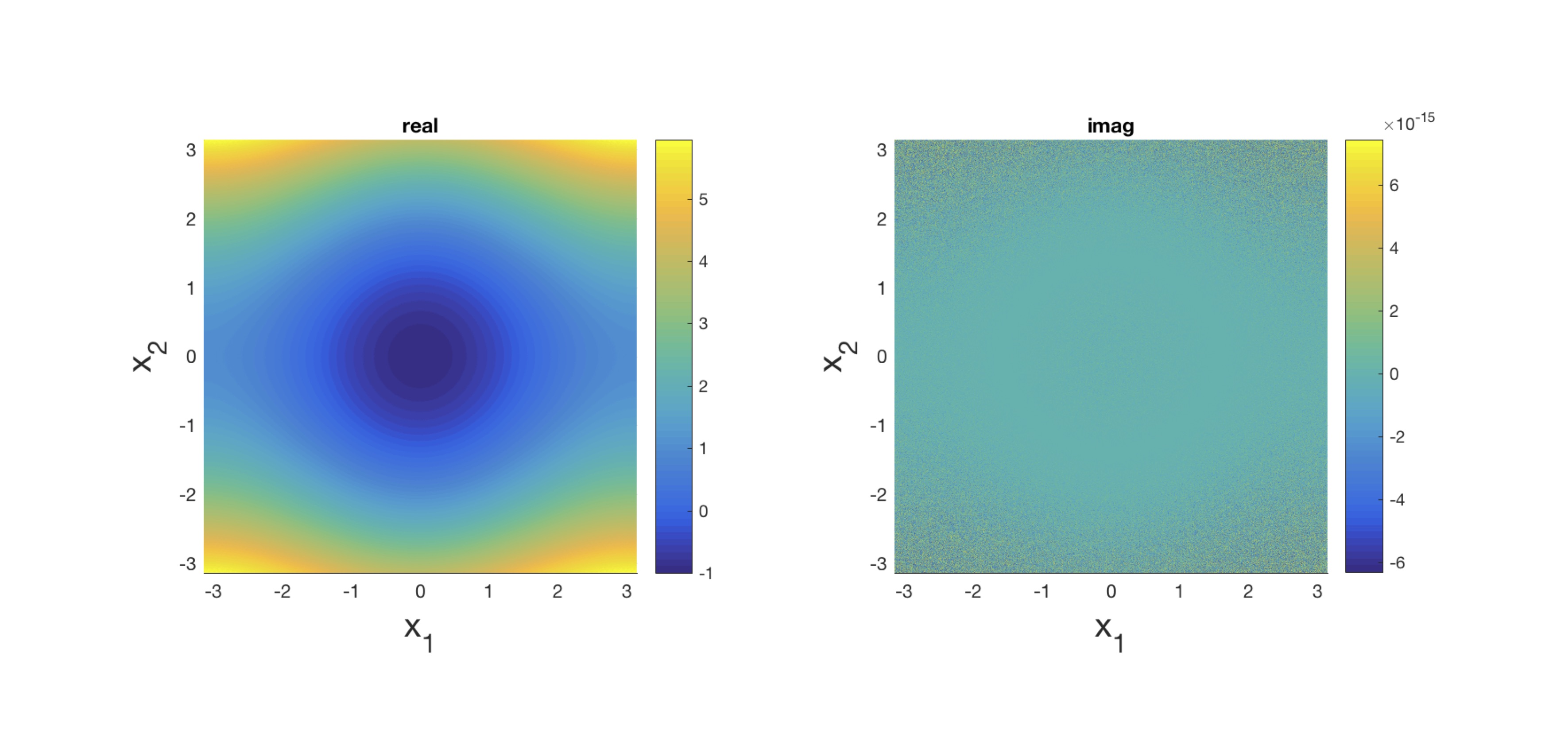} &  \includegraphics[width=.47\textwidth]{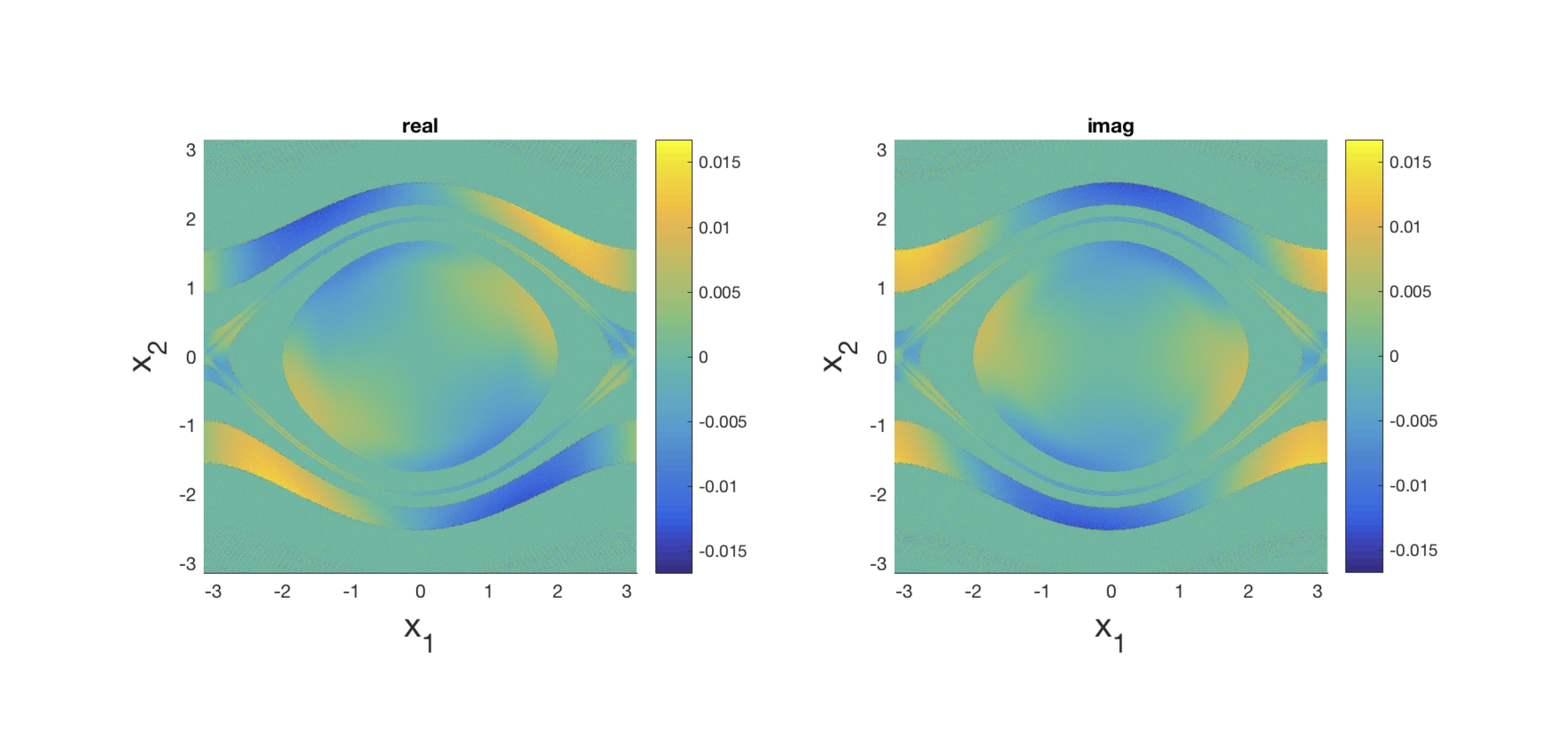} \\
			\includegraphics[width=.47\textwidth]{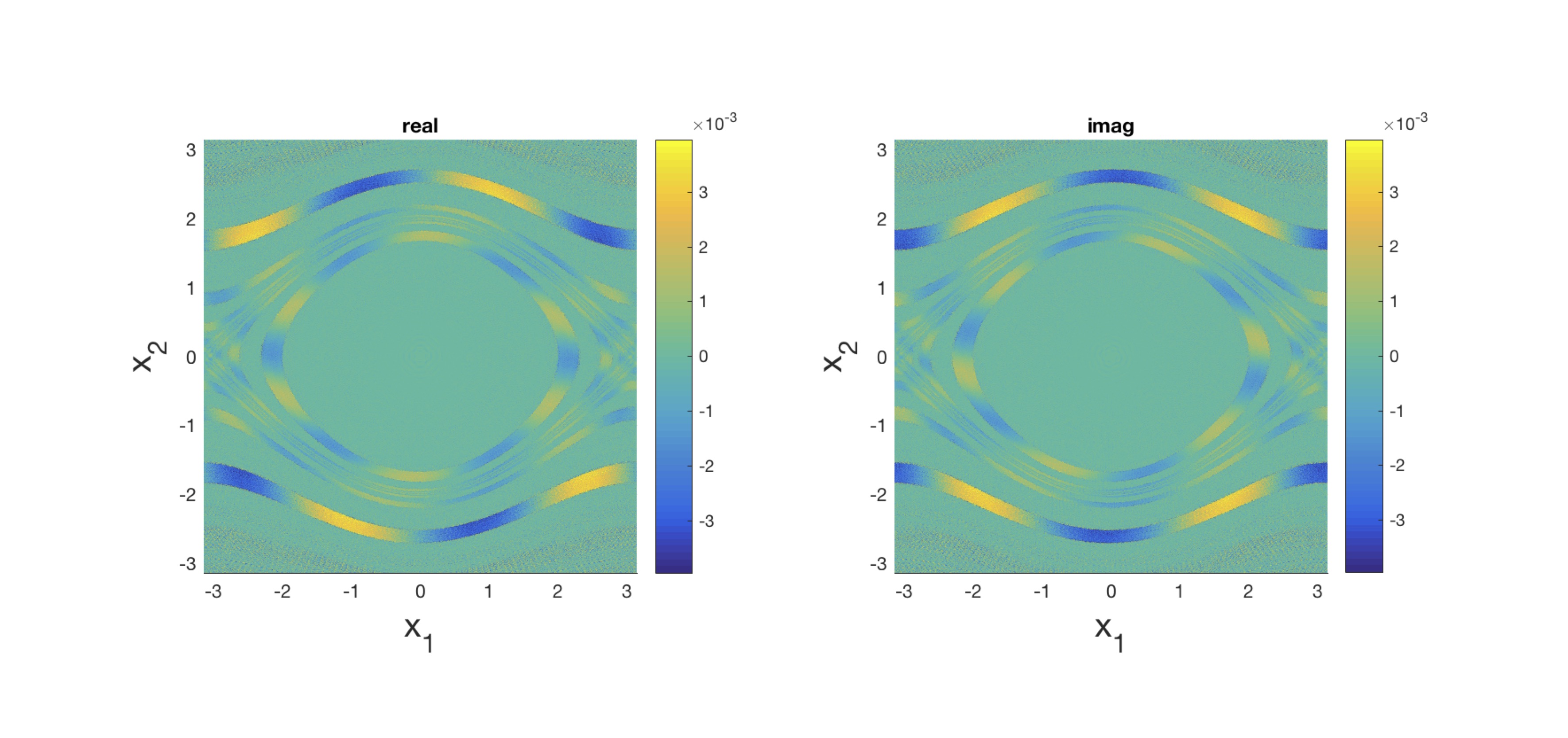} & 
			\includegraphics[width=.47\textwidth]{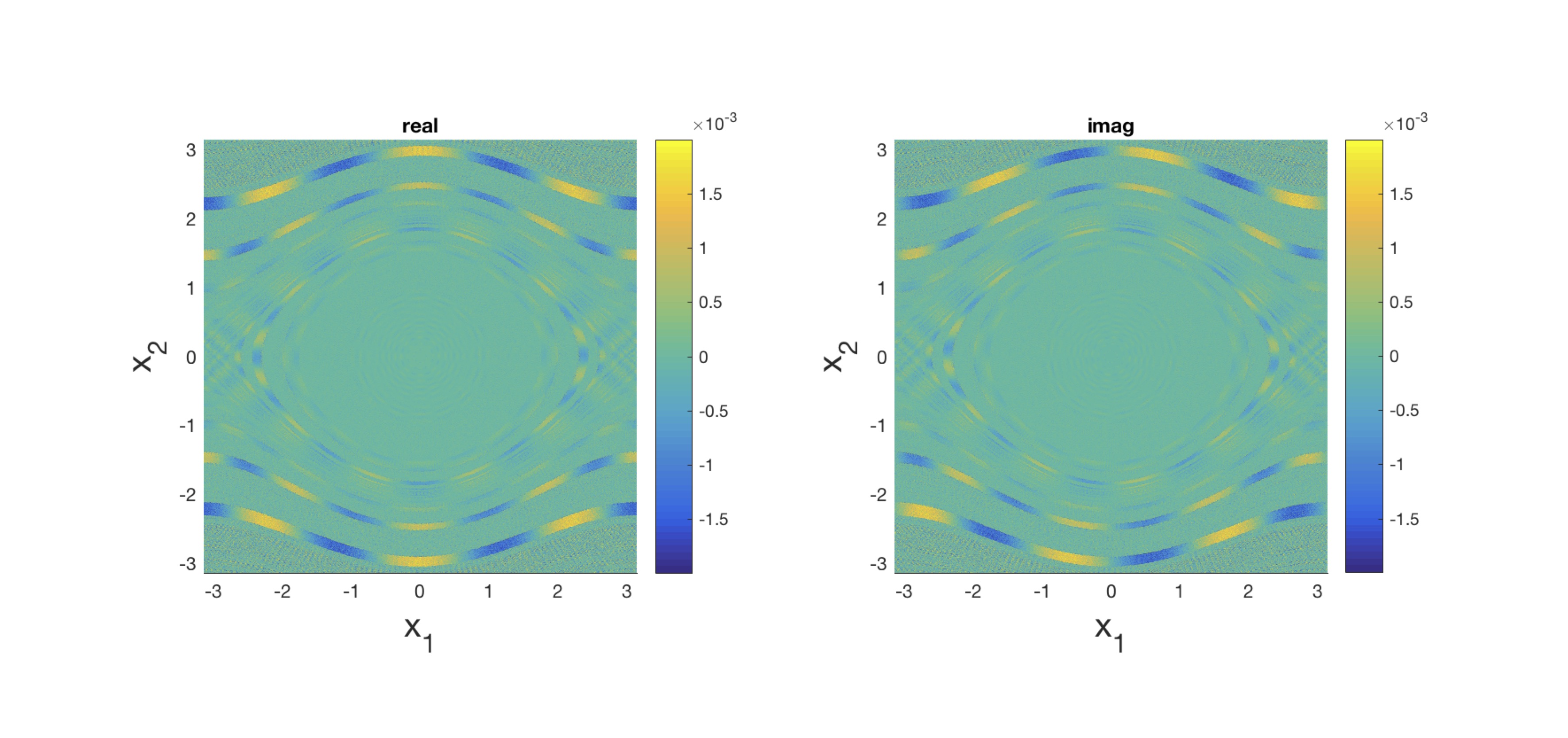} 
		\end{tabular}
	\end{center}
	\caption{Spectral projections for the observable \eqref{eq:obspend} on the invervals $D=[-0.3,0.3)$ (top-left), $D=[1.5,2.0)$ (top-right), $D=[4.0,4.5)$ (bottom-left), and $D=[7.5,8.0)$ (bottom-right).     } \label{fig:projpend}
\end{figure}

\begin{figure}[b!]
	\includegraphics[width=.7\textwidth]{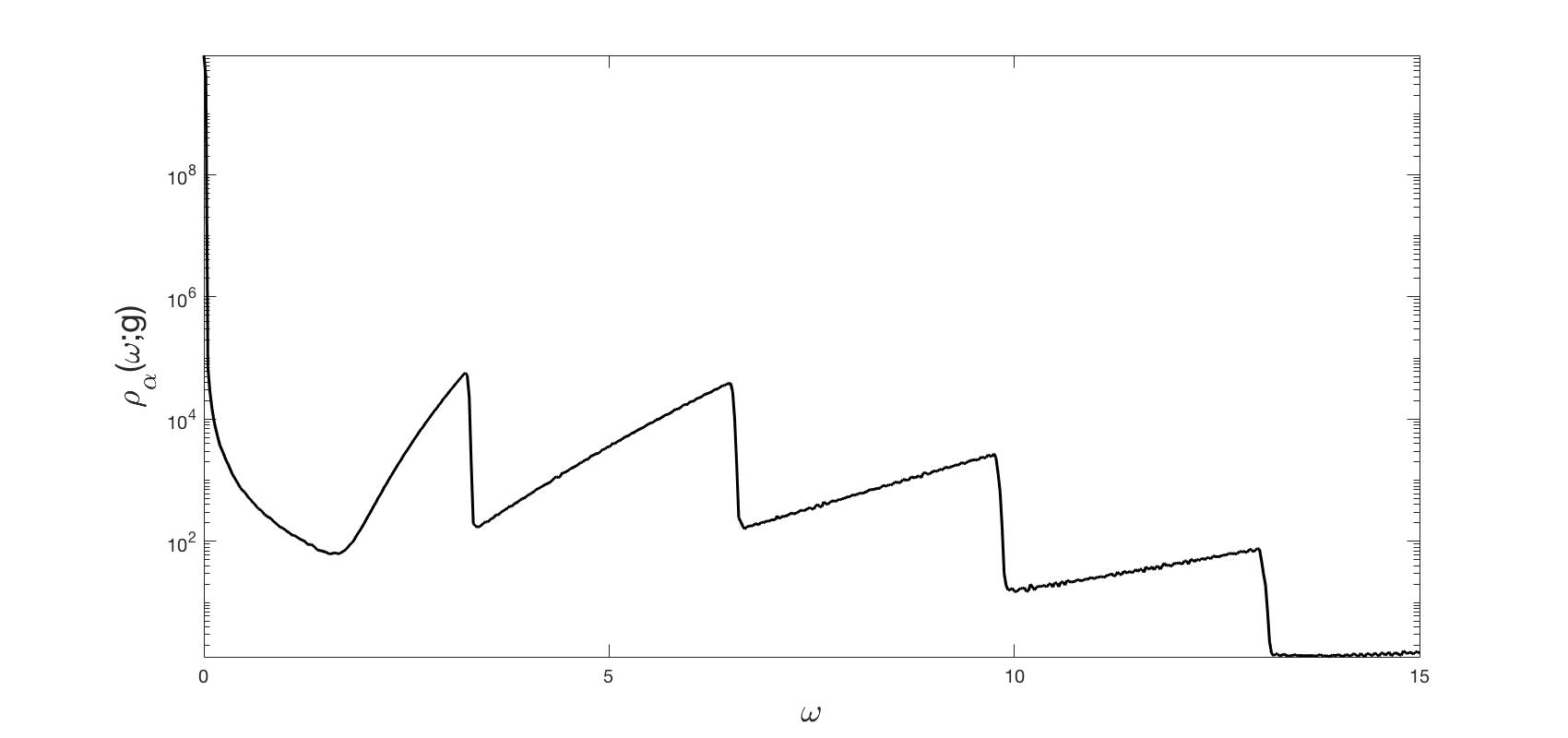}
	\caption{The spectral density function ($\alpha=0.1$) for the observable \eqref{eq:obsduffing1}. } \label{fig:densityduffing}
	\begin{center}
		\begin{tabular}{cc}
			\includegraphics[width=.47\textwidth]{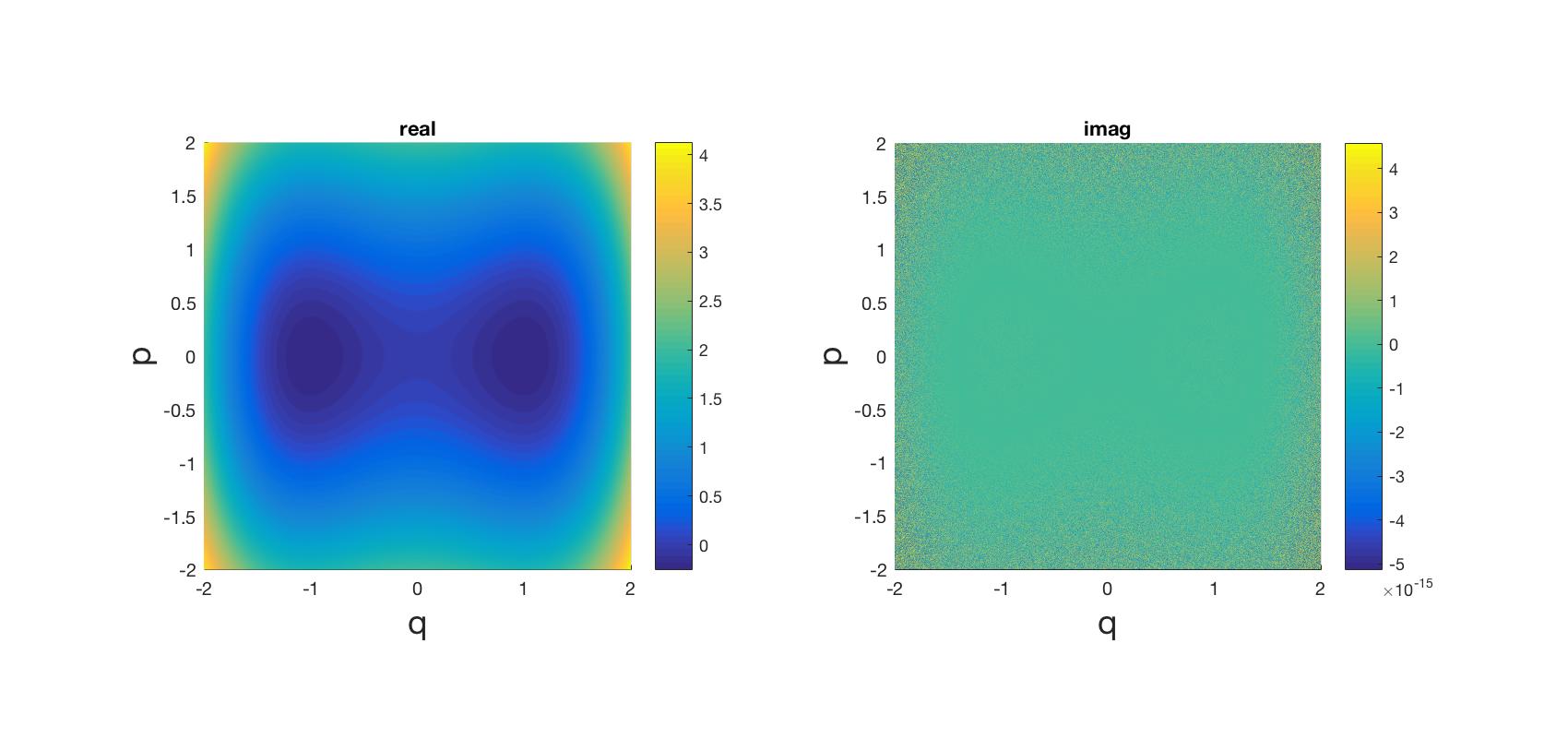} &  \includegraphics[width=.47\textwidth]{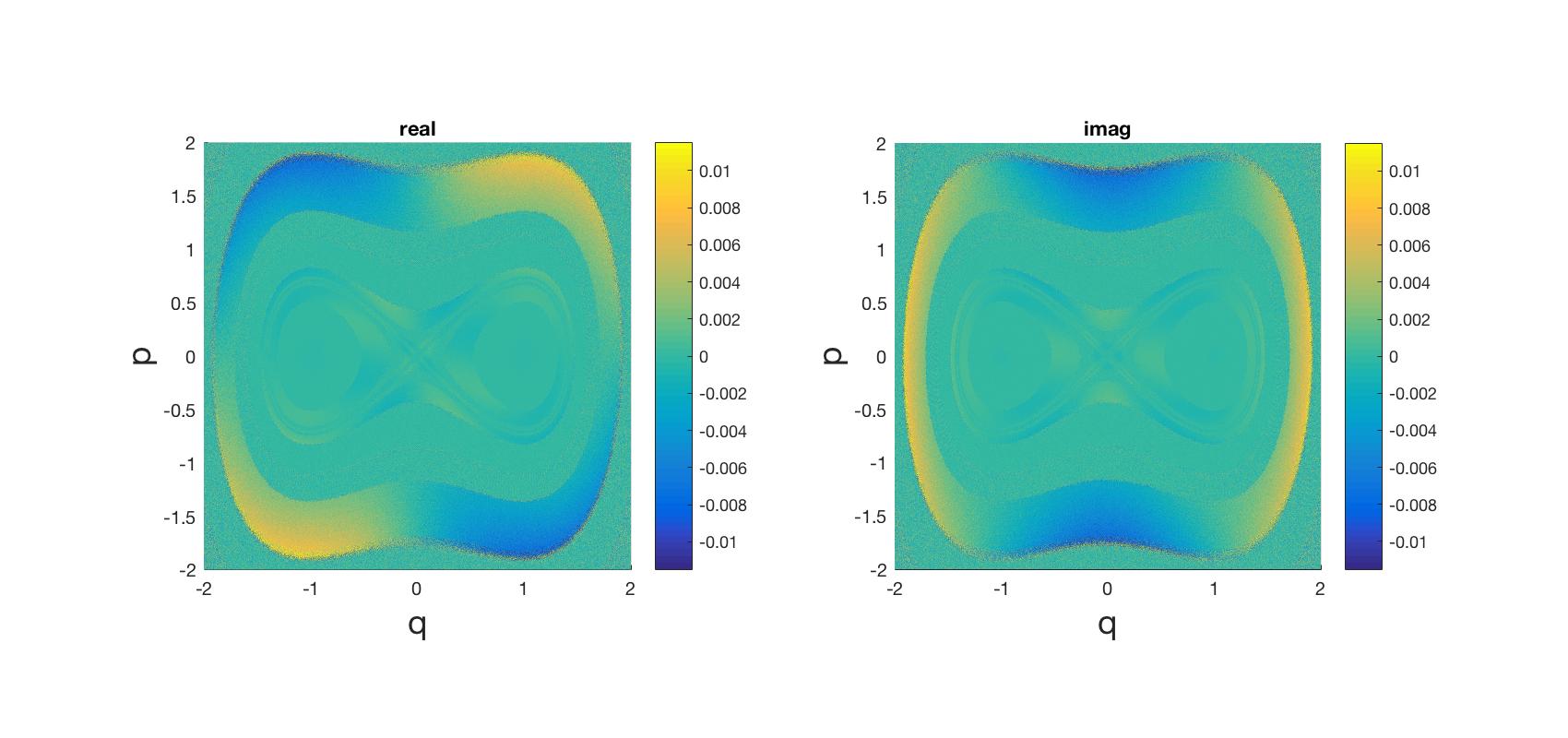} \\
			\includegraphics[width=.47\textwidth]{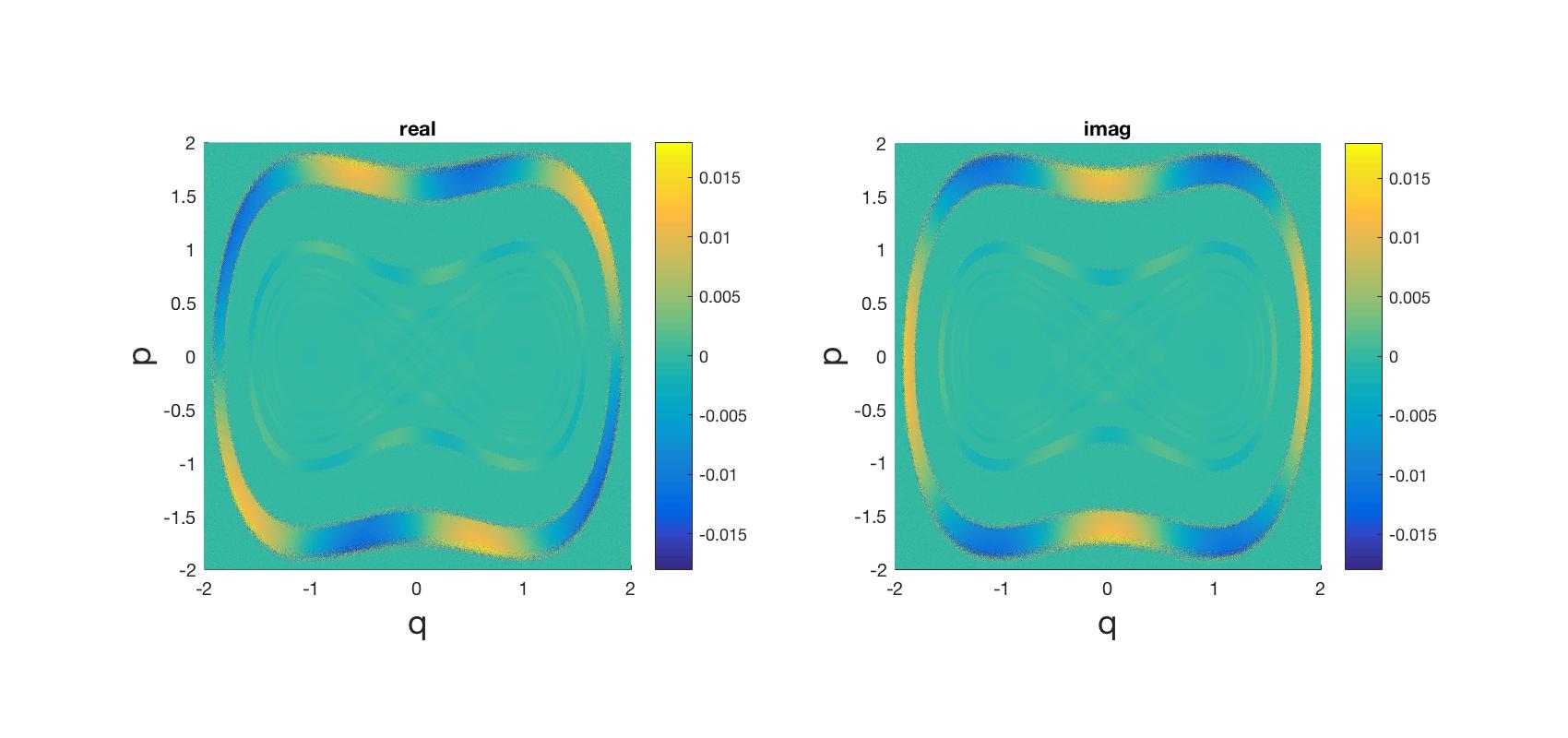} & \includegraphics[width=.47\textwidth]{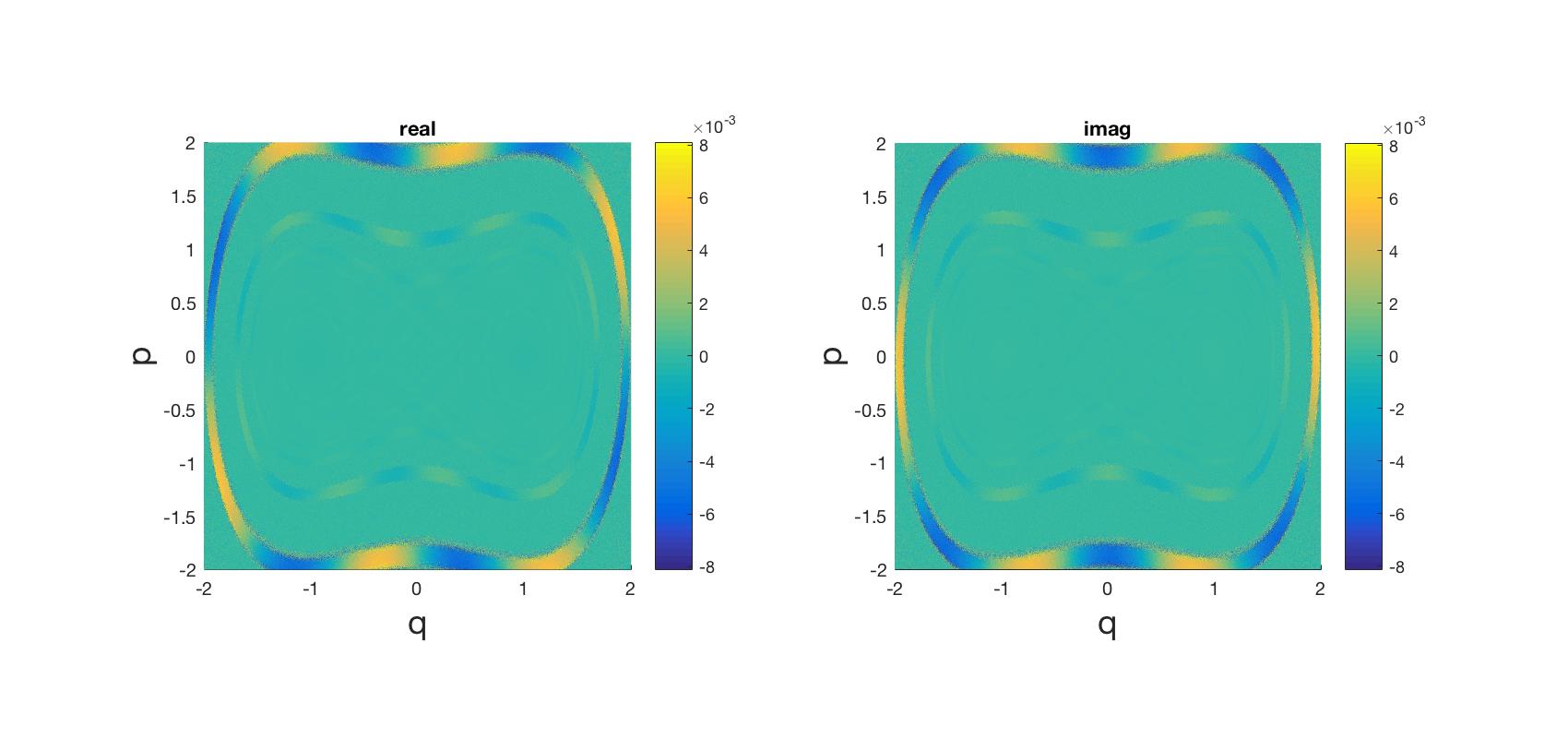} 
		\end{tabular}
	\end{center}
	\caption{Spectral projections for the observable \eqref{eq:obsduffing1} on the invervals $D=[-0.3,0.3)$ (top-left), $D=[2,2.5)$ (top-right), $D=[4.5,5)$ (bottom-left), and $D=[7.5,8.0)$ (bottom-right).     } \label{fig:projduffing}
\end{figure}

\begin{figure}[b!]
	
	\includegraphics[width=.7\textwidth]{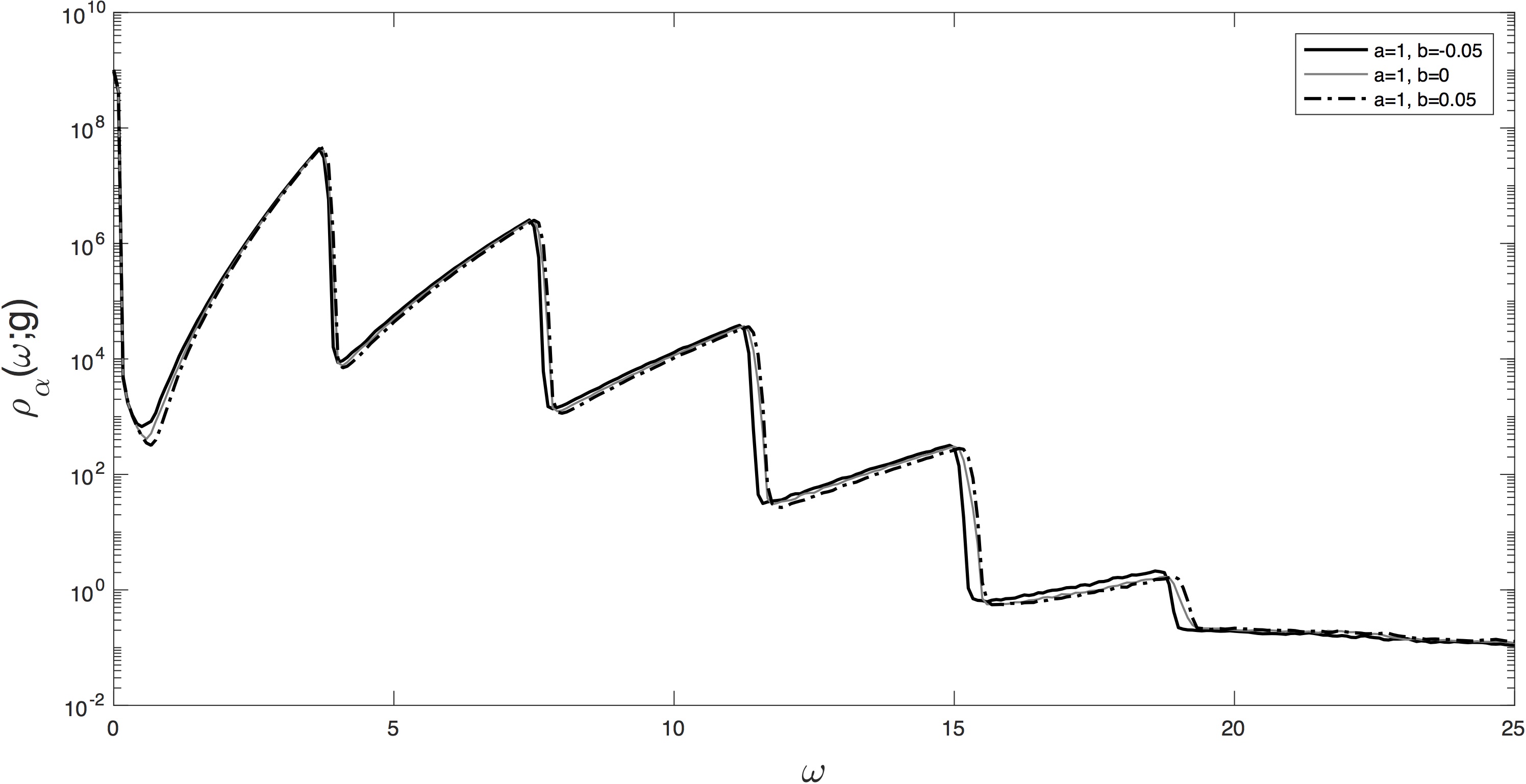}
	\caption{The spectral density function ($\alpha=0.1$) for the observable \eqref{eq:obsduffing}. } \label{fig:spectdensityduffing1}
	
	\includegraphics[width=.7\textwidth]{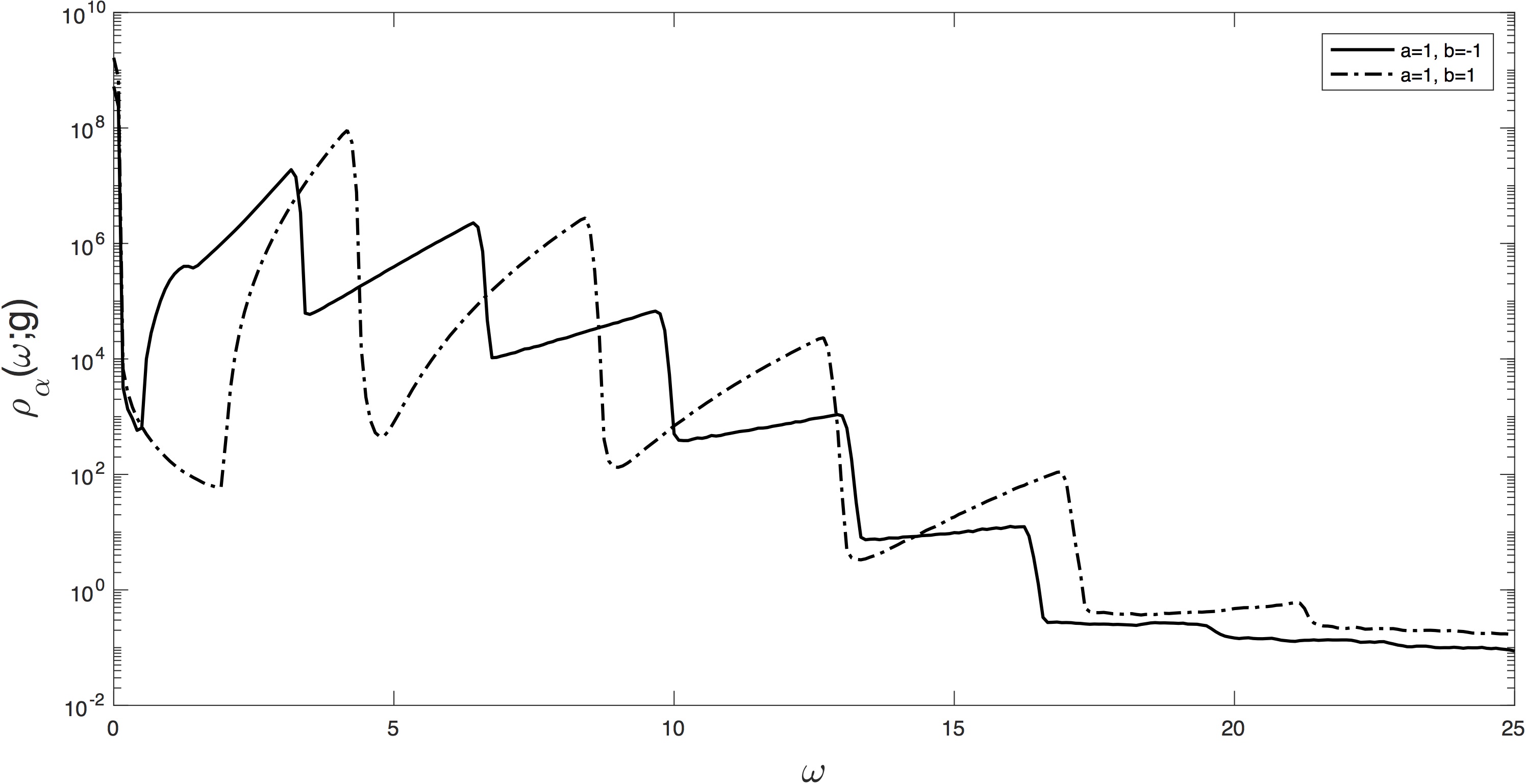} 
	\caption{The spectral density function ($\alpha=0.1$) for the observable \eqref{eq:obsduffing}. } \label{fig:spectdensityduffing2}
\end{figure}

\begin{figure}[b!]
	\begin{center}
		\includegraphics[width=0.7\textwidth]{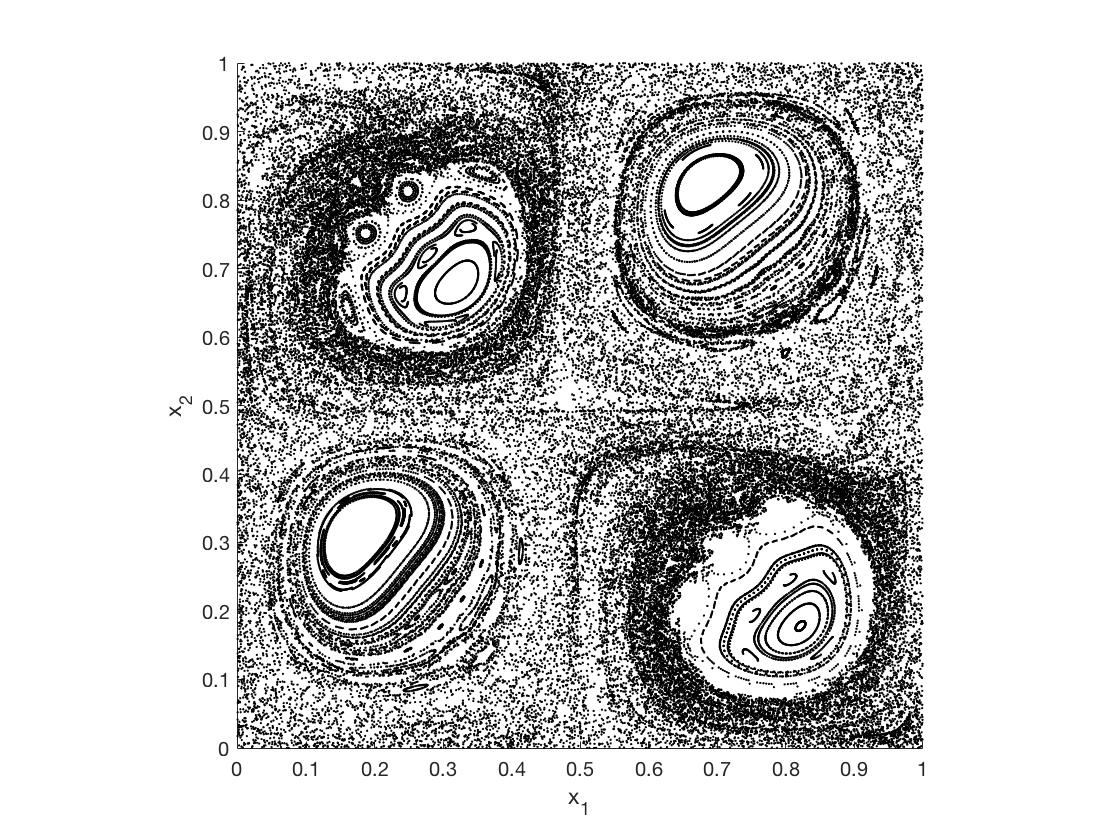} 
	\end{center}
	\caption{Cross-section of trajectories at $x_3=0$ for the quadruple gyre with $A=1/(2\pi)$ and $\epsilon = 0.05$. Clearly noticable in the plot are the KAM tori islands aong with the chaotic region.} \label{fig:poincare}
\end{figure}

\begin{figure}[b!]
	\includegraphics[width=.8\textwidth]{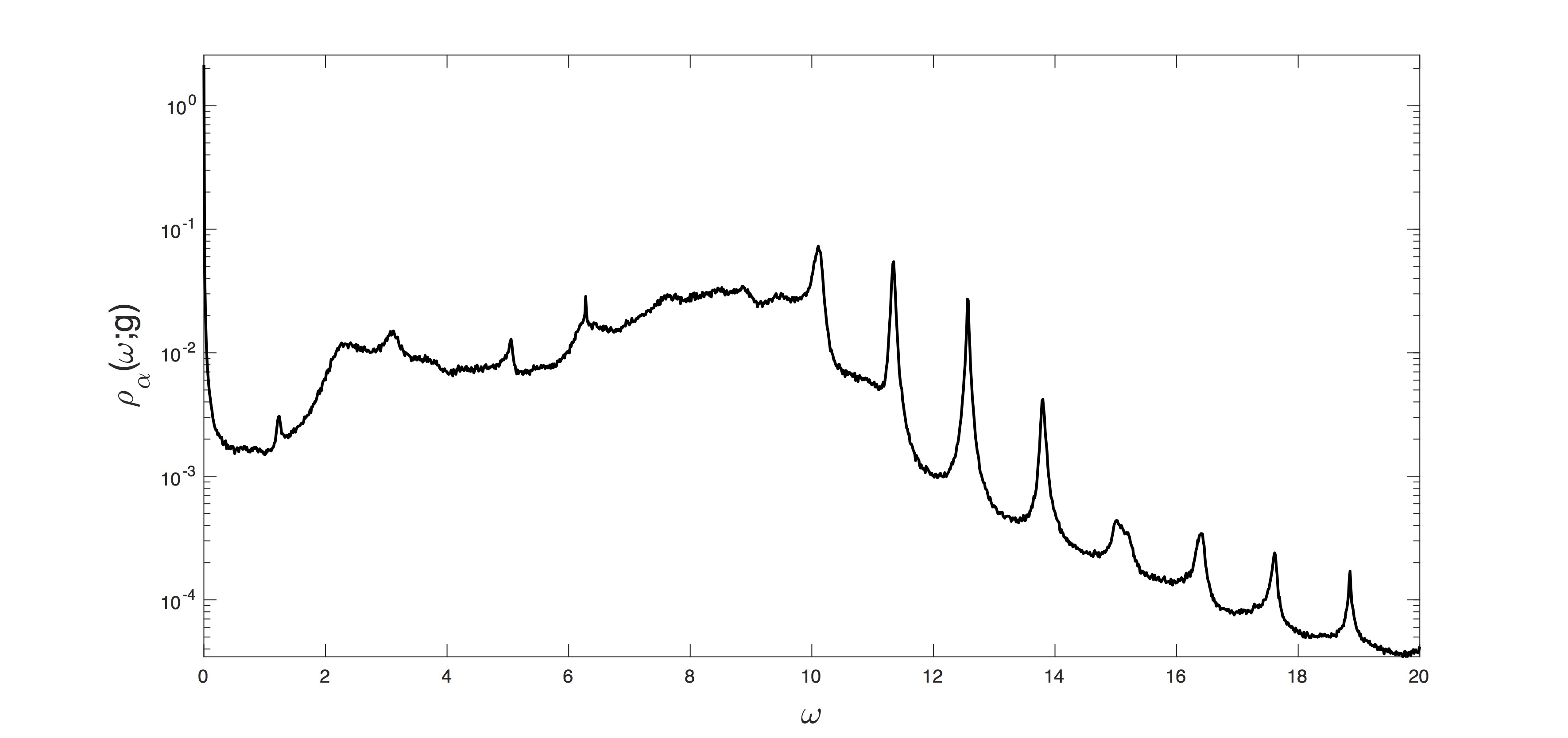} 
	\caption{The spectral density function ($\alpha=0.01$) for the observable \eqref{eq:obsgyre}.} \label{fig:spectdensitygyre}
\end{figure}

\begin{figure}[b!]
	\begin{center}
		\subfigure[$x_3 = 0$]{\includegraphics[width=.49\textwidth]{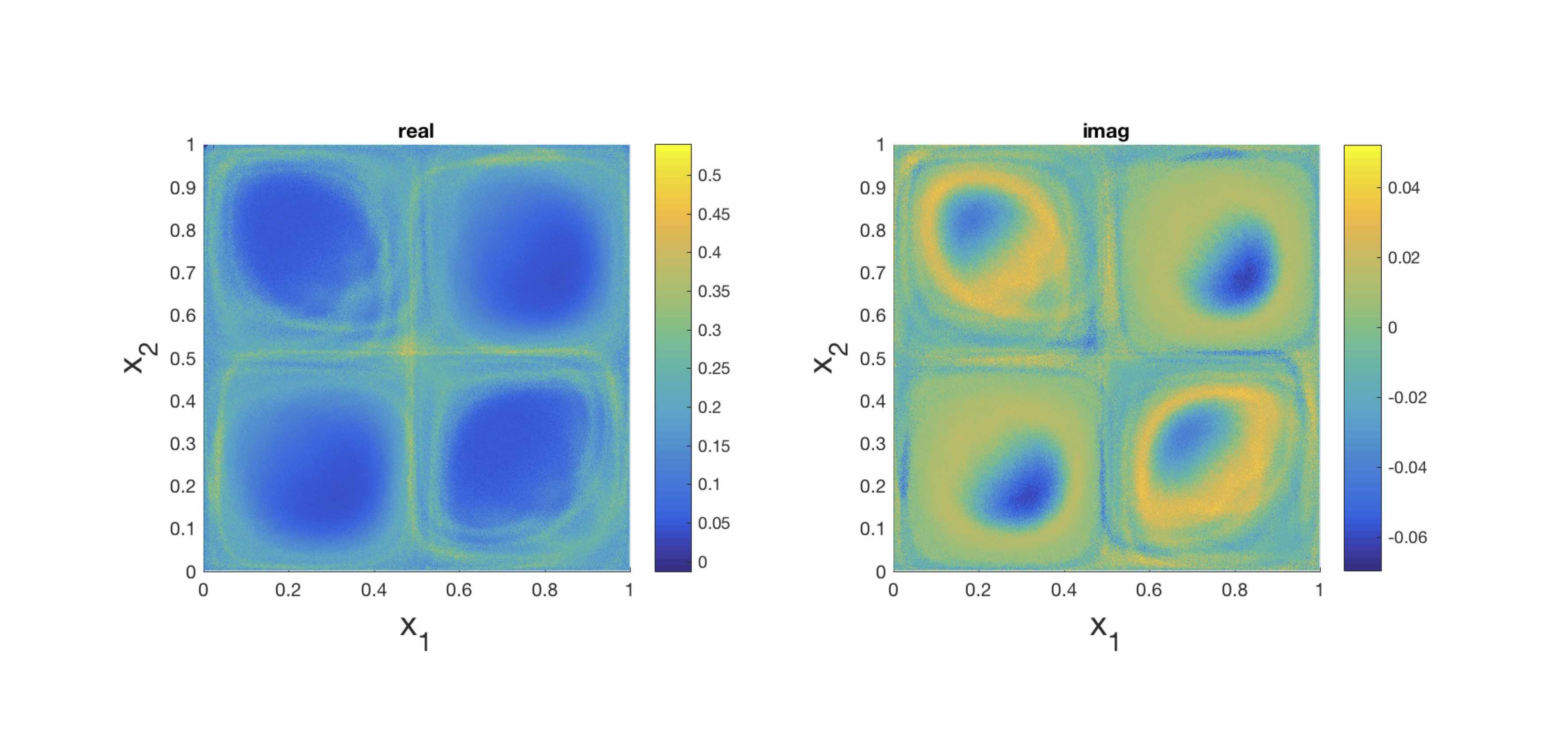} }  \subfigure[$x_3 = 0.25$]{\includegraphics[width=.49\textwidth]{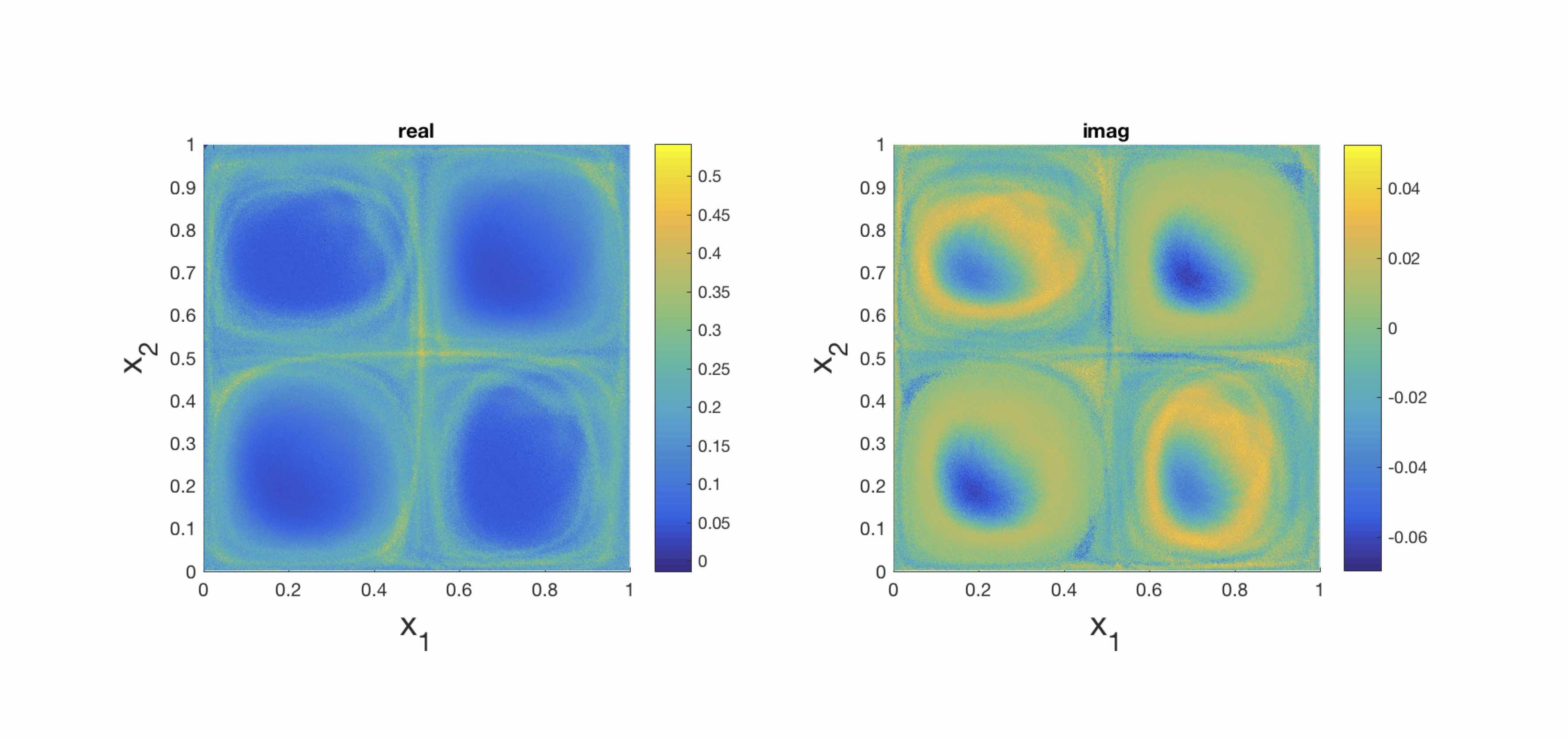} }  \\
		\subfigure[$x_3 = 0.5$]{\includegraphics[width=.49\textwidth]{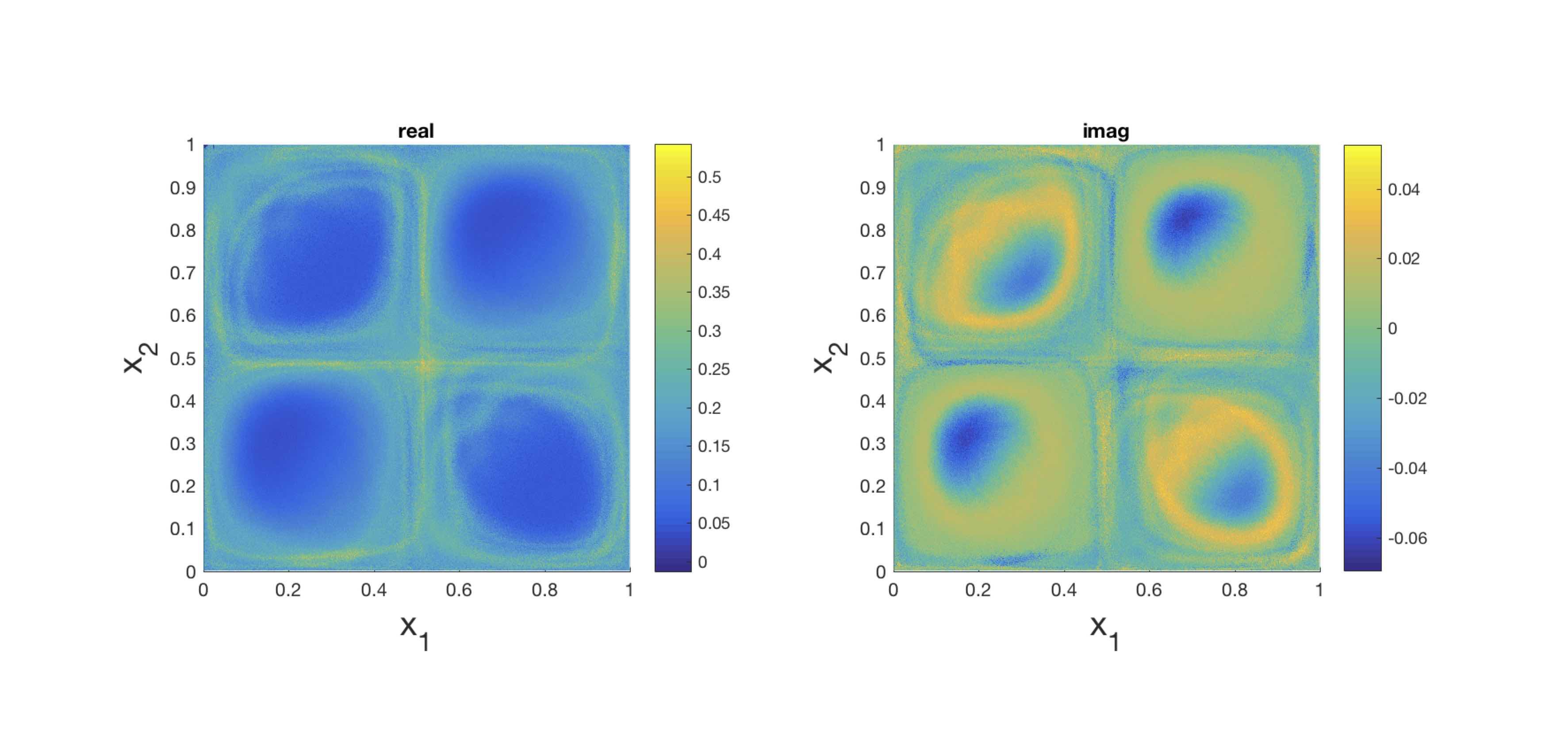} }  
		\subfigure[$x_3 = 0.75$]{\includegraphics[width=.49\textwidth]{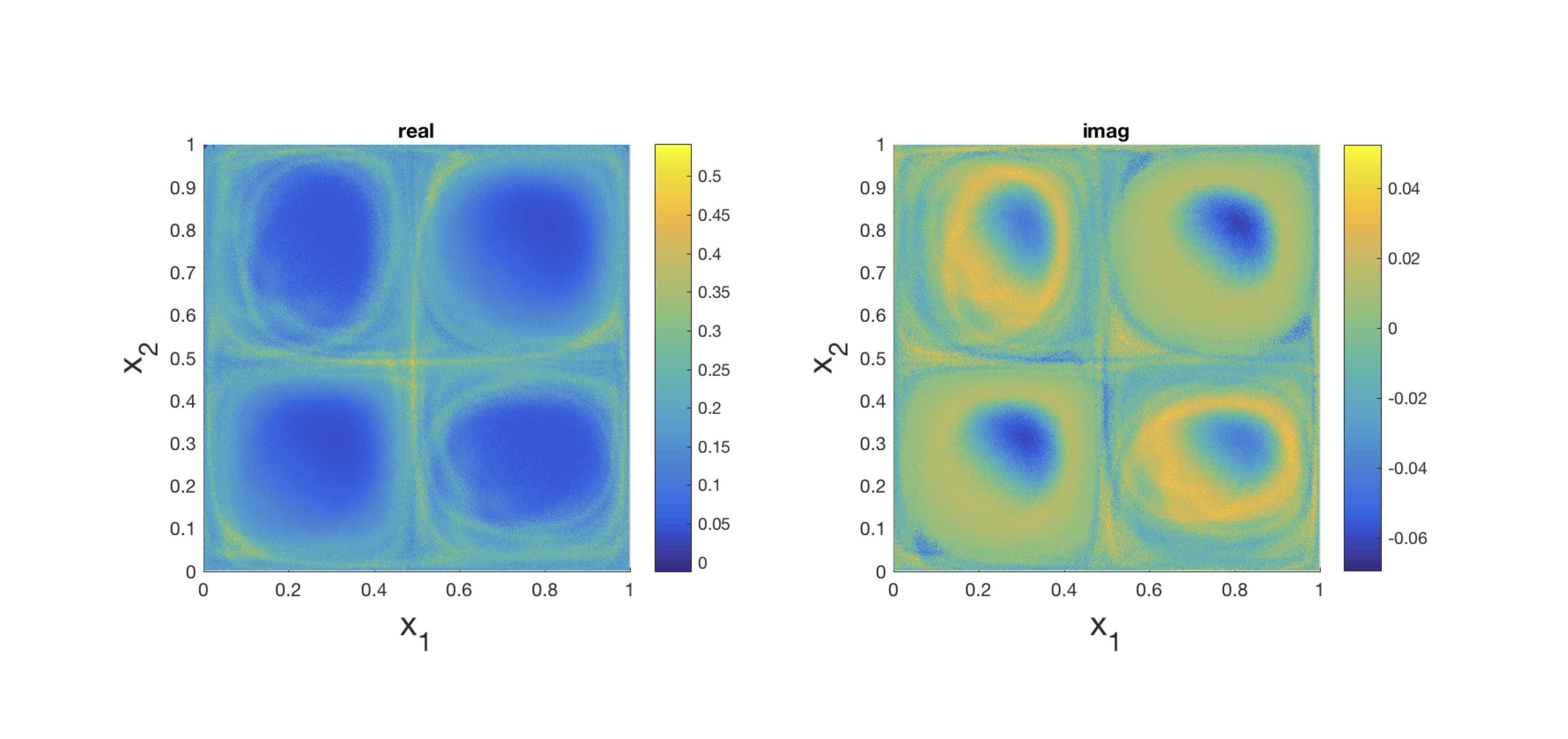} } 
	\end{center}
	\caption{Spectral projection of the observable \eqref{eq:obsgyre} onto the interval $D=[-0.4,0.4)$. } \label{fig:gyre1}
\end{figure}

\begin{figure}[b!]
	\begin{center}
		\subfigure[$x_3 = 0$]{\includegraphics[width=.49\textwidth]{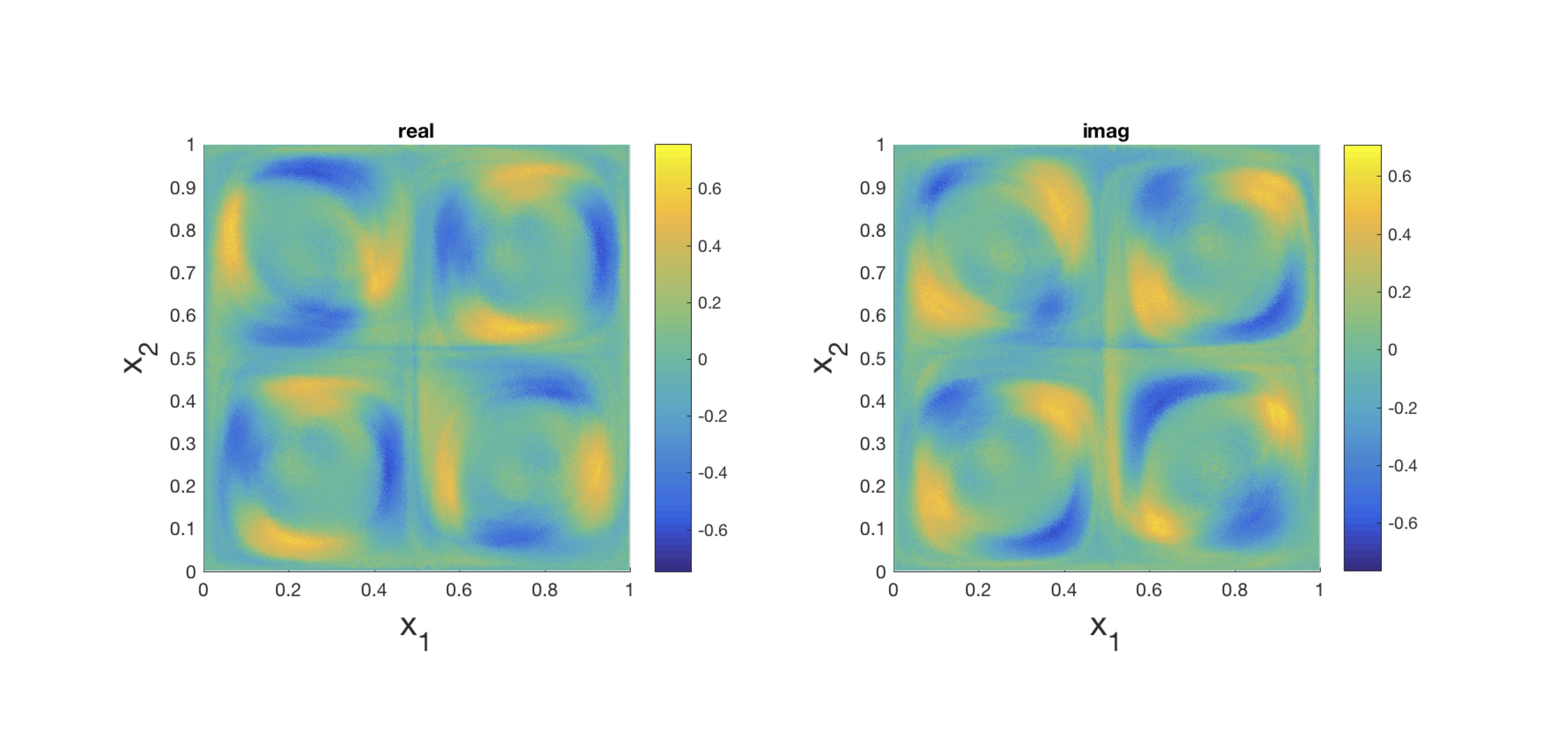} }  \subfigure[$x_3 = 0.25$]{\includegraphics[width=.49\textwidth]{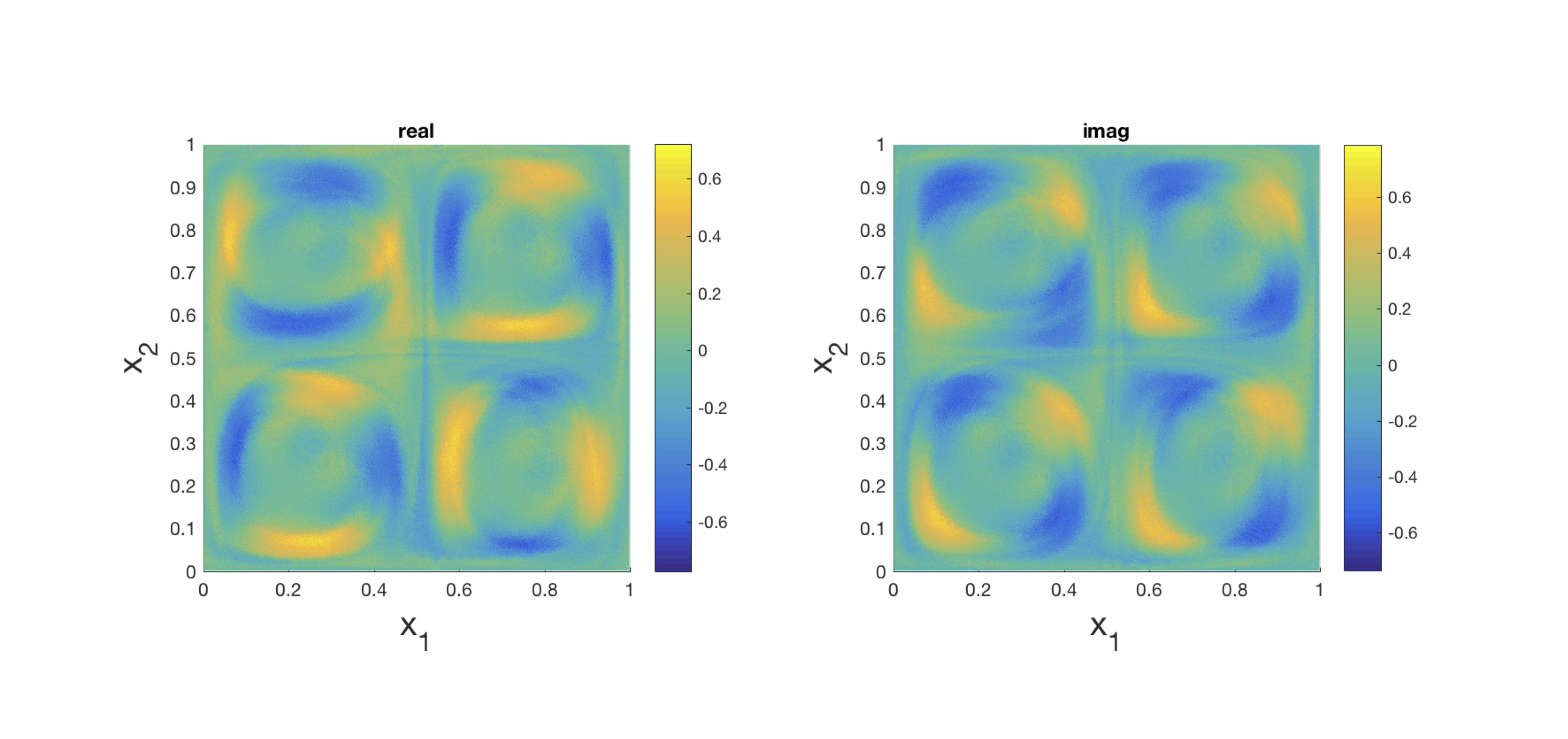} }  \\
		\subfigure[$x_3 = 0.5$]{\includegraphics[width=.49\textwidth]{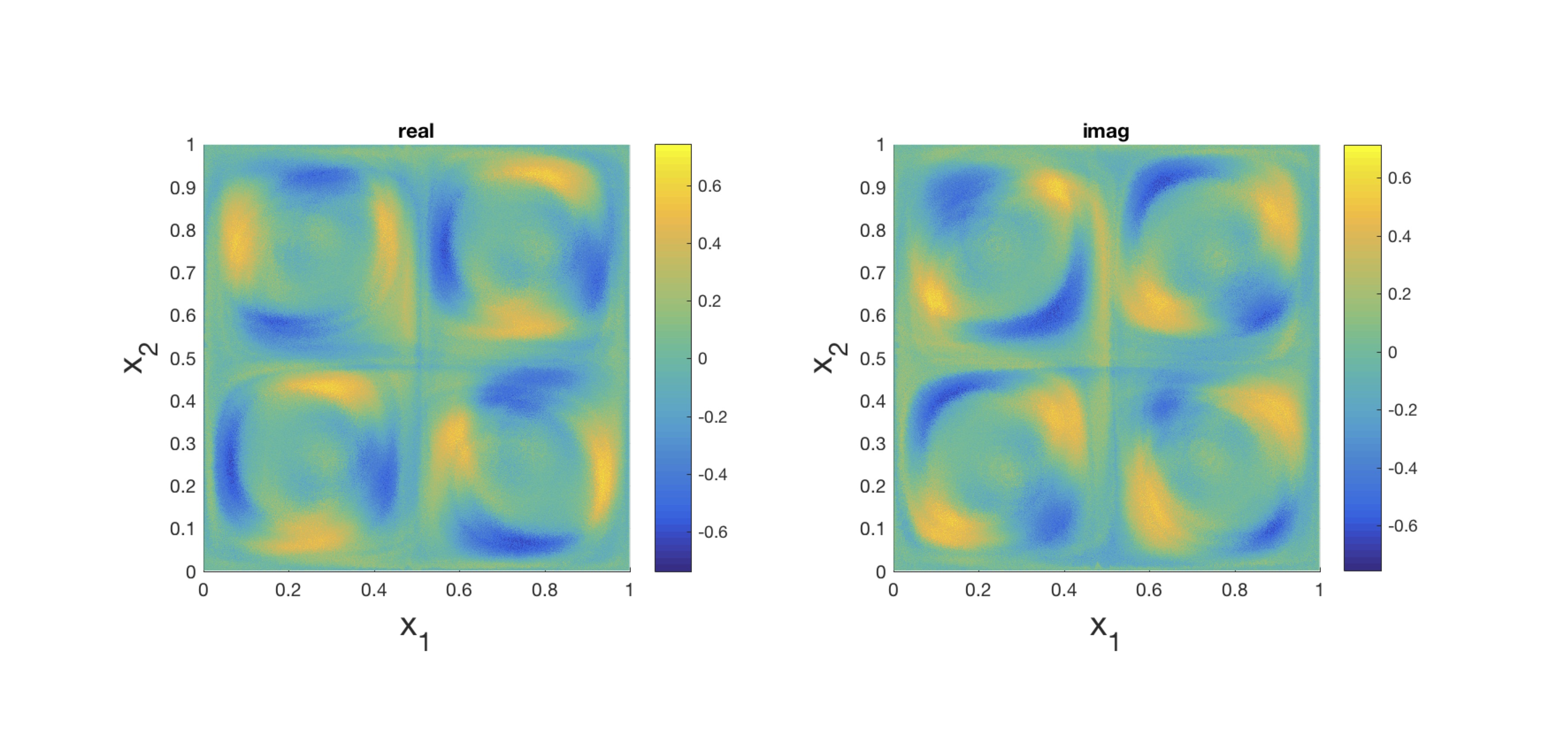} }  
		\subfigure[$x_3 = 0.75$]{\includegraphics[width=.49\textwidth]{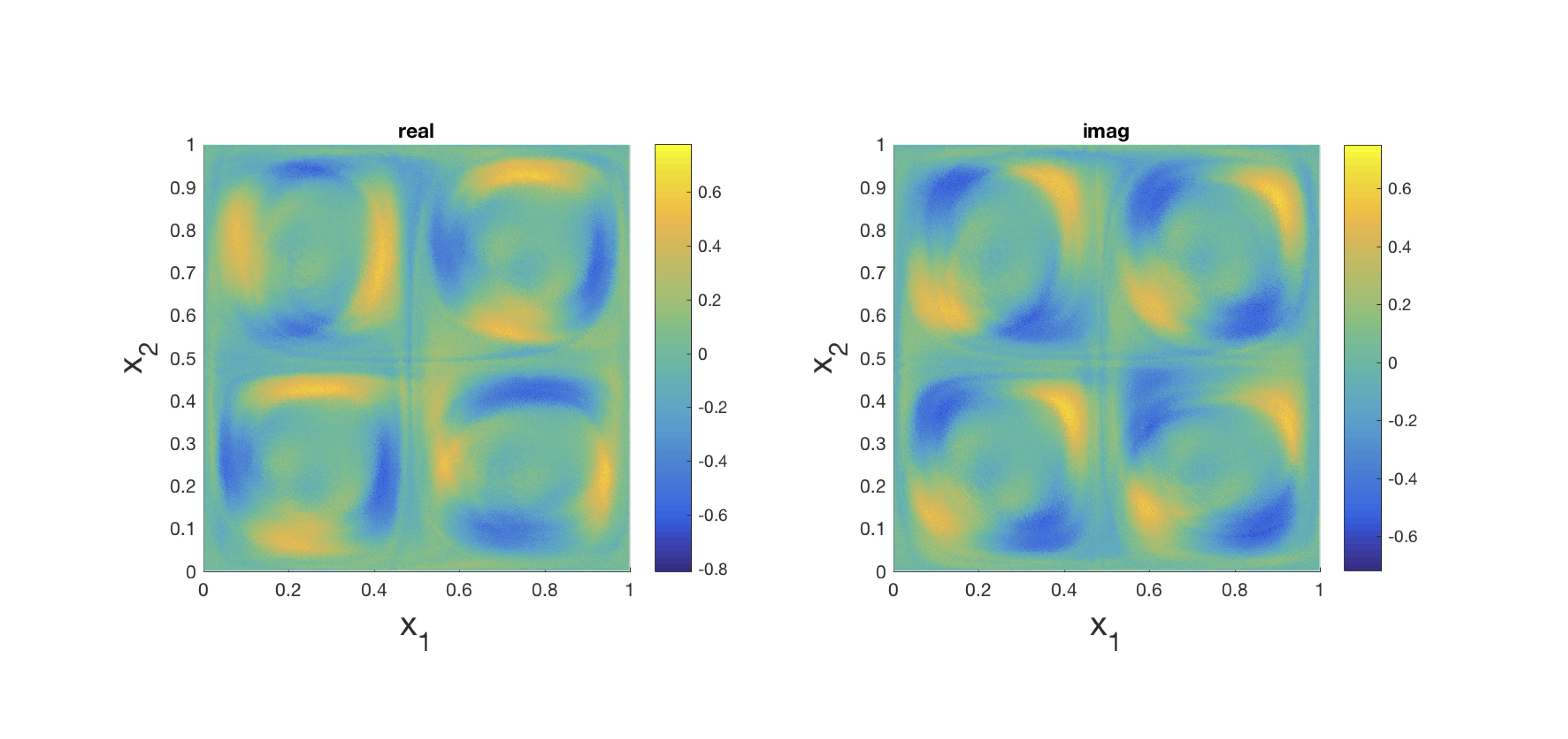} } 
	\end{center}
	\caption{Spectral projection of the observable \eqref{eq:obsgyre} onto the interval $D=[6.5,10.0)$. } \label{fig:gyre2}
\end{figure}

\begin{figure}[b!]
	\begin{center}
		\subfigure[$x_3 = 0$]{\includegraphics[width=.49\textwidth]{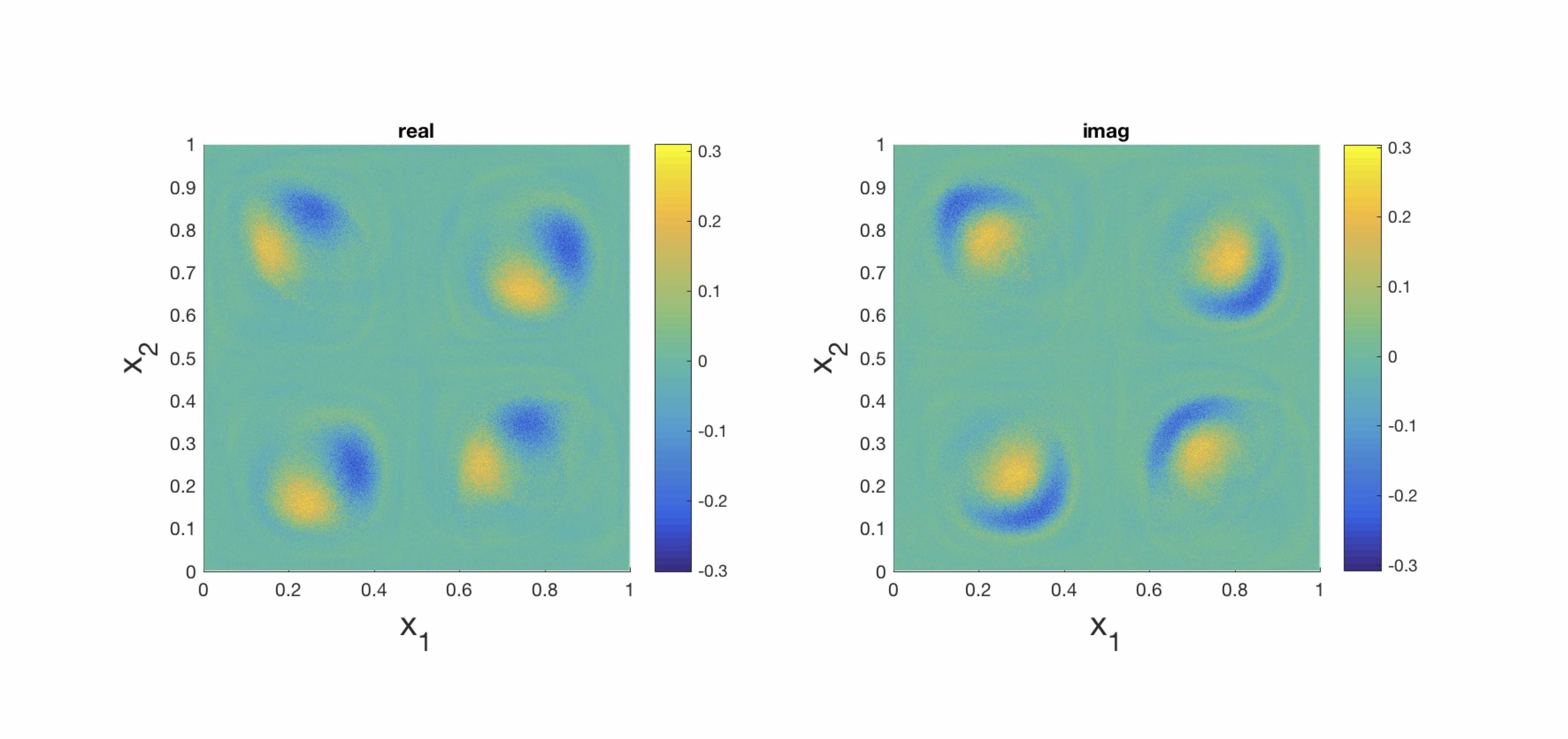} }  \subfigure[$x_3 = 0.25$]{\includegraphics[width=.49\textwidth]{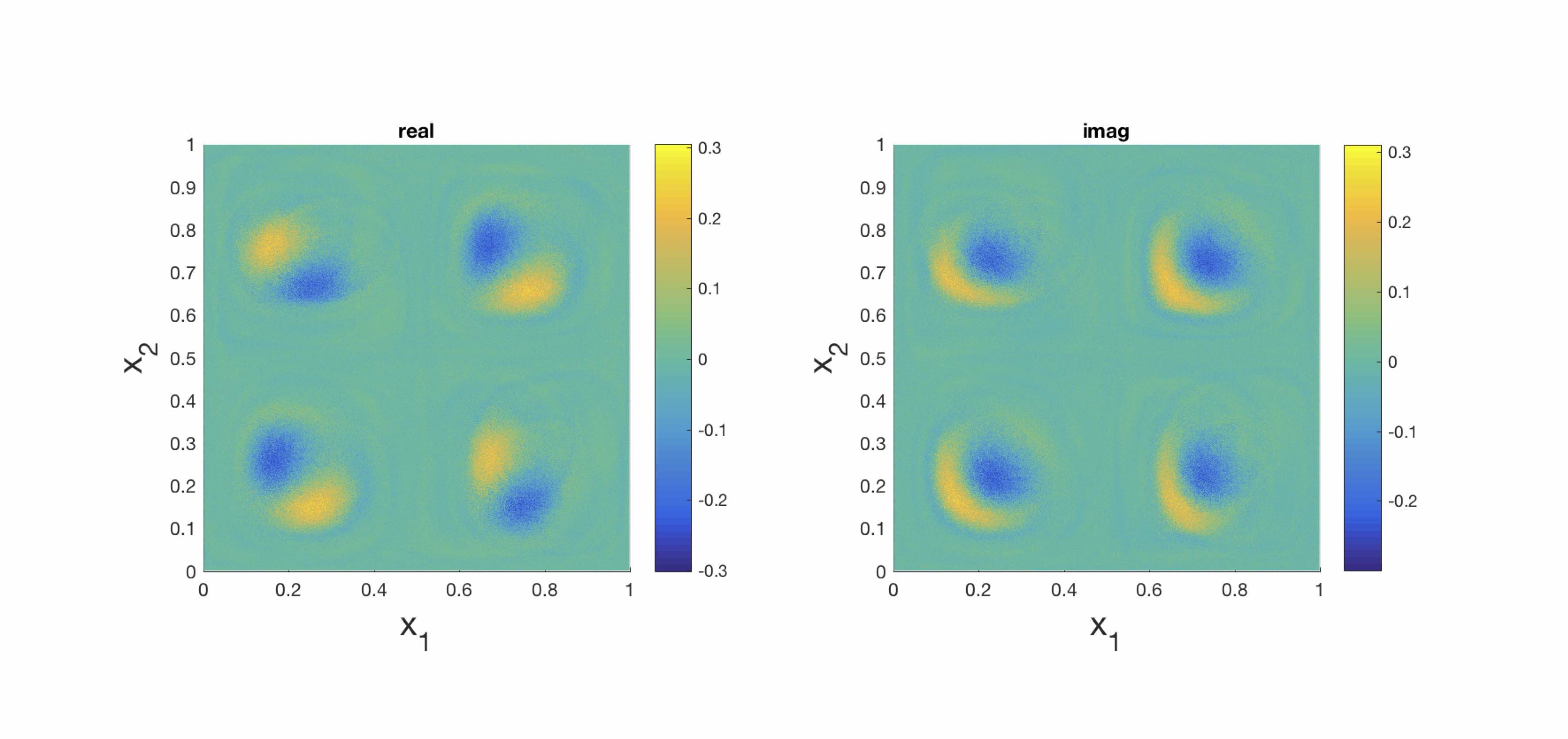} }  \\
		\subfigure[$x_3 = 0.5$]{\includegraphics[width=.49\textwidth]{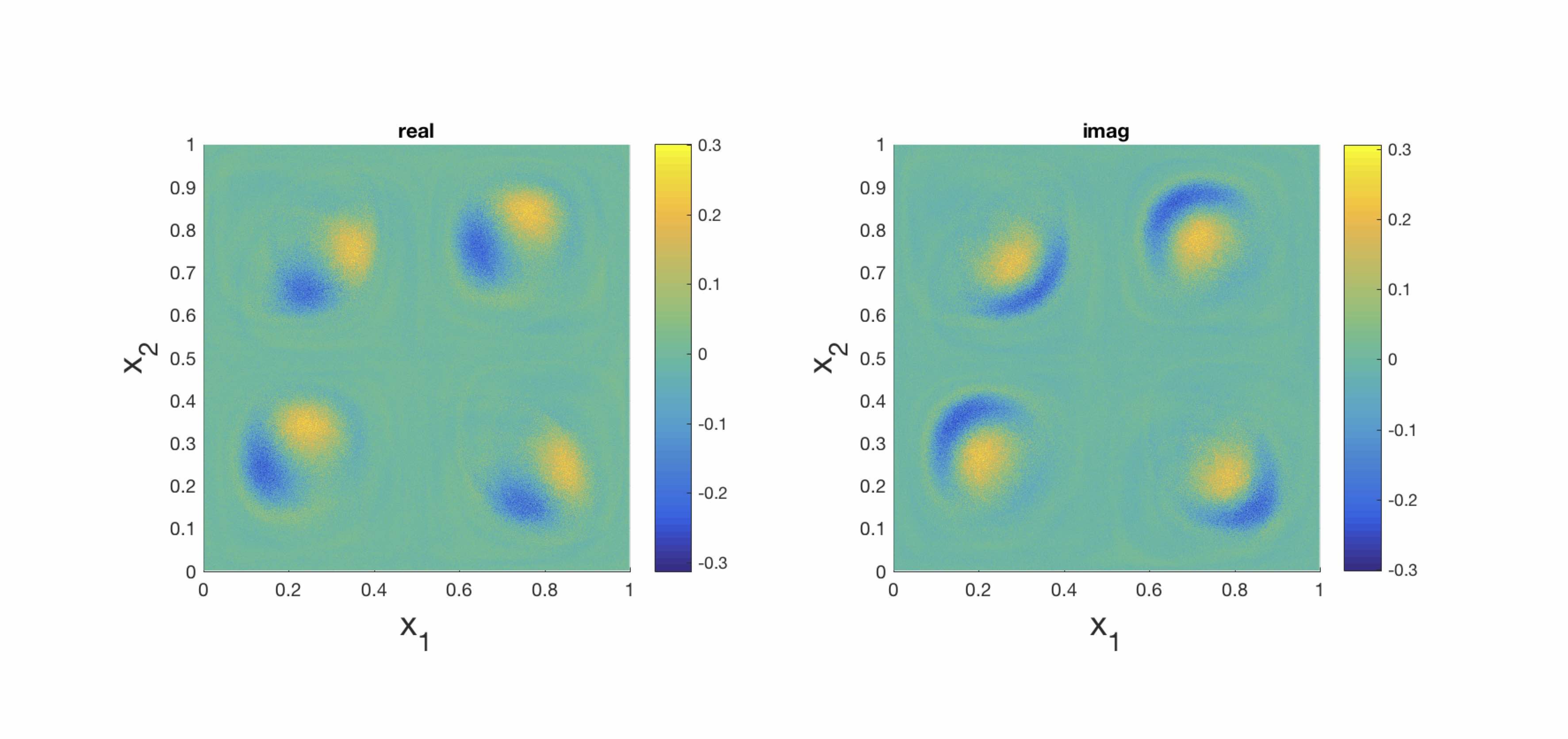} }  
		\subfigure[$x_3 = 0.75$]{\includegraphics[width=.49\textwidth]{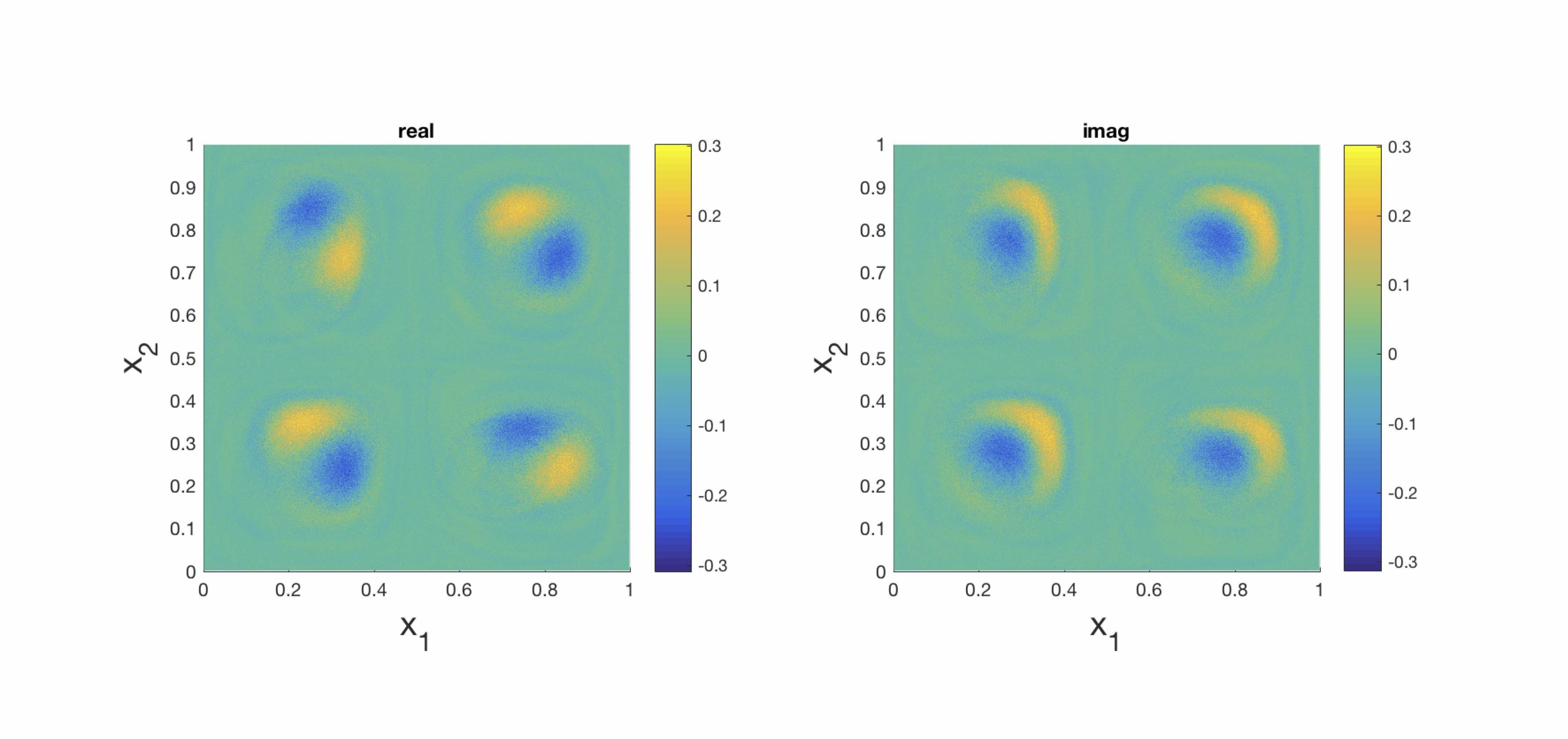} } 
	\end{center}
	\caption{Spectral projection of the observable \eqref{eq:obsgyre} onto the interval $D=[-11.1,11.4)$. } \label{fig:gyre3}
\end{figure}

\begin{figure}[b!]
	\includegraphics[width=.8\textwidth]{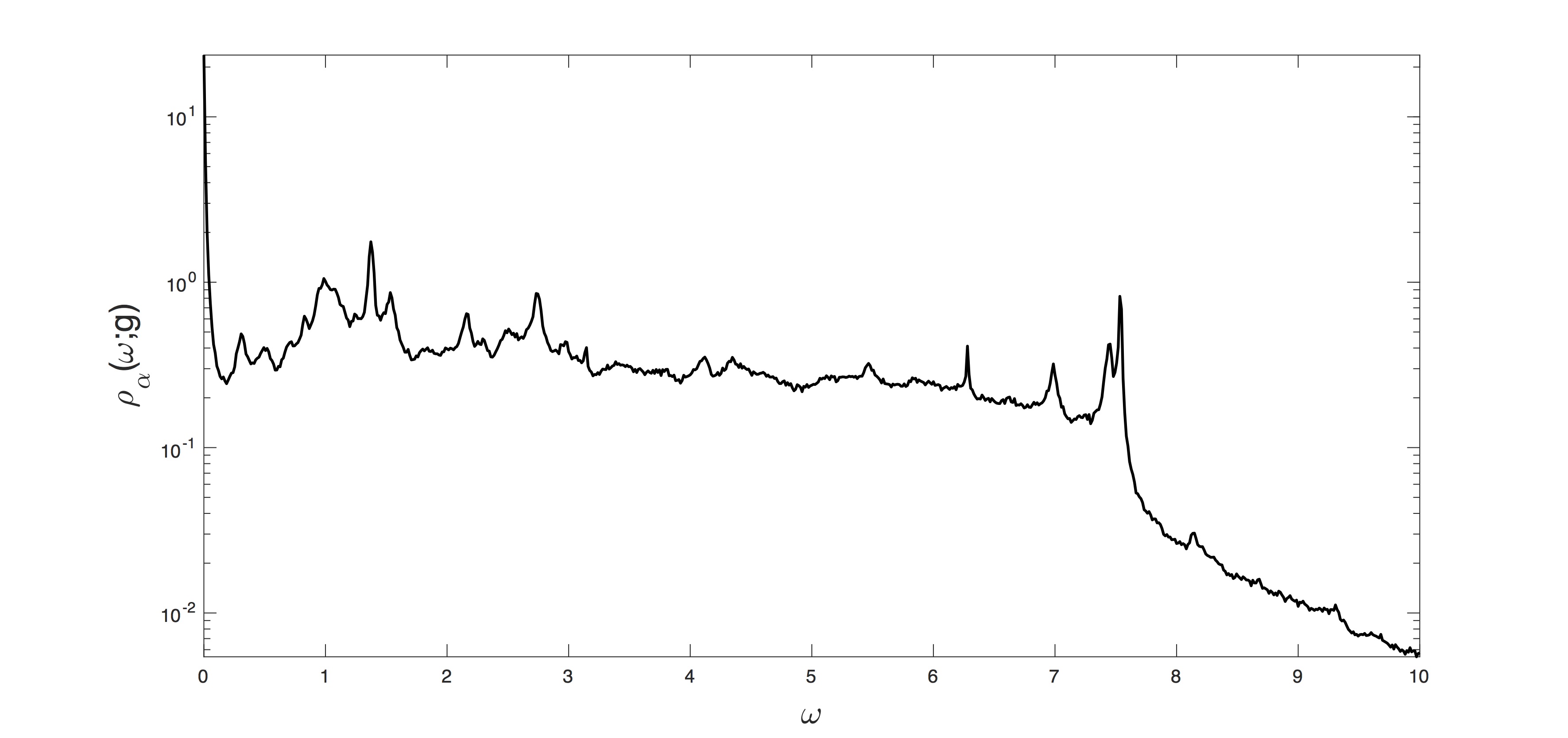} 
	\caption{The spectral density function ($\alpha=0.01$) for the observable \eqref{eq:obsABC}. } \label{fig:spectdensityABC}
\end{figure}

\begin{figure}[b!]
	\includegraphics[width=.8\textwidth]{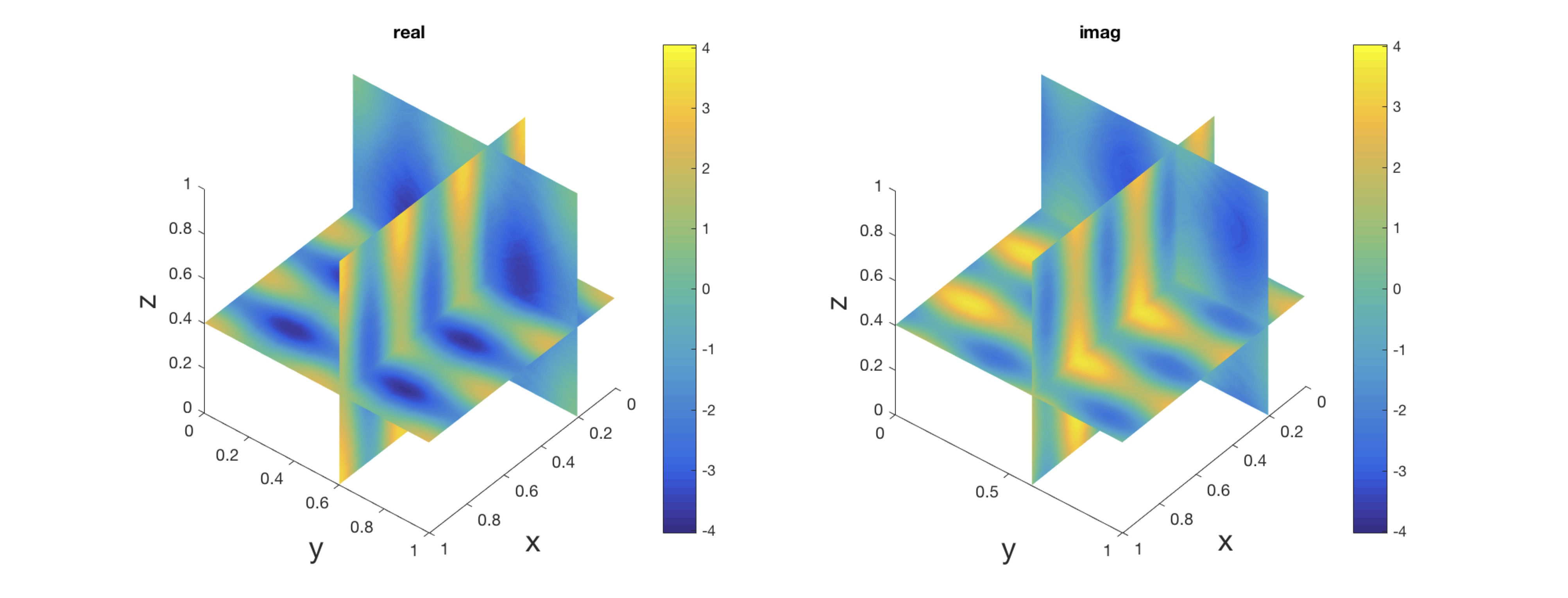} 
	\caption{Spectral projection of the observable \eqref{eq:obsABC} onto the interval $D=[-0.3,0.3)$.} \label{fig:projABC1}
	\includegraphics[width=.8\textwidth]{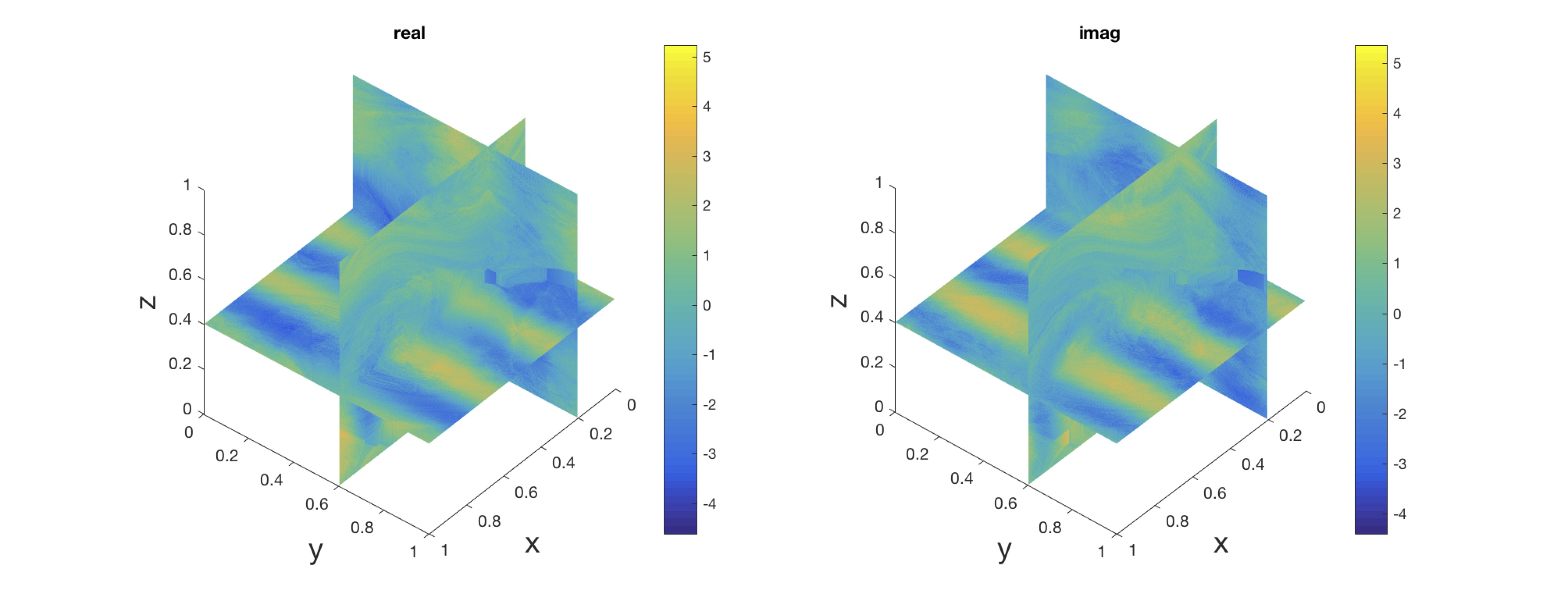} 
	\caption{Spectral projection of the observable \eqref{eq:obsABC} onto the interval $D=[7.36,7.56)$.} \label{fig:projABC2}
\end{figure}

\end{document}